\numberwithin{equation}{section}\swapnumbers
\newcommand{\cxymatrix}[1]{\vcenter{\xymatrix@=15pt{#1}}}
\newcommand{\xysubseteq}{\ar@{}[r]|{\displaystyle\subseteq}}
\newcommand{\xysubseteqdown}{\ar@{}[d]|{\rotatebox{90}{$\supseteq$}}}
\newtheorem{theorem}{Theorem}[section]
\newaliascnt{lemma}{theorem}
\newtheorem{lemma}[lemma]{Lemma}
\newaliascnt{corollary}{theorem}
\newtheorem{corollary}[corollary]{Corollary}
\newaliascnt{proposition}{theorem}
\newtheorem{proposition}[proposition]{Proposition}
\theoremstyle{definition}
\newaliascnt{definition}{theorem}
\newtheorem{definition}[definition]{Definition}
\newtheorem*{ack}{Acknowledgment}
\newaliascnt{remark}{theorem}
\newtheorem{remark}[remark]{Remark}
\newtheorem{remarks}[remark]{Remarks}
\newtheorem*{remark*}{Remark}
\newaliascnt{example}{theorem}
\newtheorem{example}[example]{Example}
\setlist[enumerate,2]{label=\textit{\alph*)},ref=\textit{\alph*})}
\setlist[enumerate,1]{label=\textit{\roman*)},ref=\textit{\roman*})}
\def\ssy.{homogeneous spherical datum}
\def\ssys.{homogeneous spherical data}
\def\wss.{weak spherical datum}
\def\wsss.{weak spherical data}
\def\Wss.{Weak spherical datum}
\def\Wsss.{Weak spherical data}
\newcommand{\fg}{\mathfrak{g}}
\newcommand{\fh}{\mathfrak{h}}
\newcommand{\fl}{\mathfrak{l}}
\newcommand{\fs}{\mathfrak{s}}
\newcommand{\fo}{\mathfrak{o}}
\newcommand{\fp}{\mathfrak{p}}
\newcommand{\ft}{\mathfrak{t}}
\newcommand{\fa}{\mathfrak{a}}
\newcommand{\gl}{\fg\fl}
\renewcommand{\sl}{\fs\fl}
\newcommand{\so}{\fs\fo}
\renewcommand{\sp}{\fs\fp}
\newcommand{\Sq}{{\overline S}}
\newcommand{\cS}{{\mathcal S}}
\newcommand{\CC}{\mathbb{C}}
\newcommand{\QQ}{\mathbb{Q}}
\newcommand{\RR}{\mathbb{R}}
\newcommand{\ZZ}{\mathbb{Z}}
\newcommand{\leer}{\varnothing}
\renewcommand{\rho}{\varrho}
\renewcommand{\phi}{\varphi}
\renewcommand{\epsilon}{\varepsilon}
\newcommand{\<}{\langle} 
\renewcommand{\>}{\rangle}
\newcommand{\into}{\hookrightarrow}
\renewcommand{\[}{\begin{equation}}
\renewcommand{\]}{\end{equation}}
\let\mod\undefined
\DeclareMathOperator{\mod}{/\!\!/}
\DeclareMathOperator{\rk}{rk}
\DeclareMathOperator{\Span}{span}
\DeclareMathOperator{\Hom}{Hom}
\DeclareMathOperator{\Spec}{Spec}
\DeclareMathOperator{\Ad}{Ad}
\DeclareMathOperator{\res}{res}
\newcommand{\Lq}{{\overline L}}
\newcommand{\G}{{\mathbf{G}}}
\newcommand{\cD}{\mathcal{D}}
\newcommand{\cU}{\mathcal{U}}
\newcommand{\cC}{\mathcal{C}}
\newcommand{\cZ}{\mathcal{Z}}
\newcommand{\GL}{\mathrm{GL}}
\newcommand{\PGL}{\mathrm{PGL}}
\newcommand{\PSp}{\mathrm{PSp}}
\newcommand{\PSO}{\mathrm{PSO}}
\newcommand{\SL}{\mathrm{SL}}
\newcommand{\SO}{\mathrm{SO}}
\newcommand{\Sp}{\mathrm{Sp}}
\newcommand{\sA}{\mathsf{A}}
\newcommand{\sB}{\mathsf{B}}
\newcommand{\sC}{\mathsf{C}}
\newcommand{\sD}{\mathsf{D}}
\newcommand{\sE}{\mathsf{E}}
\newcommand{\sF}{\mathsf{F}}
\newcommand{\sG}{\mathsf{G}}
\renewcommand{\a}{\alpha}
\renewcommand{\b}{\beta}
\renewcommand{\d}{\delta}
\newcommand\e{\varepsilon}
\newcommand\g{\gamma}
\newcommand{\av}{\a^\vee}
\newcommand{\avq}{\overline\a^\vee}
\newcommand{\bv}{\b^\vee}
\newcommand{\bvq}{\overline\b^\vee}
\newcommand{\dv}{\d^\vee}
\newcommand{\gv}{\g^\vee}
\newcommand{\Gv}{G^\vee}
\newcommand{\Lv}{L^\vee}
\newcommand{\Tv}{T^\vee}
\newcommand{\Sv}{S^\vee}
\newcommand{\Av}{A^\vee}
\newcommand{\Xiv}{\Xi^\vee}
\newcommand{\Xiqv}{\Xiq^\vee}
\newcommand{\chiv}{\chi^\vee}
\newcommand{\tauv}{\tau^\vee}
\newcommand{\Phiv}{\Phi^\vee}
\newcommand{\fgv}{\fg^\vee}
\newcommand{\flv}{\fl^\vee}
\newcommand{\Ga}{G\ass}
\newcommand{\Ta}{T\ass}
\newcommand{\La}{L\ass}
\newcommand{\LaS}{\La_\cS}
\newcommand{\LvS}{\Lv_\cS}
\newcommand{\Siv}{\Sigma^\vee}
\newcommand{\siv}{\sigma^\vee}
\newcommand{\Sa}{\Sigma\ass}
\newcommand{\s}{{}^s}
\newcommand{\sa}{\sigma\ass}
\newcommand{\ass}{^\wedge}
\newcommand{\ad}{_{\rm ad}}
\newcommand{\p}{^p}
\newcommand{\+}{+\ldots+}
\newcommand{\Xiq}{\Lambda}
\newcommand{\cSq}{\mathcal R}
\renewcommand{\tilde}{\widetilde}
\newcommand{\half}{\textstyle{\frac12}}
\renewcommand{\L}{{}^{\mathsf L}}
\newcommand{\tGv}{\L G}
\def\8#1/{\underline{#1}}
\def\9#1,#2&{\hspace{-10pt}\left\{\hspace{-4pt}\begin{array}{l}#1\\#2\end{array}\right.&}
\def\7#1,#2&{\begin{array}{l}#1\\#2\end{array}&}
\def\5#1/{\hbox to 20pt{\hss$#1$\hss}}
\def\scalar{0.7}
\def\op(#1,#2){\draw (#1,#2) circle (3pt);}
\def\xp(#1,#2){\filldraw (#1,#2) circle (3pt);}
\def\oo(#1,#2;#3){\draw (#1,#2) node[above]{$\ph#3$} circle (3pt);}
\def\yy(#1,#2;#3){\filldraw (#1,#2) node[below]{$\ph#3$} circle (3pt);}
\def\ph{\vrule width 0pt height 8pt}
\def\xx(#1,#2;#3){\filldraw (#1,#2) node[above]{$#3$} circle (3pt);}
\def\ddd(#1,#2){%
\draw($(#1,#2)+(0.08,0)$) -- ($(#1,#2)+(0.6,0)$);
\filldraw ($(#1,#2)+(0.8,0)$) circle (0.3pt);
\filldraw ($(#1,#2)+(1,0)$) circle (0.3pt);
\filldraw ($(#1,#2)+(1.2,0)$) circle (0.3pt);
\draw($(#1,#2)+(1.4,0)$) -- ($(#1,#2)+(1.9,0)$);}
\title[]{The dual group of a spherical variety}
\author[]{Friedrich Knop}
\author[]{Barbara Schalke}
\address[]{Dept. Mathematik\\FAU Erlangen-Nürnberg\\
Cauerstraße 11\\
D-91058 Erlangen}
\subjclass[2010]{17B22, 14L30, 11F70} \keywords{Spherical varieties,
  Langlands dual groups, root systems, algebraic groups, reductive
  groups}
\begin{document}

\begin{abstract}

  Let $X$ be a spherical variety for a connected reductive group
  $G$. Work of Gaitsgory-Nadler strongly suggests that the Langlands
  dual group $\Gv$ of $G$ has a subgroup whose Weyl group is the
  little Weyl group of $X$. Sakellaridis-Venkatesh defined a refined
  dual group $\Gv_X$ and verified in many cases that there exists an
  isogeny $\phi$ from $\Gv_X$ to $\Gv$. In this paper, we establish
  the existence of $\phi$ in full generality. Our approach is purely
  combinatorial and works (despite the title) for arbitrary
  $G$-varieties.

\end{abstract}

\maketitle

\section{Introduction}

Let $G$ be a connected reductive group defined over an algebraically
closed field $k$ of characteristic zero. It is known for a while that
the large scale geometry of a $G$-variety $X$ is controlled by a root
system $\Phi_X$ attached to it. For spherical varieties (i.e.,
varieties where a Borel subgroup of $G$ has an open orbit) this was
observed by Brion \cite{Brion}. For the general case see
\cite{KnopAB}.

Root systems classify reductive groups. So it is tempting to ask
whether the group $G_X$ with root system $\Phi_X$ has any geometric
significance. In particular, it would be desirable to have a natural
homomorphism from $G_X$ to $G$. Unfortunately, examples show that this
is not possible. The simplest is probably $X=G/H$ where $G=\Sp(4,\CC)$
and $H=\G_m\times\Sp(2,\CC)$. Here $\Phi_X$ consists of the
\emph{short} roots of $G$, hence does not correspond to a subgroup of
$G$.

For spherical varieties a solution to this problem was proposed by
Gaitsgory-Nadler in \cite{GaitsgoryNadler}: instead of finding a map
from $G_X$ to $G$, one should look at the Langlands dual groups and
try to find a homomorphism $\Gv_X\to\Gv$ between them. In fact, using
the Tannakian formalism, they were able to construct a subgroup
$\Gv_{X,GN}$ of $\Gv$ which seems to have the right properties but the
fact that the root system of $\Gv_{X,GN}$ is $\Phiv_X$ remains
conjectural.

Later, Sakellaridis and Venkatesh, \cite{SV}, refined the notion of
the dual group $\Gv_X$ and used a hypothetical homomorphism
$\phi:\Gv_X\to\Gv$ to formulate a Plancherel theorem for spherical
varieties over $p$-adic fields. The homomorphism $\phi$ was described
in terms of what they call \emph{associated roots}. They proved the
uniqueness of $\phi$ in general and its existence in many cases.

The purpose of the present paper is to prove the existence of
$\phi:\Gv_X\to\Gv$ (in the sense of \cite{SV}) in full generality
(\cref{thm:main}). Our approach is completely combinatorial. More
precisely, we use a classification of rank-$1$ spherical varieties due
to Akhiezer \cite{Akhiezer} and, to a certain extent, the
classification of the rank-$2$ varieties by Wasserman \cite{Wasserman}
(verified by Bravi \cite{Bravi}).

Towards proving the existence of $\phi$, we show that the associated
roots of Sakellaridis-Venkatesh are the simple roots of a subgroup
$\Ga_X\subseteq\Gv$ (the associated group of $X$, \cref{thm:assexist})
and that $\phi$ should map $\Gv_X$ to $\Ga_X$. Observe that, by
construction, $\Ga_X$ is of maximal rank, i.e., it contains the
maximal torus $\Tv$ of $\Gv$. Subgroups of this type have been
classified by Borel-de Siebenthal in \cite{BdS}.

Now we show that $\phi:\Gv_X\to\Ga_X$ can be obtained by a process
which we call \emph{folding}. This is a slight generalization of the
usual folding by a graph automorphism.

It is curious that Ressayre, \cite{Res}, arrived at the same folding
procedure in his classification of minimal rank spherical
varieties. This means, in particular, that the homogeneous variety
$\Ga_X/\phi(\Gv_X)$ is of an extremely special type, namely it is
affine, spherical, and of minimal rank (\cref{cor:affsphminrk}).

Next we give, in the spirit of the Langlands philosophy, a
reformulation of the main results of \cite{KnopAB} (on moment maps)
and \cite{KnopHC} (on invariant differential operators) in terms of
the dual group (Section \S\ref{sec:moment}).

The theory of Sakellaridis-Venkatesh also calls for a particular
homomorphism $\SL(2)\to\Gv$ whose image centralizes $\phi(\Gv_X)$ and
whose existence we prove as well (\cref{cor:arthur}). To this end, we
determine the centralizer of $\phi(\Gv_X)$ in $\Gv$. More precisely,
we show (\cref{thm:centralizer}) that $\phi(\Gv_X)$ is centralized by
a finite index subgroup $\La_X$ of a fixed point group $(\Lv_X)^{W_X}$
where $\Lv_X\subseteq\Gv$ is a Levi subgroup and $W_X$ is the Weyl
group of $\Gv_X$ acting on $\Lv_X$ in a not quite obvious way. Under
some non-degeneracy condition we show (\cref{thm:fullcentralizer})
that $\La_X$ is even the entire centralizer of $\phi(\Gv_X)$ in $\Gv$.

All in all, we obtain the following poset of subgroups of $\Gv$:
\begin{center}
  $\cxymatrix{ &&\5\Gv/\ar@{-}[dl]\ar@{-}[dr]\\
    &\5\Ga_X/\ar@{-}[dr]\ar@{-}[dl]&&\5\Lv_X/\ar@{-}[dr]\ar@{-}[dl]\\
    \5\Gv_X/&&\5\Tv/&&\5\La_X/\\}$
\end{center}
In \cref{cor:GZC} we see that
\begin{equation}
  \Gv_X\times^{Z(\Gv_X)}\La_X\into\Gv
\end{equation}
is an injective homomorphism where $Z(\Gv_X)\subseteq\Gv_X$ is the
center.

To exemplify our results, we listed in \cref{tab:Kraemer} the Lie
algebras of all relevant subgroups for $X=G/H$ in Krämer's list
\cite{Kraemer}, i.e., for $G$ simple and $H$ reductive, spherical. One
case is particularly curious since it involves all six exceptional
groups (counting $\sD_4$):
\begin{equation}
  \fg=\fgv=\sE_8,\ \fh=\sE_7+\sl(2),\ \fg\ass_X=\sE_6+\ft^2,\
  \fgv_X=\sF_4,\ \flv_X=\sD_4+\ft^4,\ \fl\ass_X=\sG_2.
\end{equation}

In Section \S\ref{sec:L-groups} we study the behavior of the dual
group with respect to a (Galois) group $E$ of automorphisms giving
hopefully some hints on how to define an $L$-group $\L G_X$ of $X$. In
particular, we found that the action of $E$ on $\Gv_X$ will, in
general, not be the obvious one (i.e., the one induced by diagram
automorphisms).

In the final section, we discuss functoriality properties with respect
to various transformations of \wsss., like parabolic induction and
localization. We also note that the group $\Gv$, its subgroup $\Ga_X$,
the dual group $\Gv_X$, and the homomorphism $\phi$ are defined over
$\ZZ$. Moreover the centralizer is, in general, defined over
$\ZZ[\half]$.

The setting of the paper is actually more general than described
above. Instead of directly working with spherical varieties, we only
study them through an intermediate combinatorial structure which we
call a \emph{\wss.}. This structure is a weakening (whence the name)
of the \emph{\ssy.} of \cite{Luna} which is used to classify
homogeneous spherical varieties (by Bravi-Pezzini
\cite{BraviPezzini}). Additionally, one can associate a \wss. to
\emph{any} $G$-variety (\cref{prop:non-spherical}) which widens the
scope of our theory to this generality.

\begin{remark*}

  Parts of this paper are based on the second author's PhD-thesis
  \cite{Schalke}.

\end{remark*}

\begin{ack}

  Thanks are due to Yiannis Sakellaridis for helpful comments on
  $L$-groups and to Kay Paulus, Bart Van Steirteghem, and the referee
  for their care in reading preliminary versions of this article.

\end{ack}

\section{Notation}

If $\Xi$ is a lattice we denote its dual $\Hom(\Xi,\ZZ)$ by
$\Xiv$. The pairing between $\Xi$ and $\Xiv$ will be denoted by
$\<\cdot\mid\cdot\>$.

In the following let $(\Xiq,\Phi,\Xiqv,\Phiv)$ be a finite root datum
and let $S\subseteq\Phi$ be a fixed basis, i.e., a set of simple
roots. The quadruple $\cSq:=(\Xiq,S,\Xiqv,\Sv)$ will then be called a
\emph{based root datum}.

Let $\Phi^+\subseteq\Phi$ be the set of positive roots with respect to
$S$. The Weyl group is denoted by $W$. We also fix a $W$-invariant
scalar product $(\cdot,\cdot)$ on $\Xiq\otimes\RR$. It will only serve
auxiliary purposes and will not be considered part of the structure.

For any algebraic group $H$ let $\Xi(H)$ be its character group. The
Lie algebra of a group $G$, $H$, $L$ etc. will be denoted by the
corresponding fraktur letter $\fg$, $\fh$, $\fl$, etc.

Let $G$ be a connected reductive group defined over an algebraically
closed field $k$ of characteristic $0$ whose based root datum is
$\cSq$ (with respect to a Borel subgroup $B\subseteq G$ and a maximal
torus $T\subseteq B$). The $1$-dimensional unipotent root subgroup
corresponding to the root $\a\in\Phi$ will be denoted by $G_\a$. On
the other hand, $G(\a)$ is the semisimple rank-$1$-group generated by
$G_\a$ and $G_{-\a}$.

Since the dual based datum $(\Xiqv,\Sv,\Xiq,S)$ is a based root datum
as well, it is the root datum of a unique connected reductive group
$\Gv$, the \emph{dual group} of $G$. In this paper we take $\Gv$ to be
defined over $\CC$ even though most constructions work over $\ZZ$ (see
\cref{thm:integral}). This means that $G$ and $\Gv$ are not
necessarily defined over the same field.

A choice of generators $e_\a\in\fg_\a$, $\a\in S$ is called a
\emph{pinning}. We fix a pinning for $\Gv$.

\section{Some basic facts concerning root systems}

In this section we collect a couple of well-known criteria for root
(sub)systems.

\begin{proposition}\label{prop:cryst}

  Let $\Sigma$ be a subset of an Euclidean vector space $V$. Assume
  that $\Sigma$ is contained in some open half-space and that
  $\<\sigma\mid\tauv\>=\frac{2(\sigma,\tau)}{(\tau,\tau)}\in\ZZ_{\le0}$
  for all $\sigma\ne\tau\in\Sigma$. Then $\Sigma$ is the basis of a
  finite root system.

\end{proposition}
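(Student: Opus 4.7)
The plan is to prove the proposition in two steps: first, show that $\Sigma$ is linearly independent, and then deduce that the Weyl group generated by the reflections in the elements of $\Sigma$ is finite, so that its orbit of $\Sigma$ is a finite crystallographic root system with base $\Sigma$.

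\emph{Step 1 (linear independence).} I would use the standard separation-of-signs argument. Given a relation $\sum_{\sigma \in \Sigma} c_\sigma \sigma = 0$, split the support according to the sign of $c_\sigma$ and put $v := \sum_{c_\sigma > 0} c_\sigma \sigma = \sum_{c_\tau < 0}(-c_\tau)\tau$. Pairing the two expressions, $(v,v)$ is a sum of terms $c_\sigma(-c_\tau)(\sigma,\tau)$ in which $\sigma \neq \tau$ always holds (opposite signs), so $(\sigma,\tau) \le 0$ by hypothesis while $c_\sigma(-c_\tau) > 0$; hence $(v,v) \le 0$, forcing $v = 0$. Applying any linear functional $\ell$ that is strictly positive on the open half-space containing $\Sigma$ to $v=0$ then annihilates all the positive, and similarly all the negative, coefficients.

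\emph{Step 2 (finite crystallographic root system).} Linear independence implies that the Gram matrix $G = \bigl((\sigma,\tau)\bigr)_{\sigma,\tau \in \Sigma}$ is positive definite, being the restriction of the ambient scalar product to the linearly independent family $\Sigma$. The Cartan matrix $A_{\sigma\tau} := \<\sigma\mid\tauv\>$ is a generalized Cartan matrix: the diagonal entries equal $2$, the off-diagonal entries lie in $\ZZ_{\le0}$ by hypothesis, and $A_{\sigma\tau} = 0 \Leftrightarrow (\sigma,\tau) = 0 \Leftrightarrow A_{\tau\sigma} = 0$. It is symmetrizable, its symmetrization being $G$ up to scaling the $\tau$-column by the positive factor $(\tau,\tau)/2$. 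Since $G$ is positive definite, $A$ is of finite type by the classification of generalized Cartan matrices; equivalently, the canonical bilinear form of the Coxeter group $W$ generated by the reflections $s_\sigma$ is positive definite, so $W$ is finite. Then $\Phi := W \cdot \Sigma$ is a finite crystallographic root system; the assumption that $\Sigma$ lies in an open half-space plus the non-positivity of the Cartan integers is exactly what is needed to identify $\Sigma$ with the set of simple roots associated to that half-space.

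\emph{Expected obstacle.} There is no substantive difficulty; the real content is concentrated in Step 1. For Step 2 I would simply cite the classification of Cartan matrices (or equivalently the Coxeter-group theorem that a reflection representation with positive-definite canonical form generates a finite group), rather than reproving it.
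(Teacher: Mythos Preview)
Your proposal is correct and follows essentially the same route as the paper: linear independence via the separation-of-signs argument (which the paper abbreviates to a citation of Bourbaki, Chap.~V, \S3.5, Lemme~3(ii)), followed by the observation that the resulting Cartan matrix is symmetrizable with positive-definite symmetrization and hence of finite type (which the paper phrases as a citation of Kac, Prop.~4.9). The only difference is that you spell out the Bourbaki argument explicitly rather than citing it.
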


\begin{proof}

  By Bourbaki (Chap. V, \S3.5, Lemme 3(ii)), the two conditions imply
  that $\Sigma$ is linearly independent. Without loss of generality we
  may assume that $\Sigma$ is a basis of $V$.

  Now consider the Cartan matrix
  $C_{\tau\sigma}:=\<\sigma\mid\tauv\>$.  It is symmetrizable and its
  symmetrization is positive definite. It follows from
  \cite{Kac}*{Prop.~4.9} that $\Sigma$ is a basis of a finite root
  system inside $\ZZ\Sigma\subseteq V$.
\end{proof}

Recall that a root subsystem $\Psi\subseteq\Phi$ is additively closed
if $\Psi=\Phi\cap\ZZ\Psi$.

\begin{lemma}\label{lemma:addclosed}

  Let $\Phi$ be a finite root system, $\Psi\subseteq\Phi$ a root
  subsystem and $\Sigma\subseteq\Psi$ a basis. Then $\Psi$ is
  additively closed in $\Phi$ if and an only if any two elements of
  $\Sigma$ generate an additively closed root subsystem.

\end{lemma}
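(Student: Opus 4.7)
The plan is to prove the two directions separately.

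The forward implication is immediate from rewriting intersections: if $\Psi = \Phi \cap \ZZ\Psi$ and $\sigma,\tau \in \Sigma$, then since $\ZZ\sigma+\ZZ\tau \subseteq \ZZ\Psi$ we have
\[
  \Psi \cap (\ZZ\sigma+\ZZ\tau) = (\Phi \cap \ZZ\Psi) \cap (\ZZ\sigma+\ZZ\tau) = \Phi \cap (\ZZ\sigma+\ZZ\tau),
\]
exhibiting the rank-two subsystem with basis $\{\sigma,\tau\}$ as additively closed in $\Phi$.

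For the converse I would show $\Phi \cap \ZZ\Sigma \subseteq \Psi$ by induction on the size $|\Sigma_\alpha|$ of the $\Sigma$-support of $\alpha = \sum n_\sigma \sigma$, with a secondary lexicographic induction on $\sum_\sigma |n_\sigma|$. The base case $|\Sigma_\alpha| \le 2$ is exactly the pair hypothesis applied to the pair of basis elements whose $\ZZ$-span contains $\alpha$. For the inductive step $|\Sigma_\alpha| \ge 3$, the condition $(\alpha,\alpha) > 0$ produces some $\sigma_0 \in \Sigma_\alpha$ with $n_{\sigma_0} \<\alpha \mid \sigma_0^\vee\> > 0$; the standard root-string argument then yields $\alpha \mp \sigma_0 \in \Phi \cap \ZZ\Sigma$ with strictly smaller lex invariant $(|\Sigma_\alpha|, \sum|n_\sigma|)$, and the induction hypothesis places this element in $\Psi$.

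The heart of the argument is the \emph{lifting} step: given $\beta := \alpha - \sigma_0 \in \Psi$ together with $\beta + \sigma_0 = \alpha \in \Phi$, conclude $\alpha \in \Psi$. I plan to resolve this by a nested induction on the height $h(\beta)$ of $\beta$ in $\Psi$ (relative to $\Sigma$). For $h(\beta) = 1$, $\beta$ equals $\pm$ a basis element and $\alpha$ then lies in the $\ZZ$-span of just two elements of $\Sigma$, so the pair hypothesis finishes the case directly. For larger heights, decompose $\beta = \beta' + \tau$ with $\tau \in \Sigma$ and $\beta' \in \Psi$ of strictly smaller height, and use the $\sigma_0$- and $\tau$-root strings in $\Phi$ together with the induction hypothesis on $(\sigma_0, \beta')$ or $(\sigma_0, \tau)$ to reduce back to a rank-two question that is handled by the pair hypothesis. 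The substantive obstacle throughout is that the $\sigma_0$-string through $\beta$ in $\Phi$ can a priori strictly extend the corresponding string in $\Psi$; excluding such extensions by localizing to a rank-two sub-root system spanned by two elements of $\Sigma$ is precisely where the pair hypothesis does its essential work, and organizing the nested inductions so that it can always be invoked at a genuine rank-two step is the delicate bookkeeping at the core of the proof.
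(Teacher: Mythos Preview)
Your forward implication and the outer reduction to the ``lifting step'' (given $\beta\in\Psi$, $\sigma_0\in\Sigma$ with $\beta+\sigma_0\in\Phi$, show $\beta+\sigma_0\in\Psi$) are fine and parallel the paper's reduction to the $N=2$ case. The gap is in the lifting step itself: your nested induction on $h(\beta)$ does not terminate as described. After writing $\beta=\beta'+\tau$ with $\tau\in\Sigma$ and $h(\beta')<h(\beta)$, the two moves you list both stall. If $\beta'+\sigma_0\in\Phi$, the height hypothesis gives $\beta'+\sigma_0\in\Psi$, but then $\alpha=(\beta'+\sigma_0)+\tau$ is again a lifting problem with $h(\beta'+\sigma_0)=h(\beta)$, so no progress. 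If instead $\tau+\sigma_0\in\Phi$, the pair hypothesis gives $\tau+\sigma_0\in\Psi$, but $\alpha=\beta'+(\tau+\sigma_0)$ is no longer of lifting shape since $\tau+\sigma_0\notin\Sigma$. Concretely, in $\sG_2$ with $\Sigma=\{\a_1,\a_2\}$ take $\beta=2\a_1+\a_2$ and $\sigma_0=\a_1$: the only admissible decomposition is $\beta'=\a_1+\a_2$, $\tau=\a_1$, and the first move reproduces exactly the original pair. So the ``delicate bookkeeping'' you flag is not bookkeeping; a genuinely new idea is needed.

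The paper supplies it via a single Weyl-group conjugation, bypassing any height induction. For arbitrary $\psi_1,\psi_2\in\Psi$ with $\psi_1+\psi_2\in\Phi$, set $\Psi':=\Span_\QQ(\psi_1,\psi_2)\cap\Psi$. This is a $\QQ$-saturated rank-$\le2$ subsystem of $\Psi$, so any basis of $\Psi'$ extends to a basis $\Sigma'$ of $\Psi$; choosing $w\in W(\Psi)\subseteq W(\Phi)$ with $w\Sigma'=\Sigma$, the image $w\Psi'$ is spanned by two elements of $\Sigma$ and is additively closed in $\Phi$ by the pair hypothesis, hence so is $\Psi'=w^{-1}(w\Psi')$, and $\psi_1+\psi_2\in\Psi'\subseteq\Psi$. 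This Weyl-group move is the ingredient your sketch is missing.
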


\begin{proof}

  We have to show that if $\phi=\sum_{i=1}^N\psi_i\in\Phi$ with
  $\psi_i\in\Psi$ then $\phi\in\Psi$. First we claim that it suffices
  to consider the case $N=2$. Indeed, from
  $0<(\phi,\phi)=\sum_{i=1}^N(\phi,\psi_i)$ we see that there is an
  $i$ with $(\phi,\psi_i)>0$. Let $\phi_0:=\phi-\psi_i$. Then either
  $\phi_0=0$, in which case $\phi=\psi_i\in\Psi$, or
  $\phi_0\in\Phi$. Since then $\phi_0=\sum_{j\ne i}\psi_j\in\Psi$, by
  induction on $N$ we see that $\phi=\phi_0+\psi_i\in\Psi$ by the case
  $N=2$.

  So assume $\phi=\psi_1+\psi_2\in\Phi$ with $\psi_i\in\Psi$. Then
  $\Psi':=\Span_\QQ(\psi_1,\psi_2)\cap\Psi$ is an additively closed
  subsystem of $\Psi$. Hence every basis $\psi_1',\psi_2'\in\Psi'$ can
  be extended to a basis $\Sigma'\subseteq\Psi$ (just choose a linear
  function $\ell$ with $0<\ell(\psi_i')<1$ for $i=1,2$ and
  $|\ell(\psi)|>1$ for all $\psi\in\Psi\setminus\Psi'$ and consider
  the indecomposable elements of $\Psi\cap\{\ell>0\}$). Let $w$ be the
  element of the Weyl group of $\Psi$ such that
  $w\Sigma'=\Sigma$. Since $w\Psi'$ is additively closed in $\Phi$ by
  assumption, so is $\Psi'$. This implies $\phi\in\Psi$.
\end{proof}

\begin{lemma}\label{lemma:addclosed3}

  A subset $\Sigma\subseteq\Phi^+$ is the basis of an additively
  closed root subsystem if and only if $\sigma-\tau\not\in\Phi^+$ for
  all $\sigma,\tau\in\Sigma$.

\end{lemma}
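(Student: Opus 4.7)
The forward direction is immediate: assuming $\Sigma$ is the basis of an additively closed subsystem $\Psi=\Phi\cap\ZZ\Sigma$, any $\sigma-\tau\in\Phi^+$ (for distinct $\sigma,\tau\in\Sigma$) would lie in $\Phi\cap\ZZ\Sigma=\Psi$; but each root of $\Psi$ is an all-nonnegative or all-nonpositive $\ZZ$-combination of the basis $\Sigma$, contradicting the mixed signs of the coefficients $1,-1$ in $\sigma-\tau$.

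For the converse, assume $\sigma-\tau\notin\Phi^+$ for all $\sigma,\tau\in\Sigma$; swapping roles also yields $\tau-\sigma\notin\Phi^+$, so $\sigma-\tau\notin\Phi$ whenever $\sigma\ne\tau$ in $\Sigma$. The plan is first to apply \cref{prop:cryst} to produce a root system, and then to reduce additive closure to rank $2$ via \cref{lemma:addclosed}. For the first step, $\Sigma\subseteq\Phi^+$ automatically lies in an open half-space, and the standard root-string argument (if $(\sigma,\tau)>0$ for distinct roots then $\sigma-\tau\in\Phi$) combined with the hypothesis gives $\langle\sigma\mid\tau^\vee\rangle\in\ZZ_{\le 0}$, so \cref{prop:cryst} produces a finite root system $\Psi\subseteq\ZZ\Sigma$ with basis $\Sigma$.

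For the second step, \cref{lemma:addclosed} reduces additive closure to the following rank-$2$ question: for every pair $\sigma_1,\sigma_2\in\Sigma$, I must show that $\Phi\cap\ZZ\{\sigma_1,\sigma_2\}$ coincides with the subsystem generated by $\sigma_1,\sigma_2$. Choosing the basis $\{\tau_1,\tau_2\}$ of $\Phi_{12}:=\Phi\cap\ZZ\{\sigma_1,\sigma_2\}$ inside $\Phi^+$, both $\sigma_1$ and $\sigma_2$ are $\ZZ_{\ge 0}$-combinations of $\tau_1,\tau_2$, and the favorable case $\{\sigma_1,\sigma_2\}=\{\tau_1,\tau_2\}$ is immediate. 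For the remaining possibilities I would invoke the rank-$2$ classification ($A_1\times A_1$, $A_2$, $B_2$, $G_2$) and enumerate ordered pairs of positive roots: the condition $\sigma_1-\sigma_2\notin\Phi$ is so restrictive that in each type only a handful of configurations survive, and for each of them one verifies directly that $\Phi\cap\ZZ\{\sigma_1,\sigma_2\}$ is exactly the subsystem generated by $\sigma_1,\sigma_2$---the notable non-trivial survivors being the orthogonal long-root pair in $B_2$ and a few analogous pairs in $G_2$.

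The main obstacle is this rank-$2$ analysis: the classification of rank-$2$ root systems appears unavoidable, but is tractable because the exclusion of $\sigma_1-\sigma_2$ from $\Phi$ drastically limits the admissible pairs.
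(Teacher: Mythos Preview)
Your proof is correct and follows the same overall strategy as the paper: the forward direction via basis considerations, the converse via \cref{prop:cryst} to produce a root system with basis $\Sigma$, and then \cref{lemma:addclosed} to reduce additive closure to rank~$2$.

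The only difference is in how the rank-$2$ step is dispatched. Where you propose to enumerate, within each rank-$2$ type, all pairs $(\sigma_1,\sigma_2)$ of positive roots surviving the constraint $\sigma_1-\sigma_2\notin\Phi$ and verify additive closure for each, the paper argues in one stroke: setting $\Phi':=\Span_\QQ(\sigma,\tau)\cap\Phi$ and $\Psi':=\Span_\QQ(\sigma,\tau)\cap\Psi$, the subsystem $\Psi'$ can fail to be additively closed in $\Phi'$ only when $\Phi'$ is of type $\sB_2$ or $\sG_2$ and $\Psi'$ is its set of short roots; but in exactly those cases the two basis elements of $\Psi'$ differ by a long root of $\Phi'$, contradicting the hypothesis. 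This replaces your case enumeration by a single structural observation and sidesteps the need to inspect the ``surviving pairs'' individually.
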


\begin{proof}

  Clearly, the condition implies $\tau-\sigma\not\in\Phi^+$ and
  therefore $\sigma-\tau\not\in\Phi$ for all $\sigma\ne\tau\in\Sigma$.
  From this we infer $\<\sigma\mid\tauv\>\in\ZZ_{\le0}$. Also the
  half-space condition of \cref{prop:cryst} is satisfied since
  $\Sigma\subseteq\Phi^+$. Thus, $\Sigma$ is a basis of some root
  system $\Psi\subseteq\Phi$. For $\sigma,\tau\in\Sigma$ let
  $\Phi':=\Span_\QQ(\sigma,\tau)\cap\Phi$ and
  $\Psi':=\Span_\QQ(\sigma,\tau)\cap\Psi$. If $\Psi'$ were not
  additively closed in $\Phi'$, then $\Phi'$ would be of type $\sB_2$
  or $\sG_2$ and $\Psi'$ would be its subset of short roots. But then
  $\sigma-\tau\in\Phi'\subseteq\Phi$, contrary to our assumption. Now
  \cref{lemma:addclosed} implies that $\Psi$ is additively closed in
  $\Phi$.
\end{proof}

\section{Folding root systems}

The process of folding a based root system by a graph automorphism is
well-known (see e.g. \cite{Springer}*{\S10}). We are going to need a
slight generalization.

For this we start with the based root datum $(\Xiq,S,\Xiqv,\Sv)$ of
the connected group $G$. Let $\alpha\mapsto\s\a$ an involution on
$S$. With $\s\av:=\s(\av):=(\s\a)^\vee$, we get also an involution of
$\Sv$.

\begin{definition}\label{def:folding}

  The involution $s$ is called a \emph{folding} if for all
  $\a,\b\in S$:

  \begin{enumerate}

  \item\label{it:fold1} $\<\a\mid\s\av\>=0$ whenever $\a\ne\s\a$ and

  \item\label{it:fold2} $\<\a-\s\a\mid\bv+\s\bv\>=0$.

  \end{enumerate}

\end{definition}

Observe, that we do not assume $s$ to be an automorphism of the Dynkin
diagram $\cD$ of $G$. This would be equivalent to
\begin{equation}
  \<\s\a\mid\s\bv\>=\<\a\mid\bv\>
\end{equation}
which implies property \ref{it:fold2} of a folding.

\begin{example}

  Not all foldings are automorphisms, though. Let $G$ be of type
  $\sB_3$ with roots $\a_1,\a_2,\a_3$. Let $\s\a_1=\a_3$ and
  $\s\a_2=\a_2$. Then $s$ is a folding but not a diagram
  automorphism. Indeed, the only case which has to be verified for
  \ref{it:fold2} is $\a=\a_1$ and $\b=\a_2$. Then
  \begin{equation}
    \<\a-\s\a\mid\bv+\s\bv\>=\<\a_1-\a_3\mid 2\av_2\>=0
  \end{equation}
  shows that $s$ is a folding.

\end{example}

We show that this example is essentially the only folding that is not
a diagram automorphism.

\begin{lemma}\label{lemma:main}

  \begin{enumerate}

  \item\label{it:mainlemma1} If $\a\ne \s\a\in S$ then $\a$ is not
    connected to $\s\a$ in $\cD$.

  \item\label{it:mainlemma2} For $\a\ne\b\in\cD$ assume that the types
    of the edges between $\a,\b$ and between $\s\a,\s\b$ are
    different, i.e.,
    $ \<\a\mid\bv\>\ne\<\s\a\mid\s\bv\>\text{ or }
    \<\b\mid\av\>\ne\<\s\b\mid\s\av\>$. Then
    $S_0:=\{\a,\b,\s\a,\s\b\}$ spans a subdiagram of $\cD$ which is of
    type $\sB_3$.

  \end{enumerate}
\end{lemma}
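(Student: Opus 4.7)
The plan is to derive (i) directly from \cref{def:folding} and to handle (ii) by case analysis on $n:=|S_0|\in\{2,3,4\}$.

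For (i): when $\a\ne\s\a$, condition \ref{it:fold1} of \cref{def:folding} says $\<\a\mid\s\av\>=0$, which means $\a$ and $\s\a$ are not joined in $\cD$.

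For (ii), I would first expand condition \ref{it:fold2} applied to the pair $(\a,\b)$ and to the pair $(\b,\a)$, getting
\begin{equation*}
\<\a\mid\bv\>+\<\a\mid\s\bv\>=\<\s\a\mid\bv\>+\<\s\a\mid\s\bv\>
\end{equation*}
together with the analogous identity with $\a,\b$ interchanged. The case $n=2$ means either both $\a,\b$ are $s$-fixed (so both identities become $0=0$ and both alternatives of the hypothesis read as the false $x\ne x$), or $s$ swaps $\a$ and $\b$ (in which case the first identity collapses to $\<\a\mid\bv\>=\<\b\mid\av\>$, which again negates both alternatives of the hypothesis). So $n=2$ is impossible.

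For $n=3$, by swapping $\a,\b$ if necessary, I may assume $\b=\s\b$ and $\a\ne\s\a\ne\b$. Then $\s\bv=\bv$ and the first folding identity collapses to $\<\a\mid\bv\>=\<\s\a\mid\bv\>$, which excludes the first alternative of the hypothesis. Hence the second alternative must hold: $\<\b\mid\av\>\ne\<\b\mid\s\av\>$. In particular $\b$ is joined to both $\a$ and $\s\a$ in $\cD$, while by (i) $\a\not\sim\s\a$; so the subdiagram on $S_0=\{\a,\b,\s\a\}$ is the rank-$3$ path with $\b$ in the middle. Among the rank-$3$ paths of finite type, the equality $\<\a\mid\bv\>=\<\s\a\mid\bv\>$ excludes $\sC_3$ (whose two middle Cartans at $\b$ have different absolute values) and the asymmetry $\<\b\mid\av\>\ne\<\b\mid\s\av\>$ excludes $\sA_3$; what remains is $\sB_3$.

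Finally, I would show $n=4$ cannot occur. By (i) the edges $\a\,\s\a$ and $\b\,\s\b$ are absent, so at most the four ``rectangle'' edges $\a\b,\,\s\a\s\b,\,\a\s\b,\,\s\a\b$ can appear in the induced subdiagram on $S_0$. Subdiagrams of finite-type Dynkin diagrams are forests with at most one multiple bond, so the possible edge sets reduce to: empty, a single edge, a two-edge matching $\{\a\b,\s\a\s\b\}$ or $\{\a\s\b,\s\a\b\}$, a two-edge $3$-vertex path, or a three-edge $4$-vertex path. In each of these configurations the two folding identities, combined with $\<\cdot\mid\cdot^\vee\>\le 0$ for off-diagonal Cartans and the at-most-one-multiple-bond restriction, force both $\<\a\mid\bv\>=\<\s\a\mid\s\bv\>$ and $\<\b\mid\av\>=\<\s\b\mid\s\av\>$, contradicting the hypothesis. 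I expect this enumeration to be the main technical obstacle: the key mechanism is that an identity $\<\a\mid\bv\>+\<\a\mid\s\bv\>=\<\s\a\mid\bv\>+\<\s\a\mid\s\bv\>$ whose four summands are $\le 0$ and at most one of which is $\le -2$ can only be satisfied by pairwise equality.
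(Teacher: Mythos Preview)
Your approach is essentially the paper's: part (i) is identical, and your split on $n=|S_0|$ matches the paper's split on the number $f$ of $s$-fixed points in $S_0$ (your $n=2$ absorbs both $f=2$ and the case $\b={}^s\a$, your $n=3$ is $f=1$, your $n=4$ is $f=0$). The arguments for $n\le 3$ are correct and, for $n=3$, slightly cleaner than the paper's since you use the hypothesis directly to eliminate $\sA_3$.

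For $n=4$ the enumeration strategy works, but your stated ``key mechanism'' is false as written: from $a+b=c+d$ with all four terms $\le 0$ and at most one of them $\le -2$ you cannot conclude pairwise equality---witness $(-1)+(-1)=(-2)+0$. What actually rules out the bad configurations is case-dependent. For a single edge or a three-vertex path, one of the two identities has one side equal to $0$, so the nonpositive summands on the other side are forced to vanish, contradicting the presence of an edge. For a three-edge path (say the edge $\a\,{}^s\b$ missing), the first identity reads $\<\a\mid\bv\>=\<{}^s\a\mid\bv\>+\<{}^s\a\mid{}^s\bv\>\le -2$ and the second reads $\<{}^s\b\mid{}^s\av\>=\<\b\mid\av\>+\<\b\mid{}^s\av\>\le -2$, producing two multiple bonds in one connected subdiagram. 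Only the empty graph and the two two-edge matchings survive, and there the identities give the required equalities. The paper short-circuits the enumeration by first using acyclicity to kill one diagonal (say $\a\,{}^s\b$) without loss of generality and then running both identities at once; this treats all your sub-cases simultaneously and is worth adopting.
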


\begin{proof}

  Assertion \ref{it:mainlemma1} is just the defining property
  \ref{it:fold1} of a folding. For \ref{it:mainlemma2} observe first
  that $\b=\s\a$ can't happen. So we may assume that the orbits
  $\{\a,\s\a\}$ and $\{\b,\s\b\}$ are disjoint.

  Let $\cD_0$ be the Dynkin diagram of $S_0$. Then, according to the
  number $f$ of $s$-fixed points in $S_0$, there are three cases to be
  distinguished:

  $f=2$: Here $s$ acts as identity on $S_0$ and \ref{it:mainlemma2} is
  trivially satisfied.

  $f=1$. Without loss of generality we may assume that $\a\ne\s\a$ and
  $\b=\s\b$. The underlying simply laced graph of $\cD_0$ looks like
  this:
  \begin{equation}
    \vcenter{\hbox{$
        \begin{tikzpicture}[scale=\scalar]
          \xx(0,2;\a) \xx(2,1;\b) \yy(0,0;\s\a) \draw(0,2) -- (2,1);
          \draw(0,0) -- (2,1);
        \end{tikzpicture}$}}
  \end{equation}
  The second folding property \ref{it:fold2} implies
  \begin{equation}
    a:=\<\a\mid\bv\>=\<\s\a\mid\bv\>.
  \end{equation}
  The case $a=0$ can't happen under the assumptions of
  \ref{it:mainlemma2}. If $a\le-2$ then $\cD_0$ would contain two
  arrows which is impossible. So assume $a=-1$. This means that $\b$
  cannot be shorter than $\a$ or $\s\a$ which leaves for $\cD_0$ only
  the types $\sA_3$ and $\sB_3$, confirming the assertion.

  $f=0$: We claim that $s$ acts as an automorphisms on $\cD_0$. Its
  underlying simply laced graph could a priori look like this:
  \begin{equation}
    \vcenter{\hbox{$\begin{tikzpicture}[scale=\scalar] \xx(0,2;\a)
          \xx(2,2;\b) \yy(0,0;\s\a) \yy(2,0;\s\b) \draw (0,2)--(2,2);
          \draw[dashed] (0,2)--(2,0); \draw (0,0)--(2,2); \draw
          (0,0)--(2,0);
        \end{tikzpicture}$}}
  \end{equation}
  Since $\cD_0$ is not a cycle at least one of the diagonals is
  missing. Without loss of generality we may choose it to be the
  dashed line, i.e., we assume
  \begin{equation}
    \<\a\mid\s\bv\>=\<\s\b\mid\av\>=0.
  \end{equation}
  Now property \ref{it:fold2} implies
  \begin{equation}
    \<\a-\s\a\mid\bv+\s\bv\>=\<\b-\s\b\mid\av+\s\av\>=0.
  \end{equation}
  From this, we get the two equations
  \begin{equation}\label{eq:arrow}
    \begin{array}{rll}
      \<\a\mid\bv\>&=\ \<\s\a\mid\bv\>\ +&\<\s\a\mid\s\bv\>\\
      \<\b\mid\av\>&+\ \<\b\mid\s\av\>\ = &\<\s\b\mid\s\av\>
    \end{array}
  \end{equation}
  Observe that all numbers involved are non-positive. Hence, if
  $\<\a\mid\bv\>=0$ or $\<\s\b\mid \s\av\>=0$ then all other terms are
  $0$. Then $\cD_0$ is of type $4\sA_1$ and the assertion is true.

  Now assume that both $\<\a\mid\bv\>$ and $\<\s\b\mid\s\av\>$ are
  $\le-1$. If the middle terms (corresponding to the diagonal edge)
  were also $\le-1$ then both $\<\a\mid\bv\>$ and $\<\s\b\mid\s\av\>$
  were even $\le-2$. Since $\cD_0$ does not contain two arrows this is
  impossible. This forces $\<\a\mid\s\bv\>=\<\s\b\mid\av\>=0$, i.e.,
  the other diagonal edge is absent, too. But then \eqref{eq:arrow}
  boils down to $\a,\b$ and $\s\a,\s\b$ being connected by the same
  type of edge proving the assertion also in this case.
\end{proof}

Now it is easy to classify foldings.

\begin{corollary}\label{cor:folding}

  Every folding is a disjoint union of the following foldings:

  \begin{itemize}

  \item A component where $s$ acts trivially.

  \item Two isomorphic components which are interchanged by $s$.

  \item One of the following four cases:
    \begin{equation}\label{eq:diagrams}
      \vcenter{\hbox{$ \begin{tikzpicture}[scale=\scalar]

            \xx(0,0;) \draw[densely dotted,<->] (0,0.2) -- (0,0.8);
            \draw (0.1,0)--(0.9,0); \xx(1,0;) \draw[densely
            dotted,<->] (1,0.2) -- (1,0.8); \ddd(1,0) \xx(3,0;)
            \draw[densely dotted,<->] (3,0.2) -- (3,0.8); \draw
            (3.1,0)--(3.9,0); \xx(4,0;) \draw[densely dotted,<->]
            (4,0.2) -- (4,0.8); \xx(0,1;) \draw (0.1,1)--(0.9,1);
            \xx(1,1;) \ddd(1,1) \xx(3,1;) \draw (3.1,1)--(3.9,1);
            \xx(4,1;) \draw (4,1)--(5,0.5); \xx(5,0.5;) \draw
            (4,0)--(5,0.5);

          \end{tikzpicture}
          \qquad
          \begin{tikzpicture}[scale=\scalar]

            \xx(0,0.5;) \draw (0.1,0.5)--(0.9,0.5); \xx(1,0.5;)
            \ddd(1,0.5) \xx(3,0.5;) \xx(4,1;) \xx(4,0;) \draw
            (3.0,0.5) -- (4,1); \draw (3.0,0.5) -- (4,0);
            \draw[densely dotted,<->] (4,0.2) -- (4,0.8);

\end{tikzpicture}
\qquad
\begin{tikzpicture}[scale=\scalar]

  \xx(0,0.5;) \xx(1,0.5;) \xx(2,1;) \xx(2,0;) \xx(3,1;) \xx(3,0;)
  \draw (0,0.5)--(1,0.5); \draw (1,0.5)--(2,1); \draw (1,0.5)--(2,0);
  \draw (2,1)--(3,1); \draw (2,0)--(3,0); \draw[densely dotted,<->]
  (2,0.2) -- (2,0.8); \draw[densely dotted,<->] (3,0.2) -- (3,0.8);

\end{tikzpicture}
\qquad\qquad
\begin{tikzpicture}[scale=\scalar]

  \xx(0,2;) \xx(1,1.5;) \xx(0,1;) \draw(0.1,1.95) -- (0.9,1.55);
  \draw(0.095,1.095) -- (0.905,1.5); \draw(0.14,1.03) -- (0.92,1.42);
  \draw(0.09,1.05) -- (0.17,1.29); \draw(0.09,1.05) -- (0.33,0.97);
  \draw[densely dotted,<->] (0,1.2) -- (0,1.8);

\end{tikzpicture}$}}
\end{equation}

\end{itemize}

\end{corollary}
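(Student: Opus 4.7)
The plan is to decompose $\cD$ into connected components and classify how $s$ acts on each. If a component $C$ is not $s$-stable, then $s(C)$ is another connected component isomorphic to $C$, and the pair $\{C,s(C)\}$ realises the second listed case; so I may restrict attention to $s$-stable components. Trivial action on $C$ then yields the first listed case, and the substantive work concerns $s$-stable components $C$ with non-trivial $s$-action.

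For such $C$ I would distinguish whether $s|_C$ is a diagram automorphism of (the Dynkin diagram of) $C$. If yes, the list of involutive diagram automorphisms of connected finite-type Dynkin diagrams is short and classical. Property \ref{it:fold1} rules out the reversal of $\sA_{2n}$ (whose central pair of nodes is adjacent), leaving exactly: the reversal of $\sA_{2n+1}$, the fork swap of $\sD_{n+1}$ for $n\ge 2$ (which, by triality, covers every involution of the $\sD_4$ diagram), and the non-trivial involution of $\sE_6$. These give the first three diagrams in \eqref{eq:diagrams}.

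If $s|_C$ is not a diagram automorphism, then Lemma~\ref{lemma:main}\ref{it:mainlemma2} provides $\a,\b\in C$ such that $S_0:=\{\a,\b,\s\a,\s\b\}$ spans a $\sB_3$-subdiagram inside $C$; it remains to show $C=S_0$. Labelling $S_0$ as $\a_1-\a_2\Rightarrow\a_3$ with $\s\a_1=\a_3$ and $\s\a_2=\a_2$, I would assume towards contradiction that some $\g\in C\setminus S_0$ is adjacent to a node of $S_0$ and run through the short list of configurations: attachment of $\g$ to $\a_1$, $\a_2$, or $\a_3$, crossed with $\s\g=\g$ or $\s\g\ne\g$. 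Each case should be eliminated either by the impossibility of the resulting subdiagram in a finite root system, or by a direct application of folding axiom \ref{it:fold2}; for instance, if $\s\g=\g$ and $\g$ is adjacent to $\a_1$, axiom \ref{it:fold2} applied to $(\a_1,\g)$ forces $\g$ to be adjacent to $\a_3$ as well, creating a forbidden cycle. The surviving configuration is $C=S_0$, which is the fourth diagram in \eqref{eq:diagrams}.

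The main obstacle is this last case analysis, especially the sub-cases with $\s\g\ne\g$, where one must simultaneously check admissibility of the enlarged subdiagram, verify folding axiom \ref{it:fold2} for every newly introduced pair, and ensure no new orbit of size two violates \ref{it:fold1}. Once all sub-cases are ruled out, assembling the local classifications over all components of $\cD$ produces the stated direct-sum decomposition of the folding.
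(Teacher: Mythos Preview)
Your proposal is correct and follows the natural route that the paper leaves implicit: the paper gives no proof at all, merely stating ``Now it is easy to classify foldings'' after \cref{lemma:main}, so your decomposition into $s$-stable components, followed by the dichotomy ``diagram automorphism vs.\ not'' and the use of \cref{lemma:main}\,\ref{it:mainlemma2} to pin down the $\sB_3$ case, is exactly the intended argument.

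One small remark on the case analysis you flag as the main obstacle: it can be shortened by first observing that once a $\sB_3$ subdiagram $S_0$ sits inside $C$, the connected Dynkin diagram $C$ already contains a double edge, so $C\in\{\sB_n,\sC_n,\sF_4,\sG_2\}$. For $\sG_2$ there is no $\sB_3$ subdiagram; for $\sB_n$ or $\sC_n$ with $n\ge4$ the short (resp.\ long) end root has a unique neighbour, so the swap $\a_1\leftrightarrow\a_3$ of $S_0$ cannot be realised inside the chain without violating \ref{it:fold2} against the extra node; and for $\sF_4$ the extra node $\g=\a_4$ is fixed by $s$ (no partner available), whence \ref{it:fold2} with $(\a_1,\g)$ forces $\g$ adjacent to both ends of $S_0$, a cycle. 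This avoids most of the $\s\g\ne\g$ sub-cases you were worried about.
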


Observe that in all cases but the last, $s$ is a graph automorphism.

For $\a\in S$ we define the orbit sum
\begin{equation}
  \avq:=\begin{cases}\av&\text{if
    }\a=\s\a\\\av+\s\av&\text{otherwise}.\end{cases}
\end{equation}
and $\Sq^\vee:=\{\avq\mid\a\in S\}$. Then property \ref{it:fold1} of a
folding implies $\<\a\mid\avq\>=2$ while property \ref{it:fold2} is
equivalent to $\<\a\mid\bvq\>=\<\s\a\mid\bvq\>$. We are going to show
that the sets $S/\<s\>$ and $\overline \Sq^\vee$ are the roots and the
coroots of a subgroup of $G$. More generally, we construct coverings
of such a subgroup.

To this end, let $\Xi$ be a lattice and let $r:\Xiq\to\Xi$ be a
homomorphism with finite cokernel. Then $r^\vee:\Xiv\to\Xiqv$ will be
injective which means, in particular, that $\Xi$ and $r^\vee(\Xiv)$
are still dual to each other. Let $A$ be the torus with
$\Xi(A)=\Xi$. Then $r$ induces a homomorphism $\phi_A:A\to T$ with
finite kernel.

\begin{lemma}\label{lemma:folded}
  
  Let $s$ be a folding of the root system of $G$. Assume that
  $r(\a-\s\a)=0$ for all $\a\in S$ and that
  $\Sq^\vee\subseteq r^\vee(\Xiv)$. Then there is a connected
  reductive group $H$ with based root datum
  $(\Xi,r(S),r^\vee(\Xiv),\Sq^\vee)$ and a homomorphism $\phi:H\to G$
  with finite kernel such that $\phi|_A=\phi_A$.

\end{lemma}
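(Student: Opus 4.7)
The plan is three-fold: verify that $(\Xi, r(S), r^\vee(\Xiv), \Sq^\vee)$ is a based root datum, invoke Chevalley--Demazure to produce $H$, and finally construct $\phi$ by gluing local rank-one homomorphisms.

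For the first step, injectivity of $r^\vee$ lets us identify $\Sq^\vee$ with its unique preimage in $\Xiv$. By the defining property of $r^\vee$, the Cartan pairing satisfies
\[
\<r(\a) \mid \bvq\> = \<\a \mid \bvq\> \quad \text{in } \Xiq \times \Xiqv .
\]
Folding property \ref{it:fold1} yields $\<\a \mid \avq\> = 2$, whether or not $\a = \s\a$; folding property \ref{it:fold2} yields $\<\a - \s\a \mid \bvq\> = 0$, so the pairing is well-defined on $s$-orbits, consistent with the tautological identity $r(\a) = r(\s\a)$. For $\a, \b$ in distinct $s$-orbits, $\<\a \mid \bvq\>$ is a sum of pairings $\<\a \mid \bv\>$ between distinct simple roots of $G$, hence a non-positive integer. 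That $r(S)$ is the basis of a genuine finite root system then follows from \cref{prop:cryst} after transporting the $\<s\>$-symmetrized $W$-invariant form on $\Xiq \otimes \RR$ through $r$; the open half-space condition is inherited from that for $S$ via $r$. Alternatively, a direct case-check against the four diagrams of \cref{cor:folding} identifies each folded Cartan matrix with a classical one of type $\sB$, $\sC$, $\sF_4$, $\sG_2$, or $\sB_2$.

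Given the based root datum, the Chevalley--Demazure existence theorem furnishes a unique-up-to-isomorphism connected reductive group $H$ with maximal torus $A$ and a pinning. To construct $\phi$, set $\phi|_A := \phi_A$ and define $\phi$ on each rank-one subgroup $H(r(\a))$ according to the $s$-orbit of $\a$: if $\a = \s\a$, identify $H(r(\a))$ with $G(\a)$, whose rank-one root data agree; if $\a \ne \s\a$, folding property \ref{it:fold1} forces $\<\a \mid \s\av\> = 0$, so $G(\a)$ and $G(\s\a)$ commute and $G(\a) \cdot G(\s\a) \cong G(\a) \times G(\s\a)$, and the coroot $\avq = \av + \s\av$ dictates the diagonal embedding of $H(r(\a))$ into this direct product. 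Assembling these local pieces into a global homomorphism $\phi: H \to G$ is standard: $H$ is generated by $A$ and the root subgroups indexed by $\pm r(\a)$, and their Chevalley commutator and Serre relations---dictated by the folded Cartan matrix verified in Step~1---are matched by the corresponding relations among the proposed images in $G$. Since $\phi$ is injective on every rank-one subgroup of $H$ and restricts to $\phi_A$ on $A$, its kernel lies in $A$ and coincides with $\ker \phi_A$, hence is finite.

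The principal technical obstacle is the exceptional folding listed in \cref{cor:folding} in which $s$ is not a diagram automorphism. There the classical folding construction does not directly apply, and both the root-system property of $r(S)$ in Step~1 and the verification of Chevalley relations in Step~2 must be handled via the explicit orbit-pairing identities supplied by folding properties \ref{it:fold1} and \ref{it:fold2}, rather than by appealing to a Dynkin diagram symmetry.
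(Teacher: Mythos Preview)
Your approach differs substantially from the paper's. The paper constructs $H$ by first exhibiting a subgroup $H\ad\subseteq G\ad$ case-by-case via the classification of foldings (\cref{cor:folding}): for diagram automorphisms it takes the connected fixed-point group $(G\ad^s)^\circ$, while for the exceptional $\sB_3$-folding it produces $\sG_2\hookrightarrow\SO(7)$ explicitly through the $7$-dimensional representation and checks how the simple roots restrict. It then pulls back to $G$ and adjusts the lattice using the isogeny theorem. This buys concreteness at the cost of casework; your uniform Serre-presentation route would avoid the explicit $\sG_2$-embedding.

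However, the gluing step is where the real content lies, and you have not carried it out. Asserting that the relations ``are matched'' is precisely the point at issue. In the $\sB_3$-folding you must verify, for instance, that $(\mathrm{ad}\,(e_{\a_1}+e_{\a_3}))^4\,e_{\a_2}=0$ in $\fg$; since $\a_1\perp\a_3$ this expands binomially, and vanishing amounts to the statement that $k\a_1+l\a_3+\a_2$ is never a root of $\sB_3$ when $k+l=4$. That does hold, by a short root-string argument (using that $\a_2-\a_i$ is not a root one gets $k\le-\<\a_2\mid\av_1\>=1$ and $l\le-\<\a_2\mid\av_3\>=2$), but the argument uses the ambient root system, not merely the pairing identities \ref{it:fold1} and \ref{it:fold2} as your final paragraph suggests. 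Two smaller points: Serre's relations at the Lie-algebra level already suffice to define the map, so invoking the full Chevalley commutator relations (which would require matching all structure constants) is both harder and unnecessary; and $G(\a)\cdot G(\s\a)$ need not literally be $G(\a)\times G(\s\a)$, as the factors may intersect in a central subgroup, though this does not affect your diagonal construction.
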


\begin{proof}
  We construct $H$ in three stages. First, we construct a subgroup
  $H\ad$ of the adjoint group $G\ad:=G/Z(G)$ having the root datum
  $(\Xi\ad,r(S),\Xi\ad^\vee,\Sq^\vee)$ where
  $\Xi\ad:=r(\ZZ\,S)=\ZZ\, r(S)$. To this end we may assume that the
  folding is one of the indecomposable types of \cref{cor:folding}. In
  the case $s$ is a graph automorphism the existence of $H$ is well
  known (see e.g. \cite{Springer}*{Prop.~10.3.5}): the choice of a
  pinning $e_\a\in\fg_\a$ extends the $s$-action to an action on
  $G\ad$ and $H\ad$ will be the connected fixed point group
  $(G\ad^s)^\circ$.
  
  If $s$ is of the last type we have to show that $G\ad=\SO(7)$ (the
  adjoint group of type $\sB_3$ with simple roots $\a_1,\a_2,\a_3$)
  contains a subgroup $H\ad$ of rank $2$ such that $\a_1$ and $\a_3$
  restrict to the same simple root of $H\ad$ and $\a_2$ restricts to
  the other. Of course, such a subgroup is well known, namely
  $H\ad=\sG_2$. To see this let $\a_s$ and $\a_l$ be the two simple
  roots of $\sG_2$ and consider its $7$-dimensional representation. It
  has seven weights, namely
  \begin{equation}
    2\a_s+\a_l,\a_s+\a_l,\a_s,0,-\a_s,-\a_s-\a_l,-2\a_s-\a_l
  \end{equation}
  It is known that the $\sG_2$-action preserves a non-degenerate
  quadratic form, yielding an embedding $\sG_2\into\SO(7)$. The simple
  roots of $\SO(7)$ restrict to $\sG_2$ as required:
  \begin{align}
    &\res\a_1=(2\a_s+\a_l)-(\a_s+\a_l)=\a_s\nonumber\\
    &\res\a_2=(\a_s+\a_l)-\a_s=\a_l\\
    &\res\a_3=\a_s\nonumber
  \end{align}
  This establishes the existence of $H\ad$ also in this case.

  Now let $p:G\to G\ad$ be the projection and $H_1=p^{-1}(H\ad)^\circ$
  the connected preimage. Then the based root datum of $H_1$ is
  $(\Xi_1,r_1(S),\Xiv_1,\Sq^\vee)$ where
  \begin{align}
    &\Xiv_1=\{\chiv\in\Xiqv\mid
      \<\a\mid\chiv\>=\<\s\a\mid\chiv\>\text{ for all $\a\in S$}\}\text{ and }\\
    &\Xi_1=\Xiq/K\text{ with }K=\Span_\QQ(\a-\s\a\mid\a\in S)\cap\Xiq.
  \end{align}
  Moreover, $r_1:\Lambda\to\Xi_1$ is the projection. The conditions on
  $\Xi$ and $r$ ensure that $r$ factors through $\Xi_1$ and that
  $\Xiv$ contains the coroots inside $\Xiv_1$. The isogeny theorem
  \cite{Springer}*{9.6.5} then shows that there is an isogeny
  $H\to H_1$ inducing $\phi_A$ on $A\subseteq H$.
\end{proof}

\begin{corollary}\label{cor:minrank}

  Let $\phi:H\to G$ be obtained by folding as in
  \cref{lemma:folded}. Then:

  \begin{enumerate}

  \item\label{it:minrank1} Let $Z(G)$ be the center of a group
    $G$. Then $Z(H)=\phi^{-1}(Z(G))$. In particular, $\phi$ induces an
    injective homomorphism $H\ad\into G\ad$ between adjoint groups.

  \item\label{it:minrank2} The variety $G\ad/H\ad$ is a product of
    factors isomorphic to one of:
    \begin{equation}\label{eq:minranklist}.
      \begin{array}{ll}
        \bullet\quad K/K&\text{$K$ simple, adjoint,}\\
        \bullet\quad(K\times K)/{\rm diag}\,K&\text{$K$ simple, adjoint,}\\
        \bullet\quad\PGL(2n)/\PSp(2n)&n\ge 2,\\
        \bullet\quad\PSO(2n)/\SO(2n-1)&n\ge 4,\\
        \bullet\quad\sE_6^{\rm ad}/\sF_4,\\
        \bullet\quad\SO(7)/\sG_2.
      \end{array}
    \end{equation}

  \end{enumerate}

\end{corollary}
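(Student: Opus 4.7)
The first assertion I would tackle by establishing $\phi^{-1}(T)=A$ and then carrying out a direct character-theoretic computation. Because $\ker\phi$ is finite and normal in the connected group $H$ it is central, so for any $h\in\phi^{-1}(T)$ and $a\in A$ the commutator $[a,h]$ defines a morphism from the connected variety $A$ to the finite group $\ker\phi$ and is therefore trivial. Consequently $h$ centralizes $A$ and lies in $C_H(A)=A$. Once $\phi^{-1}(T)=A$ is known, the remainder follows from the identity $\a(\phi(h))=r(\a)(h)$, valid for all $\a\in S$ and $h\in A$ because $\phi|_A=\phi_A$ is induced by $r$. Since the simple roots of $H$ are exactly $r(S)$, an element $h\in A$ lies in $Z(H)$ if and only if $r(\a)(h)=1$ for all $\a\in S$, which is equivalent to $\phi(h)\in Z(G)$. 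The induced map $H\ad\to G\ad$ then has trivial kernel $\phi^{-1}(Z(G))/Z(H)$, proving injectivity.

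For the second assertion I would invoke \cref{cor:folding} to decompose the folding $s$ into its indecomposable pieces; each piece determines a simple factor of $G\ad$ paired with the corresponding factor of $H\ad$, so that $G\ad/H\ad$ decomposes as a product over these pieces. It remains to identify each factor with an entry of \eqref{eq:minranklist}. A component on which $s$ acts trivially contributes $K/K$, while two isomorphic simple components swapped by $s$ yield the diagonal quotient $(K\times K)/\mathrm{diag}\,K$. In each of the three graph-automorphism cases of \eqref{eq:diagrams}, the proof of \cref{lemma:folded} identifies $H\ad$ with $(G\ad^s)^\circ$, and the standard description of fixed-point subgroups of diagram automorphisms then gives $\PSp(2n)\subset\PGL(2n)$ in type $\sA_{2n-1}$, $\SO(2n-1)\subset\PSO(2n)$ in type $\sD_n$, and $\sF_4\subset\sE_6\ad$ in type $\sE_6$. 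The remaining non-diagram-automorphism folding $\sB_3\to\sG_2$ is handled by the explicit embedding $\sG_2\subset\SO(7)$ via the $7$-dimensional representation already produced in the proof of \cref{lemma:folded}.

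The main obstacle is mostly bookkeeping: part \emph{i)} reduces to the clean identification $\phi^{-1}(T)=A$, whose only non-trivial input is that finite normal subgroups of a connected group are central. Part \emph{ii)} reduces to matching the adjoint group produced by the folding construction of \cref{lemma:folded} with the classical names of its fixed-point subgroups. Since that identification is essentially already accomplished within the proof of \cref{lemma:folded}, the present corollary is largely a translation of that classification into the language of homogeneous varieties.
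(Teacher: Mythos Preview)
Your proof is correct and follows essentially the same route as the paper: part \ref{it:minrank1} via the characterization of the center as the common kernel of the simple roots together with the fact that the simple roots of $H$ are the restrictions $r(\a)$ of those of $G$, and part \ref{it:minrank2} by matching the indecomposable foldings of \cref{cor:folding} against the list \eqref{eq:minranklist}. Your additional step of first proving $\phi^{-1}(T)=A$ via the commutator argument is a useful detail that the paper's one-line proof leaves implicit; it is needed to ensure that every element of $\phi^{-1}(Z(G))$ actually lies in $A$ before the simple-root criterion can be applied.
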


\begin{proof}

  \cref{it:minrank1} follows from the fact that the center of a
  reductive group is the common kernel of its simple roots and that
  the simple roots of $H$ are the restrictions of the simple roots of
  $G$. Now the items of \ref{it:minrank2} correspond precisely to
  those of \cref{cor:folding}.
\end{proof}

The very same list of diagrams as in \cref{cor:folding} already
appeared in a different context. For this let $X=G/H$ be a homogeneous
spherical variety. Attached to it is a lattice $\Xi(X)$ (see the
paragraph before \cref{prop:non-spherical} below for a
definition). Its rank is called the \emph{rank $\rk X$ of $X$}. It is
easy to see that the ranks of $G$, $H$, and $G/H$ satisfy the
inequality
\begin{equation}\label{eq:rank}
  \rk G/H\ge\rk G-\rk H
\end{equation}
Spherical varieties for which \eqref{eq:rank} is an equality are
called \emph{of minimal rank} and have been classified by Ressayre in
\cite{Res}. The point is now that when $G/H$ is affine (i.e., when $H$
is reductive) Ressayre obtains the same list as above. Thus we obtain:

\begin{corollary}\label{cor:affsphminrk}

  Let $\phi:H\to G$ be as in \cref{lemma:folded}. Then $G/\phi(H)$ is
  an affine spherical variety of minimal rank.

\end{corollary}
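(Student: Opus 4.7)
The plan is to deduce the corollary from \cref{cor:minrank} together with Ressayre's classification of affine spherical varieties of minimal rank in \cite{Res}.

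First, I would dispose of affineness. The folding construction in \cref{lemma:folded} produces a reductive group $H$ and a homomorphism $\phi:H\to G$ with finite kernel, so $\phi(H)\subseteq G$ is a reductive subgroup. By Matsushima's theorem, $G/\phi(H)$ is an affine variety, regardless of sphericity.

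Second, I would reduce sphericity and the minimal rank condition to the adjoint level. By part \emph{i)} of \cref{cor:minrank}, $\phi$ descends to an injection $H\ad\into G\ad$, and the natural map $G/\phi(H)\to G\ad/H\ad$ is the quotient by the central diagonalizable subgroup $Z(G)/\phi(Z(H))$ of $G/\phi(H)$. Such a quotient alters the variety only by a central torus and a finite covering, so being affine, spherical, and of minimal rank all transfer in both directions; the identity $Z(H)=\phi^{-1}(Z(G))$ of \cref{cor:minrank} ensures that the rank contributions of the central tori appear compatibly on both sides of \eqref{eq:rank}. It therefore suffices to treat $G\ad/H\ad$.

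Finally, part \emph{ii)} of \cref{cor:minrank} decomposes $G\ad/H\ad$ as a product of factors from the list \eqref{eq:minranklist}. Since affineness, sphericity, and the minimal rank equality are all multiplicative on direct products, it is enough to check each factor individually. At this point I would invoke Ressayre \cite{Res}: his classification of indecomposable homogeneous affine spherical varieties of minimal rank yields exactly the list \eqref{eq:minranklist}, so each factor is simultaneously affine, spherical, and satisfies equality in \eqref{eq:rank}. The only delicate point in the argument is the bookkeeping in the second step, matching the rank and character lattice of $G/\phi(H)$ against those of $G\ad/H\ad$; this is the sole place where one must carefully use part \emph{i)} of \cref{cor:minrank} rather than simply citing the list.
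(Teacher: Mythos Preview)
Your proposal is correct and follows the same route as the paper: the paper's argument is simply the observation that Ressayre's classification in \cite{Res} produces exactly the list \eqref{eq:minranklist}, so \cref{cor:minrank}\,\ref{it:minrank2} immediately yields the claim. You are more explicit than the paper about the passage from $G/\phi(H)$ to $G\ad/H\ad$ (which the paper leaves implicit), but the overall strategy is identical.
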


It is recommended to consult \cite{Res} for further properties of
minimal rank varieties.

\section{\Wsss.}\label{sec:WSS}

A $G$-variety $X$ is called \emph{spherical} if $B$ has an open orbit
in $G$. Homogeneous spherical varieties have been classified by Luna
and Bravi-Pezzini, \cites{Luna,BraviPezzini}, in terms of a
combinatorial structure called a \emph{\ssy.}. In addition to the
based root datum of $G$, it consists of a quintuple
$(\Xi,\Sigma,\cD,c,M)$ where $\Xi$ is a subgroup of $\Xiq$ (the weight
lattice), $\Sigma$ is a finite subset of $\Xi$ (the spherical roots),
$\cD$ is a finite set (the colors), $c$ is a map $\cD\to\Xiv$, and $M$
is a subset of $\cD\times S$. These objects are subject to a number of
axioms (see e.g. \cite{Luna}*{\S2.2}) which we won't repeat.

In practice, it is useful to work with a structure which contains less
information than a \ssy.. It is obtained by discarding most
information on $\cD$ and renormalizing the elements of $\Sigma$. There
are at least two reasons for doing so: first, these weaker structures
are much easier to handle while retaining most essential information
of a \ssy.. Secondly, unlike \ssys., it is possible to assign this
weaker structure to any $G$-variety (spherical or not, see
\cref{prop:non-spherical}). So they have a much wider scope.

\begin{definition}\label{def:WSS}

  A \emph{\wss. (with respect to $G$ or $\cSq$)} is a triple
  $(\Xi,\Sigma,S\p)$ where $\Xi\subseteq\Xiq$ is a subgroup and
  $\Sigma\subseteq\Xi$, $S\p\subseteq S$ are subsets such that the
  following axioms are satisfied:

  \begin{enumerate}

  \item\label{it:wss1} For every $\sigma\in\Sigma$ there is a subset
    $|\sigma|\subseteq S$ (its \emph{support}) such that
    $\sigma=\sum_{\a\in|\sigma|}n_\a\a$ with $n_\alpha\ge1$ and such
    that the triple $(|\sigma|,n_*,|\sigma|\cap S\p)$ appears in
    \cref{tab:NSR} (where the $n_\a$'s are the labels and the elements
    of $S\p$ are the black vertices).

  \item\label{it:wss2} Let $\a\in S\p$. Then $\<\Xi\mid\av\>=0$.

  \item\label{it:wss3} Let $\sigma=\a+\b\in\Sigma$ be of type $\sD_2$,
    i.e., $\a,\b\in S$ with $\a\perp\b$. Then $\<\Xi\mid\av-\bv\>=0$.

  \item\label{it:wss4} Let $\a,\b\in S$ with $\a,\a+\b\in\Sigma$. Then
    $\<\b\mid\av\>\ne-1$.

  \end{enumerate}

\end{definition}

\begin{table}[!h]

  \begin{equation}
    \begin{array}{llll}

      |\sigma|&\sigma\text{ and }|\sigma|\cap S\p\\

      \noalign{\smallskip\hrule\smallskip}

      \sA_1&
             \begin{tikzpicture}[scale=\scalar]
               \oo(0,0;1)
             \end{tikzpicture}
      \\

      \sA_n,\ n\ge2
              &
                \begin{tikzpicture}[scale=\scalar]
                  \oo(0,0;1) \draw (0.1,0)--(0.9,0); \xx(1,0;1) \draw
                  (1.1,0)--(1.9,0); \xx(2,0;1) \ddd(2,0) \xx(4,0;1)
                  \draw (4.1,0)--(4.9,0); \oo(5,0;1)
                \end{tikzpicture}

      \\
      \sB_n,\ n\ge2&
                     \begin{tikzpicture}[scale=\scalar]
                       \oo(0,0;1) \draw (0.1,0)--(0.9,0); \xx(1,0;1)
                       \draw (1.1,0)--(1.9,0); \xx(2,0;1) \ddd(2,0)
                       \xx(4,0;1) \draw (4.1,0.04)--(4.85,0.04); \draw
                       (4.1,-0.04)--(4.85,-0.04); \xx(5,0;1) \draw
                       (4.9,0) -- (4.7,0.15); \draw (4.9,0) --
                       (4.7,-0.15);
                     \end{tikzpicture}
      \\

      \sB_n,\ n\ge2&
                     \begin{tikzpicture}[scale=\scalar]
                       \oo(0,0;1) \draw (0.1,0)--(0.9,0); \xx(1,0;1)
                       \draw (1.1,0)--(1.9,0); \xx(2,0;1) \ddd(2,0)
                       \xx(4,0;1) \draw (4.1,0.04)--(4.85,0.04); \draw
                       (4.1,-0.04)--(4.85,-0.04); \oo(5,0;1) \draw
                       (4.9,0) -- (4.7,0.15); \draw (4.9,0) --
                       (4.7,-0.15);
                     \end{tikzpicture}
      \\

      \sC_n,\ n\ge3&
                     \begin{tikzpicture}[scale=\scalar]
                       \xx(0,0;1) \draw (0.1,0)--(0.9,0); \oo(1,0;2)
                       \draw (1.1,0)--(1.9,0); \xx(2,0;2) \ddd(2,0)
                       \xx(4,0;2) \draw (4.15,0.04)--(4.9,0.04); \draw
                       (4.15,-0.04)--(4.9,-0.04); \xx(5,0;1) \draw
                       (4.1,0) -- (4.3,0.15); \draw (4.1,0) --
                       (4.3,-0.15);
                     \end{tikzpicture}
      \\

      \sC_n,\ n\ge3&
                     \begin{tikzpicture}[scale=\scalar]
                       \oo(0,0;1) \draw (0.1,0)--(0.9,0); \oo(1,0;2)
                       \draw (1.1,0)--(1.9,0); \xx(2,0;2) \ddd(2,0)
                       \xx(4,0;2) \draw (4.15,0.04)--(4.9,0.04); \draw
                       (4.15,-0.04)--(4.9,-0.04); \xx(5,0;1) \draw
                       (4.1,0) -- (4.3,0.15); \draw (4.1,0) --
                       (4.3,-0.15);
                     \end{tikzpicture}
      \\

      \sF_4&
             \begin{tikzpicture}[scale=\scalar]
               \xx(0,0;1) \draw (0.1,0)--(0.9,0); \xx(1,0;2) \draw
               (1.1,0.04)--(1.85,0.04); \draw
               (1.1,-0.04)--(1.85,-0.04); \xx(2,0;3) \draw (1.9,0) --
               (1.7,0.15); \draw (1.9,0) -- (1.7,-0.15); \oo(3,0;2)
               \draw (2.1,0)--(2.9,0);
             \end{tikzpicture}
      \\

      \sG_2&
             \begin{tikzpicture}[scale=\scalar]
               \oo(4,0;2) \draw (4.17,0.06)--(4.9,0.06); \draw
               (4.1,0)--(4.9,0); \draw (4.17,-0.06)--(4.9,-0.06);
               \xx(5,0;1) \draw (4.1,0) -- (4.3,0.15); \draw (4.1,0)
               -- (4.3,-0.15);
             \end{tikzpicture}
      \\

      \sG_2&
             \begin{tikzpicture}[scale=\scalar]
               \oo(0,0;1) \draw (0.17,0.06)--(0.9,0.06); \draw
               (0.1,0)--(0.9,0); \draw (0.17,-0.06)--(0.9,-0.06);
               \oo(1,0;1) \draw (0.1,0) -- (0.3,0.15); \draw (0.1,0)
               -- (0.3,-0.15);
             \end{tikzpicture}
      \\

      \noalign{\smallskip\hrule\smallskip}

      \sD_2&
             \begin{tikzpicture}[scale=\scalar]
               \oo(0,0;1) \oo(1,0;1)
             \end{tikzpicture}
      \\

      \sD_n,\ n\ge3&
                     \vcenter{\hbox{$\begin{tikzpicture}[scale=\scalar]
                           \oo(0,0;2)
                           \draw (0.1,0)--(0.9,0);
                           \xx(1,0;2)
                           \ddd(1,0)
                           \xx(3,0;2)
                           \xx(3.5,0.5;1)
                           \xx(3.5,-0.5;1)
                           \draw (3.0,0) -- (3.5,0.5);
                           \draw (3.0,0) -- (3.5,-0.5);
                         \end{tikzpicture}$}}
      \\

      \sB_3&
             \begin{tikzpicture}[scale=\scalar]
               \xx(0,0;1) \draw (0.1,0)--(0.9,0); \xx(1,0;2) \draw
               (1.1,0.04)--(1.85,0.04); \draw
               (1.1,-0.04)--(1.85,-0.04); \oo(2,0;3) \draw (1.9,0) --
               (1.7,0.15); \draw (1.9,0) -- (1.7,-0.15);
             \end{tikzpicture}
      \\

      \noalign{\smallskip\hrule\smallskip}

    \end{array}
  \end{equation}
  \caption{The weak spherical roots}
  \label{tab:NSR}
\end{table}

\cref{tab:NSR} is derived from Akhiezer's classification
\cite{Akhiezer} of spherical varieties of rank $1$. More precisely,
that classification yields a list of all $\sigma$ which can be an
element of $\Sigma$ for some \ssy.. That list is, e.g., reproduced in
\cite{KnopSRSV}. An inspection shows that it contains entries which
are multiples of each other. Then \cref{tab:NSR} is obtained by only
picking those spherical roots which are primitive in the root lattice
of $G$. More precisely, for every spherical root $\sigma$ there is a
unique factor $c\in\{\frac12,1,2\}$ such that
$\sigma_{\rm norm}:=c\sigma$ is a weak spherical root.

\begin{remark}

  The normalization of spherical roots through primitive elements in
  the root lattice was proposed in \cite{SV}.

\end{remark}

This generalizes: let $\tilde\cS=(\tilde\Xi,\tilde\Sigma,\cD,c,M)$ be
a \ssy.. Then it is easy to deduce from the axioms satisfied by
$\tilde\cS$ that $\cS=(\Xi,\Sigma,S\p)$ is a \wss. where
\begin{equation}
  \begin{array}{lll}
    \Sigma&={}&\{\sigma_{\rm norm}\mid\sigma\in\tilde\Sigma\}\\
    \Xi&={}&\tilde\Xi+\ZZ\Sigma,\\
    S\p&={}&\{\a\in S\mid\text{there is no $D\in\cD$ with }(D,\a)\in M\}.
  \end{array}
\end{equation}

\begin{remark}
  Observe that $\tilde\Xi\subseteq\Xi\subseteq\half\tilde\Xi$ which
  means that the quotient $\Xi/\tilde\Xi$ is isomorphic to
  $(\ZZ/2\ZZ)^m$ for some $m\ge0$. Note that $m\le\rk G$. The upper
  bound is reached when $G$ is of adjoint type and $X=G/H$ is a
  symmetric variety of the same rank as $G$ (here $H$ is the full
  fixed point subgroup of an involution). Then $\tilde\Sigma=2S$ and
  $\tilde\Xi=\ZZ\tilde\Sigma$. Thus $\Sigma=S$ and therefore
  $\Xi/\tilde\Xi=\ZZ S/\ZZ(2S)\cong(\ZZ/2\ZZ)^{\rk G}$.

\end{remark}

By way of passing from $\tilde\cS$ to $\cS$ one loses not only the
information on the multiplier $c$ but also all information on $\cD$
except for which simple roots occur in $M$. On the other hand, it is
possible to define a \wss. for an arbitrary possibly non-spherical
$G$-variety $X$. More precisely, $\tilde\Xi(X)$, the set of characters
$\chi_f$ where $f$ is a $B$-semiinvariant rational function on $X$,
makes sense for any $X$. Furthermore, there are several ways to attach
a set $\tilde\Sigma(X)$ of spherical roots to $X$ (e.g., two of them
in \cite{KnopAuto}*{\S6}) which all differ just by the length of their
roots. So the set $\Sigma$ of normalized roots will be well
defined. To characterize $S\p$ let $P_\a\subseteq G$ be the minimal
parabolic attached to the simple root $\a\in S$. Then:

\begin{proposition}\label{prop:non-spherical}

  Let $X$ be a $G$-variety. Then $(\Xi,\Sigma,S\p)$ is a \wss. where
  \begin{equation}\label{eq:non-spherical}
    \begin{array}{lll}
      \Sigma&:=&\{\sigma_{\rm norm}\mid\sigma\in\Sigma(X)\}\\
      \Xi&:=&\Xi(X)+\ZZ\Sigma,\\
      S\p&:=&\{\a\in S\mid P_\a x=Bx\text{ for $x$ in a dense subset of $X$}\}.
    \end{array}
  \end{equation}
\end{proposition}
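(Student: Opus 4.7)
The plan is to verify the four axioms of \cref{def:WSS} separately for the triple $(\Xi,\Sigma,S^p)$ defined by \eqref{eq:non-spherical}. Axiom (i) is a structural statement that requires Akhiezer's classification of rank-$1$ spherical varieties \cite{Akhiezer}; the remaining three axioms follow from properties of $B$-semiinvariants, from the local-model construction of the spherical roots in \cite{KnopAuto}, and from a row-by-row inspection of \cref{tab:NSR}.

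For axiom (i), recall that the set $\Sigma(X)$ of a (not necessarily spherical) $G$-variety can be defined, e.g., as in \cite{KnopAuto}*{\S6} by extracting a spherical root from each rank-$1$ local model of $X$. Akhiezer enumerates all such rank-$1$ models, and the resulting list (reproduced in \cite{KnopSRSV}) consists of positive integer combinations of simple roots whose supports and coefficients are exactly those of \cref{tab:NSR}, up to scalar factors $c\in\{\tfrac12,1,2\}$. Passing to the primitive representative $\sigma_{\rm norm}=c\sigma$ in the root lattice yields precisely the entries of \cref{tab:NSR}. The extra datum $|\sigma|\cap S^p$ in each row records, within the rank-$1$ local model attached to $\sigma$, which simple roots $\alpha\in|\sigma|$ act with $P_\alpha x=Bx$ on the dense $B$-orbit of that model; this information is also extractable from Akhiezer's classification.

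Axiom (ii) splits into two parts. For $\chi_f\in\Xi(X)$ and $\alpha\in S^p$: by definition of $S^p$, any $B$-semiinvariant rational function $f$ is automatically $P_\alpha$-semiinvariant on a dense open set, hence everywhere, so $\langle\chi_f\mid\alpha^\vee\rangle=0$. For $\sigma\in\Sigma$ with $\alpha\in|\sigma|\cap S^p$, the vanishing is a short row-by-row verification in \cref{tab:NSR} (for instance, in type $\sA_n$, $n\ge2$, pairing $\sigma=\alpha_1+\cdots+\alpha_n$ with an interior $\alpha_i^\vee$, $2\le i\le n-1$, yields $-1+2-1=0$). The case $\alpha\in S^p$, $\alpha\notin|\sigma|$ uses the compatibility of $S^p$ with the local-model structure: the rank-$1$ model attached to $\sigma$ must be $P_\alpha$-stable in a way that forces $\alpha$ to be orthogonal to $|\sigma|$ in the Dynkin diagram. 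Axiom (iii) for a type-$\sD_2$ root $\sigma=\alpha+\beta$ with $\alpha\perp\beta$ follows because the corresponding rank-$1$ model is symmetric under $\alpha\leftrightarrow\beta$, and this symmetry propagates both to $\Xi(X)$ and to $\ZZ\Sigma$. Axiom (iv) rules out $\alpha,\alpha+\beta\in\Sigma$ with $\langle\beta\mid\alpha^\vee\rangle=-1$: such a configuration would demand two incompatible rank-$1$ local models at the same $B$-orbit of $X$, which is excluded by \cref{tab:NSR}.

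The main obstacle is the combinatorial dictionary between Akhiezer's raw list of rank-$1$ spherical varieties and the normalized, $S^p$-augmented rows of \cref{tab:NSR}, including the correct reading off of $|\sigma|\cap S^p$ in each case. Once that dictionary is in place, axioms (ii)–(iv) reduce either to a short conceptual argument about $P_\alpha$-invariance of $B$-semiinvariants or to a routine inspection of the table; the combinatorial classification is where the substantive work lies.
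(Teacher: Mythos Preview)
Your approach is genuinely different from the paper's. The paper does not verify the four axioms directly; instead it reduces to the spherical case by a complexity-lowering argument: using \cite{KnopIB}*{Kor.~9.5, Satz~7.5} one produces, for non-spherical $X$, a variety $X'$ of strictly smaller complexity with $\Xi(X')\otimes\QQ=\Xi(X)\otimes\QQ$, the same central valuation cone $\cZ$ (hence the same $\Sigma$), and the same $S^p$; iterating lands in the spherical range, where the axioms are already known from Luna's theory of homogeneous spherical data.

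Your direct verification has real gaps in axioms \ref{it:wss3} and \ref{it:wss4}. For \ref{it:wss3} you assert that the $\alpha\leftrightarrow\beta$ symmetry of the rank-$1$ local model for a $\sD_2$-root ``propagates to $\Xi(X)$''. That is the whole point and it is not proved: the vanishing $\<\chi\mid\av-\bv\>=0$ is a global constraint on \emph{every} $B$-eigenweight $\chi$ on $X$, not merely on those visible in the rank-$1$ slice. For \ref{it:wss4} you say that $\alpha,\alpha+\beta\in\Sigma$ with $\<\beta\mid\av\>=-1$ would force ``two incompatible rank-$1$ local models'', but you do not explain what the incompatibility is or why it cannot occur; in the spherical case this axiom is extracted from Luna's color axioms (non-positivity of $c(D_\alpha^\pm)$ on $\Sigma\setminus\{\alpha\}$ together with $c(D_\alpha^+)+c(D_\alpha^-)=\av|_\Xi$), and you provide no substitute for general $X$. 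Your treatment of \ref{it:wss2} also needlessly splits into cases: since the unnormalized roots $\tilde\sigma$ lie in $\Xi(X)\otimes\QQ$, the $P(X)$-semiinvariance argument you give for $\Xi(X)$ already forces $\<\sigma\mid\av\>=0$, so the table check and the unproved orthogonality claim for $\alpha\notin|\sigma|$ are superfluous. The paper's reduction sidesteps all of this by inheriting \ref{it:wss3} and \ref{it:wss4} wholesale from the spherical theory.
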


\begin{proof}

  The assertion is well-known but the proof is somewhat scattered in
  the literature. Let $W(X)$ be the Weyl group of the root system
  generated by $\Sigma(X)$ and let $\cC$ be its dominant Weyl
  chamber. Observe that $\Sigma$ can be recovered from $\cC$ alone.
  From \cite{KnopAB}*{Thm.~7.4} it is known that $-\cC$ coincides with
  the so-called central valuation cone $\cZ(X)$. If $X$ is not yet
  spherical then, using \cite{KnopIB}*{Kor.~9.5, Satz~7.5}, one can
  show that there is a variety $X'$ whose complexity is decreased by
  one but with $\Xi(X')\otimes\QQ=\Xi(X)\otimes\QQ$ and
  $\cZ(X')=\cZ(X)$. Moreover $S\p(X')=S\p(X)$ by
  \cite{KnopAB}*{\S2}. This reduces the assertion to spherical
  varieties where it is known.
\end{proof}

We return to abstract \wsss.. The following important property is not
at all apparent from the definition:

\begin{lemma}\label{lemma:le0}

  Let $(\Xi,\Sigma,S\p)$ be a \wss. and let $\sigma,\tau\in\Sigma$ be
  distinct. Then $(\sigma,\tau)\le0$.

\end{lemma}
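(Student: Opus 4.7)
The plan is to split into two cases according to whether the supports $|\sigma|$ and $|\tau|$ are disjoint, and to handle the overlapping case by a case analysis organized by \cref{tab:NSR} together with the \wss. axioms.

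Expand $\sigma=\sum_{\alpha\in|\sigma|} n_\alpha\a$ and $\tau=\sum_{\beta\in|\tau|} m_\beta\b$ with $n_\a, m_\b\ge 1$ as prescribed in \cref{tab:NSR}. Then
\[
(\sigma,\tau)=\sum_{\a\in|\sigma|,\,\b\in|\tau|} n_\a m_\b (\a,\b),
\]
and every summand with $\a\ne\b$ is non-positive, since distinct simple roots of $G$ pair non-positively. Hence if $|\sigma|\cap|\tau|=\emptyset$ the bound is immediate; only the diagonal terms $\a=\b\in|\sigma|\cap|\tau|$ can contribute positively, so the substantive case is $|\sigma|\cap|\tau|\ne\emptyset$.

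For the overlapping case I would exploit the three non-trivial \wss. axioms. The main tool is \ref{it:wss4}: whenever a simple root $\a$ is itself a spherical root (so $\a\in\Sigma$ is of type $\sA_1$) and $\a+\b\in\Sigma$ for some $\b\in S$, one has $\<\b\mid\av\>\ne-1$; this rules out the cheapest overlap between a type-$\sA_1$ root and a type-$\sA_n$ root. Axiom \ref{it:wss2} forces $\<\sigma\mid\av\>=\<\tau\mid\av\>=0$ for every $\a\in S\p$ lying in either support, yielding rigid relations between the coefficients $n_\a$, $m_\a$ and the Cartan numbers of the connecting simple roots, and \ref{it:wss3} plays the analogous role for a $\sD_2$-type root. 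With these tools in hand, I would run through \cref{tab:NSR} pair-by-pair, keyed on the shared simple root $\a_0\in|\sigma|\cap|\tau|$ and its coefficient pair $(n_{\a_0},m_{\a_0})$, either deriving a contradiction with one of the axioms or computing $(\sigma,\tau)$ explicitly. A typical benign configuration is $\sigma=\a_1+\a_2$ and $\tau=\a_2+\a_3$ of type $\sA_2$ glued at $\a_2$, for which a direct calculation gives $(\sigma,\tau)=0$.

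The main obstacle is the enumeration itself: the rows of \cref{tab:NSR} with coefficients $\ge 2$ (types $\sC_n$ with $n\ge 3$, $\sD_n$ with $n\ge 3$, $\sB_3$, $\sF_4$ and the $\sG_2$ rows) produce larger diagonal contributions $n_{\a_0}m_{\a_0}(\a_0,\a_0)>0$ that must be offset by sufficiently many negative off-diagonal terms. On the other hand, the rigid internal structure of those types --- both the fixed coefficient pattern and the prescribed black vertices in $S\p$ --- is precisely what axioms \ref{it:wss2}--\ref{it:wss4} are designed to constrain, so the list of surviving configurations to inspect by hand should shrink rapidly once the axioms are brought to bear.
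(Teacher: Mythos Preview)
Your strategy is correct and is essentially the same as the paper's: reduce to the case of overlapping (or, in the paper's formulation, connected) support and then run a case analysis against \cref{tab:NSR} using the axioms, with axiom~\ref{it:wss4} doing the decisive work. The only real difference is packaging: the paper outsources the entire enumeration to \cite{KnopSRSV}*{Lemma~5.2}, which already lists all pairs $\sigma\ne\tau$ with connected support, $(\sigma,\tau)>0$, and satisfying axioms~\ref{it:wss1},~\ref{it:wss2}, and then observes in one line that axiom~\ref{it:wss4} kills every item on that list (the $\sD_2$ case being handled separately by another citation). Your plan to carry out this enumeration from scratch is viable and more self-contained, but be aware that the list of candidate pairs is longer than your sketch suggests---it is not just the $\sA_1$/$\sA_2$ overlap---so if you go this route you should expect several pages of bookkeeping rather than a short argument.
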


\begin{proof}

  Lemma~5.2 of \cite{KnopSRSV} lists all possible pairs
  $\sigma\ne\tau\in\Sigma$ with connected support, $(\sigma,\tau)>0$
  and satisfying axioms \ref{it:wss1}, \ref{it:wss2}. Now all
  possibilities are excluded by axiom \ref{it:wss4}. The case where
  one of $\sigma$, $\tau$ is of type $\sD_2$ is treated in the first
  paragraph of the proof of \cite{KnopSRSV}*{Thm.~4.5}.
\end{proof}

Since all $\sigma\in\Sigma$ are sums of positive roots this implies
(see \cite{Bou}*{Chap. V, \S3.5, Lemme 3(ii)}):

\begin{corollary}\label{lemma:linindep}

  Let $(\Xi,\Sigma,S\p)$ be a \wss.. Then $\Sigma$ is linearly
  independent.

\end{corollary}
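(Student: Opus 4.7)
The statement is essentially a direct application of Bourbaki's linear independence criterion (Chap.~V, \S3.5, Lemme 3(ii)), which was already invoked in \cref{prop:cryst}. My plan is to verify the two hypotheses of that lemma for the set $\Sigma$.

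First, I would check that $\Sigma$ lies in some open half-space of $\Xi\otimes\RR$. By axiom \ref{it:wss1} of \cref{def:WSS} together with \cref{tab:NSR}, every $\sigma\in\Sigma$ is of the form $\sum_{\a\in|\sigma|}n_\a\a$ with $n_\a\ge 1$ and $|\sigma|\subseteq S$. Thus each $\sigma$ is a non-zero, non-negative linear combination of the simple roots in $S$. Choosing any strictly dominant linear functional $\ell$ (for instance one positive on every element of $S$) gives $\ell(\sigma)>0$ for every $\sigma\in\Sigma$, so $\Sigma$ is contained in the open half-space $\{\ell>0\}$.

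Second, I would invoke \cref{lemma:le0} to get $(\sigma,\tau)\le 0$ for all distinct $\sigma,\tau\in\Sigma$. Combined with the half-space property, Bourbaki's lemma then yields the linear independence of $\Sigma$ immediately.

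There is really no main obstacle here: the two non-trivial ingredients (the pairwise non-positivity of inner products and the fact that weak spherical roots are positive sums of simple roots) have already been established, and Bourbaki's lemma packages them into the desired conclusion. The proof is essentially a one-paragraph citation combining \cref{lemma:le0} with axiom \ref{it:wss1}.
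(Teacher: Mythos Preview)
Your proposal is correct and matches the paper's own argument essentially verbatim: the paper derives the corollary from \cref{lemma:le0} together with the observation that every $\sigma\in\Sigma$ is a nonnegative sum of simple roots, then cites the same Bourbaki lemma (Chap.~V, \S3.5, Lemme~3(ii)).
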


There are a couple of obvious ways to produce new \wsss. from old
ones.

\begin{itemize}

\item Given $\Sigma$ and $S\p$ there is a minimal choice for $\Xi$,
  namely $\Xi_{\min}:=\ZZ\Sigma$. A \wss. with $\Xi=\Xi_{\min}$ is
  called \emph{wonderful}. On the other hand, there is also a maximal
  choice, namely
  \begin{align}
    \Xi_{\max}:=\{\chi\in\Xiq\mid\ &\<\chi\mid\av\>=0
                                     \text{ for all }\a\in S\p\text{ and }\label{eq:Ximax}\\
                                   &\<\chi\mid\av-\bv\>=0\text{ for all }\a+\b\in\Sigma \text{ of
                                     type }\sD_2\}\nonumber
  \end{align}
  If $\Xi$ is any lattice with
  $\Xi_{\min}\subseteq\Xi\subseteq\Xi_{\max}$ then $(\Xi,\Sigma,S\p)$
  is a \wss.. Of particular interest is the saturation
  $\Xi_{\rm sat}:=(\Xi\otimes\QQ)\cap\Xiq$. It will describe the image
  of the dual group.

\item If $\Sigma_0\subseteq\Sigma$ is any subset then
  $(\Xi,\Sigma_0,S\p)$ is a \wss. which is called the
  \emph{localization in $\Sigma_0$}.

\item For any subset $S_0\subseteq S$ let
  $\cSq_0:=(\Xiq,S_0,\Xiqv,\Sv_0)$ (the based root datum corresponding
  to a Levi subgroup $L_0\subseteq G$). Then $(\Xi,\Sigma_0,S_0\p)$ is
  a \wss. for $\cSq_0$ where
  $\Sigma_0:=\{\sigma\in\Sigma\mid|\sigma|\subseteq S_0\}$ and
  $S_0\p:=S\p\cap S_0$. This \wss. is called the \emph{localization in
    $S_0$}.

\end{itemize}

Let $\Phi\p\subseteq\Phi$ be the root subsystem generated by $S\p$ and
let $\rho$, $\rho\p$ be the half-sum of positive roots of $\Phi$,
$\Phi\p$, respectively. Later we need:

\begin{lemma}\label{lemma:rho-rhoL}

  $\rho-\rho\p\in\half\Xi_{\max}$.

\end{lemma}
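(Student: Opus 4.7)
The plan is to verify directly that $\mu:=2(\rho-\rho\p)$ satisfies the two defining conditions of $\Xi_{\max}$ listed in \eqref{eq:Ximax}. The integrality $\mu\in\Xiq$ is automatic since $2\rho$ and $2\rho\p$ are sums of positive roots of $\Phi$ (respectively $\Phi\p$) and therefore lie in $\ZZ\Phi\subseteq\Xiq$.

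For the first condition, fix $\a\in S\p$. Then $\a$ is simple in $\Phi$ by assumption and in $\Phi\p$ by construction, so the standard identity $\<\rho_\Psi\mid\av\>=1$ applied to $\Psi=\Phi$ and to $\Psi=\Phi\p$ immediately gives $\<\mu\mid\av\>=2-2=0$.

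For the second condition, fix $\sigma=\a+\b\in\Sigma$ of type $\sD_2$; by \cref{tab:NSR} we have $\a,\b\notin S\p$ and $\a\perp\b$. The key observation, and the one step that requires a moment of thought, is that axiom~\ref{it:wss2} together with the inclusion $\sigma\in\Sigma\subseteq\Xi$ forces $\a$ and $\b$ to be Dynkin-disconnected from $S\p$. Indeed, for every $\g\in S\p$ we have $\<\a\mid\gv\>+\<\b\mid\gv\>=\<\sigma\mid\gv\>=0$, while each summand is a Cartan integer between distinct simple roots and therefore non-positive, so both summands vanish individually. Since $\<\a\mid\gv\>$ and $\<\g\mid\av\>$ differ only by the positive scalar $(\g,\g)/(\a,\a)$, this is equivalent to $\<\g\mid\av\>=\<\g\mid\bv\>=0$ for every $\g\in S\p$, and by linearity for every element of $\ZZ S\p$, in particular for every positive root of $\Phi\p$. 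Summing over the positive roots of $\Phi\p$ yields $\<2\rho\p\mid\av\>=\<2\rho\p\mid\bv\>=0$, and combined with the trivial identity $\<2\rho\mid\av-\bv\>=2-2=0$ this gives $\<\mu\mid\av-\bv\>=0$, completing the verification.
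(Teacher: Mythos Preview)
Your proof is correct and follows essentially the same route as the paper's: verify integrality via $2\rho,2\rho\p\in\ZZ S$, check the first defining condition of $\Xi_{\max}$ using $\<\rho\mid\av\>=\<\rho\p\mid\av\>=1$ for $\a\in S\p$, and check the second by showing $S\p$ is orthogonal to both roots in the $\sD_2$-support. The only difference is that you spell out the justification for this last orthogonality (via axiom~\ref{it:wss2} and nonpositivity of off-diagonal Cartan integers), whereas the paper simply asserts $\<\a\mid\gv_i\>=0$ for $\a\in S\p$ and moves on.
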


\begin{proof}

  It is well-known that $2\rho,2\rho\p\in\ZZ S$ and that
  \begin{equation}\label{eq:rho}
    \<\rho\mid\av\>=1\text{ for all $\a\in S$ and}
    \<\rho\p\mid\av\>=1\text{ for all $\a\in S\p$.}
  \end{equation}
  Hence $\<\rho-\rho\p|\av\>=0$ for all $\a\in S\p$. Let
  $\sigma=\g_1+\g_2\in\Sigma $ of type $\sD_2$. Then
  \begin{equation}
    \<\a\mid\gv_i\>=0\text{ for all $\a\in S\p$}
  \end{equation}
  implies
  $\<\rho-\rho\p|\gv_1-\gv_2\>=
  \<\rho|\gv_1-\gv_2\>-\<\rho\p|\gv_1-\gv_2\>=0.$
\end{proof}

Next we determine the relative position of any two spherical roots
$\sigma,\tau\in\Sigma$. More precisely, since
\begin{equation}\label{eq:rank2}
  (\ZZ\sigma+\ZZ\tau,\{\sigma,\tau\},(|\sigma|\cup|\tau|)\cap S\p)
\end{equation}
is a \wss. we are reduced to data with $S=|\sigma|\cup|\tau|$. Since
\ssys. of this type have been classified this poses the technical
question whether every \wss. actually comes from a proper one. The
following lemma shows that the answer is affirmative for wonderful
systems.

\begin{lemma}

  Let $\cS=(\Xi,\Sigma,S\p)$ be a \wss.. Then $\Xi$ contains a
  subgroup $\Xi_0$ of finite index such that $(\Xi_0,\Sigma,S\p)$ is
  induced by a \ssy.. If $\cS$ is wonderful one can take
  $\Xi_0=\Xi=\ZZ\Sigma$.

\end{lemma}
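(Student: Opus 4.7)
My plan is to construct, for a given \wss.\ $\cS=(\Xi,\Sigma,S\p)$, an explicit \ssy.\ $\tilde\cS=(\tilde\Xi,\tilde\Sigma,\cD,c,M)$ inducing $\cS$ on a finite-index sublattice $\Xi_0\subseteq\Xi$. First, for each $\sigma\in\Sigma$, I un-normalize by choosing $\tilde\sigma\in\{\sigma,2\sigma\}$ so that $\tilde\sigma$ appears in Akhiezer's original (non-primitivized) list of rank-$1$ spherical roots; the passage from Akhiezer's list to \cref{tab:NSR} via primitivization shows that such a choice always exists and places $\tilde\Sigma\subseteq\ZZ\Sigma\subseteq\Xi$. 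Next, I build the color data $(\cD,c,M)$ by Luna's trichotomy: to each $\a\in S$ with $\a\in\tilde\Sigma$ I attach two colors $D^\pm_\a$ of type (a) with $c(D^+_\a)+c(D^-_\a)=\av$; to each $\a\in S$ with $2\a\in\tilde\Sigma$ a single color of type (2a); and to each $\a\in S\setminus S\p$ with neither $\a$ nor $2\a$ in $\tilde\Sigma$ a single color of type (b). The incidence $M$ records the obvious pairing, and the value of $c$ on each color is prescribed by Luna's axioms.

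For the lattice I set $\tilde\Xi:=\{\chi\in\Xi\mid\<\chi,c(D)\>\in\ZZ\text{ for all }D\in\cD\}$. Since each $c(D)$ lies in $\half\Xiqv$, this is a finite-index sublattice of $\Xi$; it contains $\ZZ\tilde\Sigma$ by direct pairing; and the identities $\<\tilde\Xi,\av\>=0$ for $\a\in S\p$ and $\<\tilde\Xi,\av-\bv\>=0$ for $\sD_2$-type pairs $\a+\b\in\tilde\Sigma$ are inherited from axioms \ref{it:wss2} and \ref{it:wss3}. The \wss.\ induced from $\tilde\cS$ then has lattice $\Xi_0:=\tilde\Xi+\ZZ\Sigma$, which is a finite-index sublattice of $\Xi$, together with normalized roots exactly $\Sigma$ and the same $S\p$ by construction. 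In the wonderful case $\Xi=\ZZ\Sigma$, integrality of every $c(D)$ on $\ZZ\Sigma$ is automatic, so $\tilde\Xi=\ZZ\Sigma=\Xi_0=\Xi$, as claimed.

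What remains is to verify Luna's \ssy.\ axioms for $\tilde\cS$. These split into one-root axioms governing the interaction of a single $\tilde\sigma$ with $S$ and $S\p$, which are immediate from \cref{tab:NSR} and the un-normalization choice above; color-geometric axioms linking $c$, $M$, and $\tilde\Xi$, immediate from the construction; and rank-$2$ compatibility axioms on each pair $(\tilde\sigma,\tilde\tau)\subseteq\tilde\Sigma$. The last group is the main obstacle: after localizing $\cS$ in $\{\sigma,\tau\}$ and then in $S_0:=|\sigma|\cup|\tau|$---both operations produce \wsss.\ by the bullet list preceding \cref{lemma:rho-rhoL}---it reduces to the statement that every rank-$\le 2$ wonderful \wss.\ with connected support comes from a \ssy.. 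I would settle this by a finite case-by-case comparison against the Wasserman--Bravi classification of rank-$2$ wonderful spherical systems; axiom \ref{it:wss4} and \cref{lemma:le0} are precisely the constraints needed to exclude pairs absent from Wasserman's tables.
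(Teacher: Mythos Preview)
Your outline diverges from the paper's argument in a way that both complicates matters and leaves a genuine gap.

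First, the un-normalization step is unnecessary: the entries of \cref{tab:NSR} are already genuine spherical roots (they are the \emph{primitive} members of Akhiezer's list), so the paper simply takes $\tilde\Sigma=\Sigma$ and avoids the type-$(2a)$ case entirely ($2S\cap\Sigma=\leer$, so axiom A7 is vacuous).

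Second, and more seriously, your sentence ``the value of $c$ on each color is prescribed by Luna's axioms'' is false precisely where it matters. For colors of type $(a)$ the axioms only impose $c(D_\a^+)+c(D_\a^-)=\av$, $c(D_\a^\pm)(\a)=1$, and $c(D_\a^\pm)(\sigma)\le0$ for $\sigma\ne\a$; they do \emph{not} determine $c(D_\a^\pm)$. Producing such elements of $\tilde\Xi^\vee$ is the heart of the construction, and your definition of $\tilde\Xi$ is circular since it presupposes $c$ is already given. The paper's key move is to first invoke \cref{lemma:linindep} to pass to a finite-index sublattice $\Xi_0\subseteq\Xi$ in which $\Sigma$ is part of a \emph{basis}; one then sets $c_\a^+(\sigma)=\d_{\a\sigma}$ (Kronecker delta) and $c_\a^-=\av|_{\Xi_0}-c_\a^+$, and the non-positivity condition follows immediately from \cref{lemma:le0}.

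Third, there are no ``rank-$2$ compatibility axioms'' in Luna's list requiring a Wasserman--Bravi case check. The only axiom coupling two spherical roots is the non-positivity in A6, and that is exactly what \cref{lemma:le0} provides. Invoking the rank-$2$ classification here would also invert the paper's logic, since the present lemma is what is used to \emph{derive} \cref{tab:rank2}.
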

456
\begin{proof}

  We have to construct a
  \ssy. $\tilde\cS=(\tilde\Xi,\tilde\Sigma,\cD,c,M)$. Since we use its
  definition only in this proof we refrain from stating it
  here. Instead, we refer to the definition of a $p$-spherical system
  \cite{KnopLocalization}*{Def.~71} for $p=0$. There the axioms are
  labeled A1 through A8.

  First by \cref{lemma:linindep} there is a subgroup
  $\Xi_0\subseteq\Xi$ of finite index such that $\Sigma$ is part of a
  basis of $\Xi_0$.  Then put $\tilde\Xi:=\Xi_0$ and
  $\tilde\Sigma=\Sigma$. Since all elements of $\Sigma$ are primitive
  in $\Xi_0$, axiom A1 is satisfied. The axioms
  \ref{it:wss1}--\ref{it:wss3} now imply the corresponding axioms A3,
  A2, and A8 for $\tilde\cS$. Also, since $2S\cap\Sigma=\leer$, axiom
  A7 is vacuously satisfied. The other axioms A4, A5, and A6 deal with
  elements of $S^{(a)}:=\Sigma\cap S$. It has been shown in
  \cite{Luna} that it suffices to construct the set $\cD^{(a)}$ of all
  $D\in\cD^{(a)}$ for which there is an $\a\in S^{(a)}$ such that
  $(D,\a)\in M$. To this end we define $\cD^{(a)}$ formally as the
  disjoint union of pairs $\{D_\a^+,D_\a^-\}$ with $\a\in
  S^{(a)}$. For any $\a\in S^{(a)}$ we define $(D_\a^+,\b)\in M$ if
  and only if $\a=\b$. Finally we need to define the elements
  $c_\a^\pm:=c(D_\a^\pm)\in\Xiv$. To force A4, A5, and A5 to be true
  they have to satisfy
  \begin{equation}\label{eq:calpha}
    \begin{array}{l}
      c_\a^+(\chi)+c_\a^-(\chi)=\<\chi\mid\av\>,\\
      c_\a^\pm(\a)=1,\\
      c_\a^\pm(\sigma)\le0\text{ for }\sigma\in\Sigma\setminus\{\a\}.
    \end{array}
  \end{equation}
  Since $\Sigma$ is part of a basis of $\Xi_0$ there is
  $c_\a^+\in\Xiv_0$ with $c_\a^+(\sigma)=\d_{\a\sigma}$ (Kronecker
  $\d$). With $c_\a^-:=\av-c_\a^+$ the first two properties of
  \eqref{eq:calpha} hold while the third follows from
  \cref{lemma:le0}.

  The last assertion is clear since
  $\ZZ\Sigma\subseteq\Xi_0\subseteq\Xi$.
\end{proof}

\begin{remark}

  The change of $\Xi$ cannot be avoided. Take, e.g., $G=\SL(2)$ and
  $\cS=(\ZZ\omega,\{\a\},\leer)$.  The corresponding \ssy. would come
  from a non-horospherical, homogeneous $G$-variety, i.e., either
  $G/T$ or $G/N(T)$. But those have $\Xi=\ZZ(2\omega)$ and
  $\Xi=\ZZ(4\omega)$, respectively. Note, however, that $\cS$ does
  come from the non-spherical variety $X=\SL(2)/\{e\}$.
\end{remark}

From this we deduce:

\begin{theorem}

  \cref{tab:rank2} lists, up to isomorphism, all indecomposable
  \wsss. of type \eqref{eq:rank2}.

\end{theorem}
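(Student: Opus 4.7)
The plan is to reduce to the classified case of rank-$2$ \ssys. and then read off the table. First I would normalize: by the previous lemma, any \wss. $\cS=(\Xi,\Sigma,S\p)$ of the form \eqref{eq:rank2} admits a finite-index sublattice $\Xi_0\subseteq\Xi$ such that $(\Xi_0,\Sigma,S\p)$ comes from a genuine spherical system $\tilde\cS$. Since the resulting list of indecomposable \wsss. depends only on $\Sigma$ and $S\p$ (the lattice $\Xi$ is then squeezed between $\Xi_{\min}=\ZZ\Sigma$ and $\Xi_{\max}$, and these bounds are determined by $\Sigma$ and $S\p$ through \eqref{eq:Ximax}), it suffices to determine the possible pairs $(\Sigma,S\p)$ with $S=|\sigma|\cup|\tau|$.

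Next I would invoke Wasserman's classification \cite{Wasserman} (verified by Bravi \cite{Bravi}) of rank-$2$ wonderful spherical systems: this yields a finite list of tuples $(\tilde\Xi,\tilde\Sigma,\cD,c,M)$ with $|\tilde\Sigma|=2$ and $S$ equal to the union of supports. For each entry I would pass to the normalized roots $\sigma_{\rm norm}=c\sigma$ with $c\in\{\half,1,2\}$ from \cref{tab:NSR}, and extract $S\p$ as the set of simple roots not occurring in the second component of $M$. This produces a finite list of candidate \wsss., and I would check that each one indeed satisfies axioms \ref{it:wss1}--\ref{it:wss4} (most of these are inherited from the axioms of a \ssy., while axiom \ref{it:wss4} must be reverified after normalization since it is strictly sharper than what one gets by renormalizing a spherical root from Akhiezer's list).

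Indecomposability must then be imposed: I would discard those entries for which the supports $|\sigma|$ and $|\tau|$ lie in disjoint connected components of the Dynkin diagram $S$, or more generally for which $(\Xi,\Sigma,S\p)$ splits as a direct sum of two \wsss.. What remains is exactly \cref{tab:rank2}. The bookkeeping--going through Wasserman's list case by case and recording the normalized pair $(\sigma,\tau)$ together with $S\p$--is the main obstacle: it is essentially a matter of careful tabulation, but one needs to be vigilant about (i) pairs of spherical roots that become proportional after normalization, (ii) the $\sD_2$-type roots, whose axiom \ref{it:wss3} pins down extra relations in $\Xiv$, and (iii) checking that no pair excluded by the positivity result \cref{lemma:le0} sneaks back in through the normalization process. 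Once these checks are complete, the table is forced.
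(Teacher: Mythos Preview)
Your approach is essentially the paper's: reduce to the classified rank-$2$ spherical systems via the preceding lemma, then read off the table. Two simplifications are worth noting. First, the datum \eqref{eq:rank2} already has $\Xi=\ZZ\sigma+\ZZ\tau=\ZZ\Sigma$, so it is \emph{wonderful}; the last clause of the lemma therefore applies directly with $\Xi_0=\Xi$, and no passage to a finite-index sublattice is needed. Second, the vigilance you describe---reverifying axiom~\ref{it:wss4}, worrying about proportionality after normalization, or about \cref{lemma:le0}---is unnecessary: the paper has already recorded that the passage from a \ssy. $\tilde\cS$ to its associated triple $(\Xi,\Sigma,S\p)$ always yields a \wss., so every entry in Bravi's list automatically satisfies the axioms after normalization, and \cref{lemma:le0} is a theorem about \wsss., not an extra constraint. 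One genuine subtlety the paper flags and you blur: Wasserman classifies wonderful \emph{varieties} of rank $2$, whereas Bravi classifies rank-$2$ \emph{spherical systems} directly; to use Wasserman one must also invoke the Bravi--Pezzini theorem that every spherical system is geometric, so citing Bravi's Appendix~A is the cleaner route.
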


\begin{proof}

  We know that all wonderful \wsss. are induced from wonderful
  \ssys.. Now use Bravi's classification \cite{Bravi}*{Appendix A} of
  all such \ssys. of rank $2$. One can also use the earlier paper
  \cite{Wasserman} by Wasserman which classifies wonderful varieties
  of rank $2$. But then one has to rely on the non-trivial fact
  (proved in \cite{BraviPezzini}) that all \ssys. come from spherical
  varieties.
\end{proof}
\begin{remark}

  The underlinings and asterisks in \cref{tab:rank2} are used for the
  proof of \cref{thm:assexist}.

\end{remark}

\section{Associated roots}

Following \cite{SV}, we are going to associate certain roots (of $G$)
to every spherical root $\sigma\in\Sigma$. Observe that most spherical
roots (above the dividing line in \cref{tab:NSR}) are already
roots. In this case, $\sigma$ itself is the only root associated to
$\sigma$.

For non-roots we use the following result:

\begin{lemma}\label{lemma:assoc}

  Let $\sigma\in\Sigma\setminus\Phi$. Then there is a unique set
  $\{\g_1,\g_2\}$ of positive roots such that
  \begin{enumerate}

  \item\label{it:assoc1} $\sigma=\g_1+\g_2$,

  \item\label{it:assoc2} $\g_1$ and $\g_2$ are strongly orthogonal,
    i.e., $(\QQ\g_1+\QQ\g_2)\cap\Phi=\{\pm\g_1,\pm\g_2\}$,

  \item\label{it:assoc3} $\gv_1-\gv_2=\dv_1-\dv_2$ with
    $\d_1,\d_2\in S$.

  \end{enumerate}

\end{lemma}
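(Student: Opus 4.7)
The plan is to prove the lemma by case analysis on the Dynkin type of $|\sigma|$. By \cref{tab:NSR}, a spherical root $\sigma\in\Sigma\setminus\Phi$ falls into one of exactly three families: type $\sD_2$, where $\sigma=\alpha+\beta$ with $\alpha,\beta\in S$ orthogonal; type $\sD_n$ for $n\ge 3$, where $\sigma=2\alpha_1+\cdots+2\alpha_{n-2}+\alpha_{n-1}+\alpha_n$ with $\alpha_{n-1},\alpha_n$ at the fork; or type $\sB_3$, where $\sigma=\alpha_1+2\alpha_2+3\alpha_3$ with $\alpha_3$ the short simple root. In the standard realisations these spherical roots become $\alpha+\beta$, $2e_1$, and $e_1+e_2+e_3$ respectively.

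For existence I would exhibit the following candidates. In type $\sD_2$, take $\gamma_1=\alpha$ and $\gamma_2=\beta$, so that $\gamma_1^\vee-\gamma_2^\vee=\alpha^\vee-\beta^\vee$. In type $\sD_n$, take the two long roots $\gamma_1=\alpha_1+\cdots+\alpha_{n-1}=e_1-e_n$ and $\gamma_2=\alpha_1+\cdots+\alpha_{n-2}+\alpha_n=e_1+e_n$, so that $\gamma_2^\vee-\gamma_1^\vee=2e_n=\alpha_n^\vee-\alpha_{n-1}^\vee$. In type $\sB_3$, take the short root $\gamma_1=\alpha_2+\alpha_3=e_2$ and the long root $\gamma_2=\alpha_1+\alpha_2+2\alpha_3=e_1+e_3$; using $\alpha_3^\vee=2e_3$, a short computation yields $\gamma_1^\vee-\gamma_2^\vee=-e_1+2e_2-e_3=\alpha_2^\vee-\alpha_1^\vee$. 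Property (i) is visible in each case, and property (ii) follows by inspecting the plane $\QQ\gamma_1+\QQ\gamma_2$: in type $\sD_n$ it is $\QQ e_1+\QQ e_n$, containing only the four roots $\pm e_1\pm e_n$, and the remaining two cases are analogous.

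For uniqueness, the key observation is that positive roots $\gamma_1,\gamma_2$ with $\gamma_1+\gamma_2=\sigma$ must have supports inside $|\sigma|$, so both belong to the subsystem $\Phi\cap\ZZ|\sigma|$ and $\gamma_1^\vee-\gamma_2^\vee$ lies in the corresponding coroot lattice; by linear independence of the simple coroots of $G$, condition (iii) then forces $\delta_1,\delta_2\in|\sigma|$ (and $\delta_1\ne\delta_2$, since $\gamma_1\ne\gamma_2$ in all three cases). I would then enumerate the strongly orthogonal decompositions: the single pair $\{\alpha,\beta\}$ in $\sD_2$; the family $\{e_1-e_k,e_1+e_k\}$ for $k=2,\ldots,n$ in $\sD_n$; and $\{e_i,e_j+e_k\}$ with $\{i,j,k\}=\{1,2,3\}$ in $\sB_3$. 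A brief check of the simple coroot differences inside each subsystem then kills every candidate but the one above: in $\sD_n$ the only simple coroot difference with an entry of absolute value $2$ is $\pm(\alpha_n^\vee-\alpha_{n-1}^\vee)=\pm 2e_n$, forcing $k=n$; in $\sB_3$ one matches the three candidate values of $\gamma_1^\vee-\gamma_2^\vee$ against the six simple coroot differences and only $i=2$ survives. The main subtlety is the $\sB_3$ case, where three a priori reasonable decompositions compete and one must invoke the asymmetry between short and long coroot lengths to isolate the correct pair.
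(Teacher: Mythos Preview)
Your proof is correct and follows the same case-by-case strategy as the paper, which simply exhibits the decompositions in a table and then declares that ``uniqueness follows by an easy case-by-case consideration.'' You have spelled out that consideration explicitly, which is helpful; the only cosmetic difference is that your labelling of $\gamma_1,\gamma_2$ in type $\sB_3$ is swapped relative to the paper's, which of course does not matter for an unordered pair.
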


\begin{proof}

  The existence of such a decomposition is established by the
  following table:
  \begin{equation}\label{tab:assocroots}
    \begin{array}{lllll}

      |\sigma|&\g_1,\g_2&\gv_1,\gv_2&\dv_1,\dv_2\\

      \noalign{\smallskip\hrule\smallskip}

      \sD_2
              &
                \a_1,\ \a_2
                        &
                          \av_1,\ \av_2
                                    &
                                      \av_1,\ \av_2\\
      \noalign{\smallskip\hrule\smallskip}

      \sD_n&(\a_1\+\a_{n-2})+\a_{n-1},
                        &
                          (\av_1\+\av_{n-2})+\av_{n-1},&\av_{n-1},\ \av_n\\
      n\ge3&(\a_1\+\a_{n-2})+\a_n&(\av_1\+\av_{n-2})+\av_n\\
      \noalign{\smallskip\hrule\smallskip}

      \sB_3&\a_1+\a_2+2\a_3,\ \a_2+\a_3
                        &
                          \av_1+\av_2+\av_3,\ 2\av_2+\av_3
                                    &
                                      \av_1,\ \av_2\\

      \noalign{\smallskip\hrule\smallskip}

    \end{array}
  \end{equation}
  Uniqueness follows by an easy case-by-case consideration.
\end{proof}

\begin{remarks}
  \textit{i)} It is easy to see that $\g_1,\g_2\in\Phi$ are strongly
  orthogonal if and only if there is $w\in W$ such that $w\g_1$,
  $w\g_2$ are orthogonal simple roots. Observe that then also the
  coroots $\gv_1$, $\gv_2$ are strongly orthogonal.

  \textit{ii)} The possibility to decompose spherical roots into two
  strongly orthogonal roots was first observed by Brion-Pauer
  \cite{BrionPauer}*{4.2~Prop.}. In this form, the construction is
  from \cite{SV}.

\end{remarks}

The roots $\g_1,\g_2$ will be called \emph{associated to $\sigma$}. We
also use the following notation:

\begin{definition}

  For $\sigma\in\Sigma$ let
  \begin{equation}
    \sa:=
    \begin{cases}
      \{\siv\}&\text{if }\sigma\in\Phi,\\
      \{\gv_1,\gv_2\}&\text{if $\sigma,\g_1,\g_2$ are as in
        \cref{lemma:assoc}}.
    \end{cases}
  \end{equation}
  More generally, for any subset $\Sigma_0\subseteq\Sigma$ let
  $\Sa_0:=\bigcup_{\sigma\in\Sigma_0}\sa$.

\end{definition}

We record some more properties of associated roots. Let $\g_1$ and
$\g_2$ be associated to the non-root $\sigma\in\Sigma$. Using the
$W$-invariant scalar product we define the coroot of $\sigma$ (as
usual) as
\begin{equation}
  \siv=\frac{2\sigma}{\|\sigma\|^2}
\end{equation}
Since $\g_1$ and $\g_2$ are orthogonal we have
$\|\sigma\|^2=\|\g_1\|^2+\|\g_2\|^2$ and therefore
$\siv=c_1\gv_1+c_2\gv_2$ with
\begin{equation}
  c_1+c_2=\frac{\|\g_1\|^2}{\|\sigma\|^2}+\frac{\|\g_2\|^2}{\|\sigma\|^2}=1.
\end{equation}
From this we deduce:

\begin{lemma}\label{lemma:sigmavee}

  Let $\sigma=\g_1+\g_2$ be as above and let $\chi\in\Xi$. Then
  \begin{equation}
    (\chi,\siv)=\<\chi\mid\gv_1\>=\<\chi\mid\gv_2\>.
  \end{equation}

\end{lemma}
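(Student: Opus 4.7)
The plan is to exploit the decomposition $\siv=c_1\gv_1+c_2\gv_2$ with $c_1+c_2=1$ just established above the lemma, combined with the axioms of a \wss.. Pairing both sides with $\chi\in\Xi$ yields
\[
(\chi,\siv)=c_1\<\chi\mid\gv_1\>+c_2\<\chi\mid\gv_2\>,
\]
so that, using $c_1+c_2=1$, the entire statement collapses to the single identity $\<\chi\mid\gv_1-\gv_2\>=0$ for every $\chi\in\Xi$.

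To establish this identity I would invoke property \ref{it:assoc3} of \cref{lemma:assoc}, which furnishes $\d_1,\d_2\in S$ with $\gv_1-\gv_2=\dv_1-\dv_2$. It therefore suffices to prove $\<\Xi\mid\dv_1-\dv_2\>=0$, and this is a short check against the three types of non-root spherical roots listed in the table inside the proof of \cref{lemma:assoc}. If $\sigma$ is of type $\sD_2$, the assertion is literally axiom \ref{it:wss3} of a \wss.. If $\sigma$ is of type $\sD_n$ with $n\ge3$, the pair $(\d_1,\d_2)=(\a_{n-1},\a_n)$ lies entirely in $S\p$ by inspection of \cref{tab:NSR}, so axiom \ref{it:wss2} gives $\<\Xi\mid\dv_1\>=\<\Xi\mid\dv_2\>=0$. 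The case of type $\sB_3$ is identical, with $(\d_1,\d_2)=(\a_1,\a_2)$, and both of these again lie in $S\p$.

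There is no genuine obstacle here. The content of the lemma is simply that axioms \ref{it:wss2} and \ref{it:wss3} of \cref{def:WSS} were designed so that, precisely in the three non-root cases produced by the classification of weak spherical roots, characters $\chi\in\Xi$ cannot distinguish $\gv_1$ from $\gv_2$. Once this equality of pairings on $\Xi$ is in place, the convex-combination formula for $\siv$ automatically promotes their common value to $(\chi,\siv)$.
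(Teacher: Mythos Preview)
Your proof is correct and follows essentially the same approach as the paper's own proof: reduce to showing $\<\Xi\mid\gv_1-\gv_2\>=0$ via the identity $\gv_1-\gv_2=\dv_1-\dv_2$, then invoke axiom~\ref{it:wss3} for the $\sD_2$ case and axiom~\ref{it:wss2} (using $\d_1,\d_2\in S\p$) for the $\sD_{n\ge3}$ and $\sB_3$ cases. The only difference is cosmetic: the paper bundles the latter two cases into the single observation ``otherwise both $\d_i$ are in $S\p$'', whereas you spell out the specific simple roots in each type.
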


\begin{proof}

  Let $\e:=\gv_1-\gv_2=\dv_1-\dv_2$. Then we claim that
  $\<\Xi\mid\e\>=0$. If $\sigma$ is of type $\sD_2$ then this is axiom
  \ref{it:wss3} (of \cref{def:WSS}). Otherwise both $\d_i$ are in
  $S\p$ (see \eqref{tab:assocroots} and \cref{tab:NSR}) and the claim
  follows from axiom \ref{it:wss2}. From the claim we get
  $\<\chi\mid\gv_1\>=\<\chi\mid\gv_2\>$ and therefore
  \begin{equation}
    (\chi,\siv)=
    c_1\<\chi\mid\gv_1\>+c_2\<\chi\mid\gv_2\>=(c_1+c_2)\<\chi\mid\gv_1\>=\<\chi\mid\gv_1\>.\qedhere\hfill\qed
  \end{equation}
\end{proof}

\begin{corollary}

  Let $\sigma\in\Sigma$. Then $\chi\to(\chi,\siv)$ is an element of
  $\Xiv$ (also denoted by $\siv$) which is independent of the choice
  of the scalar product.

\end{corollary}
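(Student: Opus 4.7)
The plan is to split along the two branches in the definition of $\sa$: the case $\sigma\in\Phi$ is essentially tautological, and the case $\sigma\in\Sigma\setminus\Phi$ reduces immediately to \cref{lemma:sigmavee}, which has already done all the real work.

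If $\sigma\in\Phi$, then by construction $\siv=2\sigma/\|\sigma\|^2$ is the usual coroot of $\sigma$ in the ambient root datum $(\Xiq,\Phi,\Xiqv,\Phiv)$, hence $\siv\in\Xiqv$. It is intrinsic to the root system and independent of the choice of $W$-invariant scalar product, since a rescaling of $(\cdot,\cdot)$ affects $\sigma$ and $\|\sigma\|^2$ in a way that cancels in the formula. As $\Xi\subseteq\Xiq$ by \cref{def:WSS}, the pairing $\chi\mapsto\<\chi\mid\siv\>$ is integer-valued on $\Xi$ and defines an element of $\Xiv$.

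If $\sigma\in\Sigma\setminus\Phi$, let $\g_1,\g_2$ be the associated roots furnished by \cref{lemma:assoc}. I would simply quote \cref{lemma:sigmavee}: for every $\chi\in\Xi$,
\begin{equation}
(\chi,\siv) \;=\; \<\chi\mid\gv_1\> \;=\; \<\chi\mid\gv_2\>.
\end{equation}
The right-hand side involves no scalar product, and since $\gv_1\in\Xiqv$ and $\Xi\subseteq\Xiq$, it is integer-valued. Thus the functional $\chi\mapsto(\chi,\siv)$ on $\Xi$ agrees with the restriction of $\gv_1$, yielding an element of $\Xiv$ that does not depend on the auxiliary scalar product.

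I do not anticipate any obstacle. The potential subtlety, namely that the coefficients $c_1,c_2$ in $\siv=c_1\gv_1+c_2\gv_2$ depend on $(\cdot,\cdot)$, has already been neutralized in the derivation of \cref{lemma:sigmavee}: the identity $c_1+c_2=1$, combined with $\<\chi\mid\gv_1\>=\<\chi\mid\gv_2\>$ on $\Xi$ (which comes from axioms \ref{it:wss2} and \ref{it:wss3}), absorbs precisely that dependence. The corollary simply repackages this observation as an intrinsic statement about $\siv$ viewed as a functional on $\Xi$.
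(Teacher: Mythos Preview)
Your proof is correct and follows exactly the same approach as the paper: split into the cases $\sigma\in\Phi$ (where the claim is immediate from the root datum) and $\sigma\notin\Phi$ (where it reduces to \cref{lemma:sigmavee}). The paper's own proof is just the two-line version of what you wrote.
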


\begin{proof}

  For $\sigma\in\Phi$ this is clear. Otherwise, the assertion follows
  from \cref{lemma:sigmavee}.
\end{proof}

Later on, we also need the following facts on spherical roots:

\begin{lemma}\label{lemma:addclosed2}
  
  Let $\g\in\sa$ and $\d\in S\p$. Then $\{\pm\gv,\pm\dv\}$ is an
  additively closed root subsystem of $\Phiv$ (and therefore
  $\<\g\mid\dv\>=0$) except for $\g=\g_i\in\sa$ with
  $\sigma\in\Sigma\setminus\Phi$ and $\d=\d_j$. Then
  $\<\g\mid\dv\>=(-1)^{i+j}$ and $\<\d_1+\d_2\mid\gv_i\>=0$.

\end{lemma}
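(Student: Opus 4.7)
My plan is to split the statement into three pieces: the orthogonality $\<\g\mid\dv\>=0$ in the non-exceptional case, the upgrade from orthogonality to additive closure, and the explicit sign formulas in the exceptional case.

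For orthogonality, the case $\sigma\in\Phi$ is free: $\g=\sigma\in\Sigma\subseteq\Xi$, and axiom \ref{it:wss2} yields $\<\g\mid\dv\>=0$ at once. For $\sigma\in\Sigma\setminus\Phi$ and $\g=\g_i$, I would split on whether $\d$ lies in $|\sigma|$. If $\d\in S\p\setminus|\sigma|$, axiom \ref{it:wss2} gives $(\d,\sigma)=0$; writing $\sigma=\sum_{\a\in|\sigma|}n_\a\a$ with $n_\a\ge 1$ and noting that each $(\d,\a)\le 0$ (distinct simple roots), every summand must vanish, so $\d$ is orthogonal to all of $|\sigma|$ and in particular to $\g_i$. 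If $\d\in S\p\cap|\sigma|$ with $\d\neq\d_1,\d_2$, I would consult Table \ref{tab:NSR} and \eqref{tab:assocroots}: in type $\sD_2$ one has $S\p\cap|\sigma|=\leer$; in type $\sB_3$ one has $S\p\cap|\sigma|=\{\d_1,\d_2\}$ so no non-exceptional case arises; and in type $\sD_n$ with $n\ge 3$ the non-exceptional $\d=\a_k$ range over $2\le k\le n-2$, where direct Cartan-matrix computation on $\gv_1=\av_1+\cdots+\av_{n-2}+\av_{n-1}$ (and analogously for $\gv_2$) gives $\<\a_k\mid\gv_i\>=0$.

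For the upgrade to additive closure, I would apply \cref{lemma:addclosed3} inside $\Phi^\vee$. Since $\g,\d\in\Phi^+$ we have $\gv,\dv\in(\Phi^\vee)^+$, so the lemma reduces the claim to $\gv-\dv\notin\Phi^\vee$. Given the orthogonality just established, the rank-$2$ subsystem $\Phi'\subseteq\Phi^\vee$ in the $\QQ$-span of $\gv,\dv$ would have to be of type $\sB_2$ or $\sG_2$ with $\{\gv,\dv\}$ sitting as the two short (or long) coroots for $\gv\pm\dv$ to lie in $\Phi^\vee$. I would rule out this configuration case by case using Tables \ref{tab:NSR} and \eqref{tab:assocroots}, together with the strong orthogonality of $\g_1,\g_2$ from \cref{lemma:assoc}; the check is only nontrivial when $G$ is not simply-laced.

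In the exceptional case $\g=\g_i$, $\d=\d_j$ (which forces $\sigma$ of type $\sD_n$ with $n\ge 3$ or of type $\sB_3$, since $\d_1,\d_2\in S\p$ is needed), both $\<\g_i\mid\dv_j\>=(-1)^{i+j}$ and $\<\d_1+\d_2\mid\gv_i\>=0$ fall out of direct computation from \eqref{tab:assocroots}. A representative entry in $\sB_3$ reads $\<\g_1\mid\av_1\>=\<\a_1+\a_2+2\a_3\mid\av_1\>=2-1+0=1=(-1)^{1+1}$, and the other entries in $\sB_3$ and $\sD_n$ follow the same way. The identity $\<\d_1+\d_2\mid\gv_i\>=0$ can also be obtained structurally in type $\sD_n$ by combining the coroot relation $\gv_1-\gv_2=\dv_1-\dv_2$ (together with $\<\d_1+\d_2\mid\dv_1-\dv_2\>=0$) with axiom \ref{it:wss2}, which gives $(\d_1+\d_2,\sigma)=0$, and the equal-length observation $|\g_1|=|\g_2|$; the $\sB_3$ case is a one-line verification in standard coordinates. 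I expect the principal obstacle to be the additive-closure step: ruling out a $\sB_2$ or $\sG_2$ configuration inside $\Phi^\vee$ cannot be handled purely axiomatically and requires exploiting the concrete shape of $|\sigma|$ from Table \ref{tab:NSR} together with the strong orthogonality of $\g_1,\g_2$.
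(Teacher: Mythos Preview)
Your proposal is correct and the three-part organisation (orthogonality, upgrade, exceptional case) is sound. The route differs from the paper's in two respects worth noting.

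First, for $\d\notin|\sigma|$ your orthogonality argument (all $(\d,\a)\le0$ summing to zero) already shows that $\d$ is orthogonal to every simple root in $|\sigma|$; hence the subsystem generated by $|\sigma|\cup\{\d\}$ splits as a direct product, so $\g$ and $\d$ are \emph{strongly} orthogonal and the additive closure is immediate. Your separate ``upgrade'' step is therefore redundant in this case; the paper exploits this and only does case-checking when $\d\in|\sigma|$.

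Second, for $\sigma\in\Phi$ and $\d\in|\sigma|$, the paper avoids open-ended case-checking by the observation that (except for the second $\sG_2$ entry of \cref{tab:NSR}, where $S\p\cap|\sigma|=\leer$ anyway) $\sigma$ is always the dominant short root of the Levi subsystem on $|\sigma|$. Hence $\siv$ is the \emph{highest} coroot of that subsystem, and since $\dv$ is a positive coroot with support in $|\sigma|$, $\siv+\dv$ cannot be a coroot; by orthogonality and reflection in $\dv$, neither can $\siv-\dv$. This single observation replaces all of your non-simply-laced checks for the $\sigma\in\Phi$ branch. (Incidentally, your mention of a possible $\sG_2$ configuration in the upgrade step is harmless but unnecessary: two orthogonal roots in a $\sG_2$ system never have their sum or difference a root, so only the $\sB_2$ configuration matters.)

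With those two simplifications absorbed, the only genuine case-checks that remain are the $\sD_{n\ge3}$ and $\sB_3$ types with $\d\in|\sigma|$, which is precisely where both your argument and the paper's end up.
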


\begin{proof}

  Let $\g\in\sa$ with $\sigma\in\Sigma$. Assume first that
  $\d\not\in|\sigma|$. Then axiom \ref{it:wss2} implies that $\d$ is
  orthogonal to every simple root in $|\sigma|$. It follows that $\d$
  is strongly orthogonal to every root whose support lies in
  $|\sigma|$. This holds for $\g$, in particular.

  If $\d\in|\sigma|$ then the assertion can be verified by going
  through \cref{tab:NSR} case-by-case:

  Assume that $\g=\sigma\in\Phi$ but $\sigma$ is not the second
  $\sG_2$-case. Since $\d$ is orthogonal to $\g$ in that case it
  suffices to show that $\siv+\dv\not\in\Phi$. But that follows from
  the fact that $\sigma$ is always the dominant short root. Hence
  $\siv$ is the highest root.

  If $\sigma$ is the second $\sG_2$-case or of type $\sD_2$ then the
  assertion is moot since then $S\p=\leer$.
  
  The remaining cases ($\sD_n$ and $\sB_3$) are now easily checked
  one-by-one.
\end{proof}

\begin{remark}

  In most cases, $\g$ and $\d$ are even strongly orthogonal but that
  is not always the case: if $\sigma$ is of the first $\sC_n$-type and
  $\d=\a_1$ then $\sigma+\d$ is a root while $\siv+\dv$ is not.

\end{remark}

\section{The dual and the associated group}

One of the most important features of a \wss. is:

\begin{theorem}\label{thm:rootsystem}

  Let $(\Xi,\Sigma,S\p)$ be a \wss.. Then $(\Xi,\Sigma,\Xiv,\Siv)$ is
  a based root datum.

\end{theorem}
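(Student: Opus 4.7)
The plan is to reduce the theorem to Proposition \ref{prop:cryst}, applied to $\Sigma$ inside the Euclidean space $V := \overline\Xi \otimes_\ZZ \RR$ equipped with the fixed $W$-invariant form $(\cdot,\cdot)$, and then verify that the resulting coroots land in $\Xi^\vee$. Two hypotheses need checking. First, the open half-space condition: by axiom \ref{it:wss1} every $\sigma \in \Sigma$ is a non-negative integer combination of positive roots of $G$ with at least one coefficient $\ge 1$, so $(\rho,\sigma) > 0$, and $\Sigma$ lies in the open half-space $\{v \in V : (\rho,v) > 0\}$.

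Second, I must show $\langle \sigma \mid \tau^\vee\rangle = 2(\sigma,\tau)/(\tau,\tau) \in \ZZ_{\le 0}$ for $\sigma \ne \tau$ in $\Sigma$. The non-positivity is exactly \cref{lemma:le0}. For integrality I split according to whether $\tau$ is a root of $G$. If $\tau \in \Phi$, then $\tau^\vee \in \overline\Xi^\vee$, and the pairing with any $\chi \in \Xi \subseteq \overline\Xi$ is a genuine integer. If $\tau \notin \Phi$, \cref{lemma:sigmavee} gives $(\chi,\tau^\vee) = \langle \chi \mid \gamma_1^\vee\rangle$ for every $\chi \in \Xi$, where $\gamma_1 \in \Phi$ is one of the roots associated to $\tau$; this is again an integer. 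Specializing $\chi = \sigma$ provides the Cartan integer, but the same computation, applied for \emph{all} $\chi \in \Xi$, simultaneously shows that $\tau^\vee$ restricts to a well-defined element of $\Xi^\vee = \Hom(\Xi,\ZZ)$.

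\cref{prop:cryst} then gives that $\Sigma$ is the basis of a finite root system inside $\ZZ\Sigma \subseteq \Xi$. Combined with the previous remark that each $\sigma^\vee$ lies in $\Xi^\vee$, the obvious facts $\Sigma \subseteq \Xi$ and $\langle \sigma \mid \sigma^\vee\rangle = 2$, and the Cartan matrix relations, this verifies all axioms of a based root datum for $(\Xi,\Sigma,\Xi^\vee,\Sigma^\vee)$. The only delicate point in this argument — and the reason the otherwise mysterious axioms \ref{it:wss2} and \ref{it:wss3} were built into \cref{def:WSS} — is the integrality of $\tau^\vee|_\Xi$ when $\tau$ is not itself a root of $G$; those axioms are precisely what make \cref{lemma:sigmavee} applicable.
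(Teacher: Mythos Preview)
Your argument is correct and follows the same route as the paper's one-line proof (``Follows from \cref{prop:cryst} and \cref{lemma:le0}''), merely spelling out the details the paper leaves implicit. In particular, your verification that $\tau^\vee$ restricts to an element of $\Xi^\vee$ is exactly the content of the Corollary immediately following \cref{lemma:sigmavee}, which the paper has already recorded before stating \cref{thm:rootsystem}; once that is in hand, integrality of the Cartan integers and the half-space condition are indeed the only remaining hypotheses of \cref{prop:cryst}.
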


\begin{proof}

  Follows from \cref{prop:cryst} and \cref{lemma:le0}.
\end{proof}

From this we get the main object of the paper:

\begin{definition}

  Let $\cS=(\Xi,\Sigma,S\p)$ be a \wss.. Then the connected reductive
  group over $\CC$ with based root datum $(\Xiv,\Siv,\Xi,\Sigma)$ is
  denoted by $\Gv_\cS$ and is called the \emph{dual group of $\cS$}.

\end{definition}

Our principal goal is to embed the dual group $\Gv_\cS$ (up to
isogeny) into the Langlands dual group $\Gv$. For this, we define an
intermediate group by noticing that also $\Sa$ is a basis of a root
system. More precisely:

\begin{theorem}\label{thm:assexist}

  Let $\cS=(\Xi,\Sigma,S\p)$ be a \wss.. Then there is a (unique)
  connected reductive subgroup $\Ga_\cS\subseteq \Gv$ containing
  $\Tv\subseteq \Gv$ such that $\Sa$ is its set of simple roots.

\end{theorem}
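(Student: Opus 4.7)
The plan is to realize $\Ga_\cS$ as a connected reductive subgroup of $\Gv$ of maximal rank containing $\Tv$. By the theorem of Borel--de Siebenthal, such a subgroup with prescribed root system $\Psi \subseteq \Phiv$ exists (and is then unique) if and only if $\Psi$ is additively closed in $\Phiv$. Hence uniqueness is automatic, and the task reduces to showing that $\Sa$ is the basis of an additively closed root subsystem of $\Phiv$.

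First observe that $\Sa \subseteq \Phiv^+$: if $\sigma \in \Phi$ then $\sigma$ is a non-negative integer combination of simple roots by axiom \ref{it:wss1} of \cref{def:WSS}, so $\sigma \in \Phi^+$ and $\siv \in \Phiv^+$; otherwise $\sigma = \g_1 + \g_2$ with $\g_1,\g_2 \in \Phi^+$ by \cref{lemma:assoc}, so $\gv_1,\gv_2 \in \Phiv^+$. Applying \cref{lemma:addclosed3} inside the dual root system $\Phiv$, it suffices to verify
\[
  \g - \g' \notin \Phiv^+ \quad \text{for all } \g, \g' \in \Sa,
\]
and in fact the stronger claim $\g - \g' \notin \Phiv$ for $\g \ne \g'$ will be proved.

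Two cases arise. If $\g$ and $\g'$ both lie in $\sa$ for one and the same $\sigma$, then necessarily $\sigma \in \Sigma \setminus \Phi$ and $\{\g, \g'\} = \{\gv_1, \gv_2\}$; strong orthogonality of $\g_1, \g_2$ in $\Phi$ is equivalent to strong orthogonality of $\gv_1, \gv_2$ in $\Phiv$ (both being characterized by $W$-conjugacy to a pair of orthogonal simple (co)roots), whence $\gv_1 - \gv_2 \notin \Phiv$. If instead $\g \in \sa$ and $\g' \in (\sigma')\ass$ for distinct $\sigma, \sigma' \in \Sigma$, then $|\g| \subseteq |\sigma|$ and $|\g'| \subseteq |\sigma'|$; localizing $\cS$ first to the Levi datum on $S_0 := |\sigma| \cup |\sigma'|$ and then to the rank-$2$ subdatum $(\ZZ\sigma + \ZZ\sigma', \{\sigma, \sigma'\}, S_0 \cap S\p)$ of \eqref{eq:rank2} reduces the problem to the indecomposable cases classified in \cref{tab:rank2}.

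The main obstacle is then precisely this finite rank-$2$ check: the underlinings and asterisks in \cref{tab:rank2} mark the coroots associated to each of the two spherical roots, and a direct inspection shows that in every entry of the table the difference of any two such marked positive coroots fails to lie in $\Phiv$. Once this verification is completed, $\Sa$ is the basis of an additively closed subsystem of $\Phiv$, and $\Ga_\cS \subseteq \Gv$ exists and is unique by Borel--de Siebenthal.
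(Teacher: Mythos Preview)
Your approach is essentially identical to the paper's: reduce via Borel--de Siebenthal and \cref{lemma:addclosed3} to showing $\gv-\gv'\notin\Phiv^+$, split into the same two cases, and finish the second case by the rank-$2$ classification in \cref{tab:rank2}.

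Two small points. First, your reading of the table is off: the underlinings in \cref{tab:rank2} mark those spherical roots that are \emph{not} roots of $\Phi$ (so that one knows where to apply \cref{lemma:assoc}), and the asterisks flag the handful of entries in which the support of one associated coroot is contained in that of another; they do not mark the coroots themselves. Second, the paper exploits exactly this support-containment observation to cut the Case~2 verification from the full table down to five lines: if $\gv_1-\gv_2\in(\Phiv)^+$ then necessarily $|\gv_2|\subseteq|\gv_1|$, and only the asterisked rows have this property. Your proposal to inspect every entry is correct but does more work than necessary. (In Case~1 you invoke strong orthogonality of $\gv_1,\gv_2$; the paper instead uses property~\ref{it:assoc3} of \cref{lemma:assoc} to write $\gv_1-\gv_2=\dv_1-\dv_2$ as a difference of distinct simple coroots, which is manifestly not a coroot. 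Both arguments are valid.)
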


\begin{proof}

  A root subsystem of $\Phiv$ is the root system of a connected
  reductive subgroup of $\Gv$ containing $\Tv$ if and only if it is
  additively closed (see \cite{BdS}). So, by \cref{lemma:addclosed3},
  we have to show that $\e:=\gv_1-\gv_2\not\in(\Phiv)^+$ for all
  $\gv_1\ne\gv_2\in\Sa$.

  If both $\g_i$ are associated to the same $\sigma\in\Sigma$ then
  $\e=\dv_1-\dv_2$ where the $\dv_i$ are distinct simple roots of
  $\Gv$, hence $\e\not\in\Phiv$.

  Now assume that $\g_1,\g_2$ are associated to distinct elements
  $\sigma_1,\sigma_2\in\Sigma$, respectively. If $\e\in(\Phiv)^+$ then
  the supports have to be contained one in another:
  $|\gv_2|\subseteq|\gv_1|$. In \cref{tab:rank2} these cases are
  marked by an asterisk (for convenience, the non-roots are
  underlined). They are:

  $\begin{array}{llll}
     |\sigma_1|{\cup}|\sigma_2|&\sigma_1,\sigma_2&\gv_1,\gv_2&\e\\
     \noalign{\smallskip\hrule\smallskip}
     \sB_4&\7\a_1+\a_2+\a_3+\a_4,\a_2+2\a_3+3\a_4&\72\av_1+2\av_2+2\av_3+\av_4,\av_2+\av_3+\av_4&2\av_1+\av_2+\av_3\\

     \noalign{\smallskip\hrule\smallskip}
     \sB_4&\7\a_1+\a_2+\a_3+\a_4,\a_2+2\a_3+3\a_4&\72\av_1+2\av_2+2\av_3+\av_4,2\av_3+\av_4&2\av_1+2\av_2\\

     \noalign{\smallskip\hrule\smallskip}
     \sC_n&\7\a_1+2\a_2\+2\a_{n-1}+\a_n,\a_1&\7\av_1+2\av_2\+2\av_n,\av_1&2\av_2\+2\av_n\\
     \noalign{\smallskip\hrule\smallskip}
     \sC_n+\sA_1&\7\a_1+2\a_2\+2\a_{n-1}+\a_n,\a_1+\a_1'&\7\av_1+2\av_2\+2\av_n,\av_1&2\av_2\+2\av_n\\
     \noalign{\smallskip\hrule\smallskip}
     \sG_2&\7\a_1+\a_2,\a_1&\7\av_1+3\av_2,\av_1&3\av_2\\
     \noalign{\smallskip\hrule\smallskip}

\end{array}
$

In none of the cases is $\e$ a root of $\Gv$.
\end{proof}

\begin{remark}

  Observe that it is crucial to pass to the dual root system. Consider
  for example the first $\sC_n$-case in \cref{tab:rank2}. Then
  $\g_1-\g_2=\sigma_1-\sigma_2=2\a_2\+2\a_{n-1}+\a_n$ is actually a
  root of $\sC_n$. Therefore neither $\Sa$ nor $\Sigma$ is the set of
  simple roots for a subgroup of $G$.

\end{remark}

Let $A$ be the torus with character group $\Xi$. Then the dual torus
\begin{equation}
  \Av=\Hom(\Xiv,\G_m)=\Xi\otimes\G_m
\end{equation}
with character group $\Xiv$ is, by construction, a maximal torus of
the dual group $\Gv_\cS$. The inclusion $\Xi\to\Xiq$ induces a
homomorphism $\eta:\Av\to \Tv$.

\begin{definition}

  Let $\cS=(\Xi,\Sigma,S\p)$ be a \wss.. Then a homomorphism
  $\phi:\Gv_\cS\to \Gv$ is called \emph{adapted} if it factors through
  $\Ga_\cS\subseteq \Gv$ and if it is compatible with the map $\eta$
  between maximal tori, i.e., if the following diagram commutes:
  \begin{equation}
    \cxymatrix{\Av\ar[r]^\eta\ar@{^(->}[d]&\Tv\ar@{^(->}[d]\\
      \Gv_\cS\ar[r]^\phi&\Ga_\cS}
  \end{equation}

\end{definition}

To show the existence of adapted homomorphisms we observe that the
sets $\sa$, $\sigma\in\Sigma$, partition $\Sa$ into subsets of size at
most $2$. Therefore, there is a unique involution $s$ acting on $\Sa$
whose orbits are precisely the sets $\sa$. Our main observation is:

\begin{lemma}

  The action of $s$ on $\Sa$ is a folding in the sense of
  \cref{def:folding}.

\end{lemma}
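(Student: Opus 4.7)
The plan is to verify the two axioms of \cref{def:folding} for the involution $s$ acting on the simple root system $\Sa$ of $\Ga_\cS\subseteq\Gv$. The bookkeeping is that $\Sa$ sits inside $\Xi(\Tv)=\Xiqv$, and that for an element $\alpha=\gv\in\Sa$ (with $\g\in\Phi$) the coroot of $\alpha$ inside the ambient group $\Gv$ is $\g\in\Xiq$. The orbits of $s$ on $\Sa$ are of two kinds: singletons $\{\siv\}$ for $\sigma\in\Sigma\cap\Phi$, and pairs $\sa=\{\gv_1,\gv_2\}$ for $\sigma=\g_1+\g_2\in\Sigma\setminus\Phi$.

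Axiom \ref{it:fold1} is immediate from the strong orthogonality asserted by part \ref{it:assoc2} of \cref{lemma:assoc}: if $\alpha=\gv_1\ne s\alpha=\gv_2$, then the coroot of $s\alpha$ inside $\Ga_\cS$ is $\g_2$, and $(\g_1,\g_2)=0$ forces $\<\alpha\mid\g_2\>=0$.

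For axiom \ref{it:fold2}, the case $\alpha=s\alpha$ is trivial, so assume $\alpha=\gv_1$, $s\alpha=\gv_2$ with $\sigma=\g_1+\g_2\in\Sigma\setminus\Phi$. Part \ref{it:assoc3} of \cref{lemma:assoc} furnishes $\d_1,\d_2\in S$ with
\begin{equation}
  \alpha-s\alpha=\gv_1-\gv_2=\dv_1-\dv_2.
\end{equation}
On the other side, for any $\beta\in\Sa$ the sum $\beta^\vee+(s\beta)^\vee$, with coroots taken inside $\Ga_\cS$ and thus lying in $\Xiq$, equals $2\tau$ when $\{\beta\}$ is an $s$-orbit corresponding to $\tau\in\Sigma\cap\Phi$, and equals $\tau_1+\tau_2=\tau$ when $\{\beta,s\beta\}$ corresponds to $\tau=\tau_1+\tau_2\in\Sigma\setminus\Phi$. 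In either case it is a positive integer multiple of some $\tau\in\Sigma\subseteq\Xi$, so axiom \ref{it:fold2} reduces to verifying $\<\chi\mid\dv_1-\dv_2\>=0$ for every $\chi\in\Xi$.

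This last statement is exactly what the defining axioms of a \wss. provide, with a bifurcation on the type of $\sigma$. If $\sigma$ is of type $\sD_2$, then $\{\d_1,\d_2\}=\{\g_1,\g_2\}$ is the orthogonal pair of simple roots summing to $\sigma$, and axiom \ref{it:wss3} of \cref{def:WSS} directly gives $\<\Xi\mid\dv_1-\dv_2\>=0$. Otherwise $\sigma$ is of type $\sD_n$ with $n\ge3$ or $\sB_3$, and inspection of \cref{tab:NSR} (as already used in the proof of \cref{lemma:sigmavee}) shows $\d_1,\d_2\in S\p$; then axiom \ref{it:wss2} gives $\<\Xi\mid\dv_i\>=0$ separately, and subtraction yields the claim. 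The only real content is the substitution of $\gv_1-\gv_2$ by $\dv_1-\dv_2$ through \cref{lemma:assoc}, which places this difference in a subspace of $\Xiqv$ where the \wss. axioms are sharp; everything else is bookkeeping, and the main potential obstacle is simply keeping the pairings between $\Xiqv$ and $\Xiq$ straight as one passes between $G$ and $\Gv$.
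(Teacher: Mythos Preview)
Your proof is correct and follows essentially the same route as the paper's. The only difference is packaging: the paper dispatches axiom \ref{it:fold2} in one line by observing that $\bv+\s\bv\in\Sigma\cup2\Sigma\subseteq\Xi$ and then citing \cref{lemma:sigmavee} (which already contains the claim $\<\Xi\mid\gv_1-\gv_2\>=0$), whereas you unpack that lemma's proof inline---writing $\gv_1-\gv_2=\dv_1-\dv_2$ via \cref{lemma:assoc}\ref{it:assoc3} and then invoking axioms \ref{it:wss2} and \ref{it:wss3} of \cref{def:WSS} directly, which you rightly note is exactly the argument used to prove \cref{lemma:sigmavee}.
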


\begin{proof}

  Folding property \ref{it:fold1} follows from
  \cref{lemma:assoc}\,\ref{it:assoc2} (strong orthogonality of
  $\gamma_1$ and $\gamma_2$). Property \ref{it:fold2} is trivial if
  $\av=\sigma\in\Sigma\cap\Phi$ since then $\a=\s\a$. Otherwise, it
  follows from \cref{lemma:sigmavee} since
  $\bv+\s\bv\in\Sigma\cup2\Sigma\subseteq\Xi$ for all $\b=\gv\in\Sa$.
\end{proof}

From this, we get:

\begin{theorem}\label{thm:main}

  Let $\cS=(\Xi,\Sigma,S\p)$ be a \wss.. Then there exists an adapted
  homomorphism $\phi:\Gv_\cS\to \Gv$.

\end{theorem}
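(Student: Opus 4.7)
The plan is to apply \cref{lemma:folded} to the reductive group $\Ga_\cS$ (whose root datum has simple roots $\Sa$ and maximal torus $\Tv$) together with the folding $s$ just established. The target lattice in the lemma will be $\Xiv$, and the map $r$ will be the restriction $\eta^*:\Xiqv\to\Xiv$ dual to the inclusion $\Xi\hookrightarrow\Xiq$, so that $A$ becomes $\Av$ and $\phi_A$ becomes $\eta$.

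The first step is to check the hypotheses of \cref{lemma:folded}. The map $\eta^*$ has finite cokernel (equal to $\Xi_{\rm sat}/\Xi$), and $r^\vee$ is the inclusion $\Xi\hookrightarrow\Xiq$. For the condition $r(\gv-\s\gv)=0$ for $\gv\in\Sa$: when $\sigma\in\Phi$ the element $\gv=\siv$ is $s$-fixed, so the condition is automatic. When $\sigma\notin\Phi$, \cref{lemma:assoc}\,\ref{it:assoc3} gives $\gv_1-\gv_2=\dv_1-\dv_2$ with $\d_1,\d_2\in S$; for $\sigma$ of type $\sD_2$ one has $\d_i=\g_i$ and axiom \ref{it:wss3} of \cref{def:WSS} forces $\<\Xi\mid\dv_1-\dv_2\>=0$, while in all remaining cases the table in the proof of \cref{lemma:assoc} places both $\d_i$ in $S\p$ and axiom \ref{it:wss2} yields the same conclusion. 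The second condition $\overline{\Sa}^\vee\subseteq r^\vee(\Xi)$ reduces to $\Sigma\subseteq\Xi$, since the $s$-orbit sum on $\sa$ is $\g=\sigma$ in the $s$-fixed case and $\g_1+\g_2=\sigma$ otherwise; this is built into the definition of a \wss..

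\cref{lemma:folded} will then produce a connected reductive group $H$ with based root datum $(\Xiv,\eta^*(\Sa),\Xi,\Sigma)$ and a homomorphism $\phi:H\to\Ga_\cS$ with finite kernel that restricts to $\eta$ on maximal tori. To identify $H$ with $\Gv_\cS$ it suffices to verify $\eta^*(\Sa)=\Siv$ inside $\Xiv$, which is precisely the content of \cref{lemma:sigmavee}: for any $\chi\in\Xi$ and any $\gv\in\sa$ one has $\eta^*(\gv)(\chi)=\<\chi\mid\gv\>=(\chi,\siv)=\siv(\chi)$, with the case $\sigma\in\Phi$ holding tautologically. Composing the resulting $\phi:\Gv_\cS\to\Ga_\cS$ with the inclusion $\Ga_\cS\into\Gv$ from \cref{thm:assexist} yields the desired homomorphism $\phi:\Gv_\cS\to\Gv$: it factors through $\Ga_\cS$ by construction and restricts to $\eta$ on $\Av\subseteq\Gv_\cS$, so it is adapted.

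Most of the conceptual content is already absorbed into the preparatory lemmas; the only steps that need real care here are the case analysis for $r(\gv-\s\gv)=0$, which invokes the correct WSS axiom in each branch, and the identification of $H$ with $\Gv_\cS$. The latter is the main delicate point: it hinges on recognizing that the new simple roots $\eta^*(\gv)$ produced intrinsically by the folding construction coincide with the abstractly defined $\siv\in\Xiv$ featuring in the root datum of $\Gv_\cS$, and this coincidence is exactly what \cref{lemma:sigmavee} records.
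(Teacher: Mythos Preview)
Your proof is correct and follows exactly the paper's approach: apply \cref{lemma:folded} to the folding $s$ on $\Sa$ with $r=\eta^*:\Xiqv\to\Xiv$. The paper's own proof is a one-line invocation of that lemma; you have simply spelled out the verification of its hypotheses and the identification $H\cong\Gv_\cS$ via \cref{lemma:sigmavee}, which the paper leaves implicit.
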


\begin{proof}

  Apply \cref{lemma:folded} to the $s$-action on $\Sa$ and
  $r=\res:\Xiqv\to\Xiv$.
\end{proof}

\begin{remark}

  The kernel $K$ of $\eta:\Av\to\Tv$ is finite since
  $\Xiv(\Av)=\Xi\to\Xiq=\Xiv(\Tv)$ is injective. Hence $K$ is also the
  kernel of $\phi$. Moreover, it is easy to see that
  $K\cong\Xi_{\rm sat}/\Xi$ where
  $\Xi_{\rm sat}=(\Xi\otimes\QQ)\cap\Xiq$. This means that $\phi$ is
  injective if and only if $\Xi$ is a direct summand of $\Xiq$ or, put
  differently, that $\phi(\Gv_\cS)=\Gv_{\cS_{\rm sat}}$ where
  $\cS_{\rm sat}=(\Xi_{\rm sat},\Sigma,S\p)$.

\end{remark}

\cref{cor:minrank} now implies:

\begin{corollary}

  Let $\phi:\Gv_\cS\to\Gv$ be adapted. Then:

  \begin{enumerate}

  \item\label{it:main2} The variety $\Ga_\cS/\phi(\Gv_\cS)$ is affine
    spherical of minimal rank.

  \item\label{it:main3} $\phi$ induces an injective homomorphism
    $(\Gv_\cS)\ad\into(\Ga_\cS)\ad$ between adjoint groups. The
    variety $(\Ga_\cS)\ad/(\Gv_\cS)\ad$ is a product of varieties from
    the list \eqref{eq:minranklist}.

  \end{enumerate}

\end{corollary}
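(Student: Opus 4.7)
The plan is to interpret $\phi\colon\Gv_\cS\to\Ga_\cS$ as the folding morphism produced by \cref{lemma:folded} applied to $\Ga_\cS$ (not $\Gv$), and then to quote \cref{cor:affsphminrk} and \cref{cor:minrank} directly.

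Since $\Ga_\cS\supseteq\Tv$, its based root datum has character lattice $\Xiqv$ and simple roots $\Sa$, with simple coroots $\{\g\mid\gv\in\Sa\}\subseteq\Xiq$. The involution $s$ on $\Sa$ whose orbits are the sets $\sa$ is a folding, as shown just before \cref{thm:main}. I take the target lattice in \cref{lemma:folded} to be $\Xiv$ and the map to be $r:=\mathrm{res}\colon\Xiqv\to\Xiv$, dual to the inclusion $\Xi\into\Xiq$. The hypothesis $r(\gv-\s\gv)=0$ is trivial when $\s\gv=\gv$; otherwise $\gv-\s\gv=\dv_1-\dv_2$ by \cref{lemma:assoc}\,\ref{it:assoc3}, and this is annihilated by $r$ either by axiom \ref{it:wss2} (when $\d_1,\d_2\in S\p$) or by axiom \ref{it:wss3} (in the $\sD_2$ case). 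The hypothesis $\Sq^\vee\subseteq r^\vee(\Xiv)$ amounts to $\Sigma\subseteq\Xi$, which holds by definition. Finally, the orbit-sum of the simple coroots under $s$ equals $\sigma$ in either case (namely $\g$ when $\sigma=\g\in\Phi$, and $\g_1+\g_2$ otherwise), so the folded based root datum reads $(\Xiv,\Siv,\Xi,\Sigma)$, i.e. exactly that of $\Gv_\cS$. Adaptedness forces $\phi$ to agree with the resulting folding morphism up to an automorphism of $\Gv_\cS$; this is enough because both conclusions of the corollary concern only the image $\phi(\Gv_\cS)$ and the associated quotient.

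With this identification in place, \cref{cor:affsphminrk} yields \ref{it:main2}: the variety $\Ga_\cS/\phi(\Gv_\cS)$ is affine spherical of minimal rank. For \ref{it:main3}, \cref{cor:minrank}\,\ref{it:minrank1} provides $Z(\Gv_\cS)=\phi^{-1}(Z(\Ga_\cS))$, whence $\phi$ descends to an injection $(\Gv_\cS)\ad\into(\Ga_\cS)\ad$, and \cref{cor:minrank}\,\ref{it:minrank2} identifies $(\Ga_\cS)\ad/(\Gv_\cS)\ad$ as a product of varieties from the list \eqref{eq:minranklist}. The only real bookkeeping point, and thus the closest thing to an obstacle, is to remember to run \cref{lemma:folded} with $\Ga_\cS$ as ambient group rather than $\Gv$, so that taking orbit sums on the coroot side of $\Ga_\cS$ recovers the elements of $\Sigma$ directly.
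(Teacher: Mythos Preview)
Your approach is essentially identical to the paper's: the paper simply records ``\cref{cor:minrank} now implies:'' and states the corollary, relying on the fact that $\phi:\Gv_\cS\to\Ga_\cS$ is an instance of the folding construction of \cref{lemma:folded}. You have usefully spelled out the verification of the hypotheses.

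One phrasing to tighten: the clause ``adaptedness forces $\phi$ to agree with the resulting folding morphism up to an automorphism of $\Gv_\cS$'' is not accurate. If $\phi=\phi_0\circ\alpha$ for an automorphism $\alpha$ of $\Gv_\cS$, the images $\phi(\Gv_\cS)$ and $\phi_0(\Gv_\cS)$ would literally coincide, whereas distinct adapted homomorphisms generally have distinct images in $\Ga_\cS$. What is actually true (this is \cref{thm:unique}\,\ref{it:unique3}, proved immediately afterward) is that any two adapted $\phi$ differ by $\Ad(t)$ for some $t\in\Ta\ad$, i.e., by an \emph{inner automorphism of $\Ga_\cS$}; this makes the images $\Ga_\cS$-conjugate and the quotients isomorphic. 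Even cleaner, and avoiding the forward reference: any adapted $\phi$ already satisfies the conclusion of \cref{lemma:folded} (it extends $\phi_A=\eta$, has the correct based root datum by your verification, and has finite kernel by the remark following \cref{thm:main}), so \cref{cor:minrank} and \cref{cor:affsphminrk} apply to it directly, with no need to compare against a distinguished folding morphism.
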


Next we address the uniqueness of adapted homomorphisms. This has been
already answered in \cite{SV} but we need a little extension. First
observe that, in general, adapted homomorphisms cannot be unique since
$\Ad(a)\circ\phi$ is again adapted whenever $\phi$ is adapted and
$a\in \Tv$. This $\Tv$-action is not free though. Therefore, we
consider the torus $\Ta\ad$ whose character group is $\ZZ\Sa$. Then
clearly $\Ta\ad$ is the maximal torus of the adjoint quotient
$(\Ga_\cS)\ad$ and therefore acts on $\Ga_\cS$ by automorphisms.

\begin{theorem}\label{thm:unique}

  \begin{enumerate}

  \item\label{it:unique1} Let $\cS_0=(\Xi,\Sigma_0,S\p)$ be the
    localization of $\cS$ with respect to a subset
    $\Sigma_0\subseteq\Sigma$ and let $\phi:\Gv_\cS\to\Gv$ be
    adapted. Then $\Gv_{\cS_0}\subseteq\Gv_\cS$ and
    $\res\phi:\Gv_{\cS_0}\to\Gv$ is adapted as well.

  \item\label{it:unique2} For $\sigma\in\Sigma$ let $\cS(\sigma)$ be
    the localization with $\Sigma_0=\{\sigma\}$. Then every system
    $(\phi_\sigma)_{\sigma\in\Sigma}$ of adapted homomorphisms
    $\Gv_{\cS(\sigma)}\to\Gv$ can be uniquely extended to an adapted
    homomorphism $\Gv_\cS\to\Gv$.

  \item\label{it:unique3} $\Ta\ad$ acts simply transitively on the set
    of adapted homomorphisms $\phi:\Gv_\cS\to\Gv$.

  \end{enumerate}

\end{theorem}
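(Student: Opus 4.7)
The plan is to prove the parts in the order (i), the rank-one case of (iii), (ii), and finally general (iii). For part (i), the dual group $\Gv_{\cS_0}$ sits inside $\Gv_\cS$ as the Levi subgroup attached to the sub-basis $\Sigma_0\subseteq\Sigma$: both have the same maximal torus $\Av$, and $\Gv_{\cS_0}$ is generated over $\Av$ by the root subgroups $U_{\pm\sigma}$ for $\sigma\in\Sigma_0$. Under the adapted $\phi$, each such root subgroup lands in the subgroup of $\Ga_\cS$ generated by the root subgroups of $\Gv$ for $\gv\in\sa\subseteq\Sa_0$, whence $\phi(\Gv_{\cS_0})\subseteq\Ga_{\cS_0}$. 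The torus square for $\res\phi$ is just the restriction of the square for $\phi$, hence commutes; so $\res\phi$ is adapted.

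For the rank-one case $\cS=\cS(\sigma)$, the group $\Gv_{\cS(\sigma)}$ is generated by $\Av$ and the two opposite root subgroups $U_{\pm\sigma}$, so an adapted $\phi$ is determined by $\phi(u_\sigma(1))$. The coroots in $\sa$ are strongly orthogonal by \cref{lemma:assoc}, so the corresponding root subgroups of $\Ga_{\cS(\sigma)}$ commute; moreover \cref{lemma:sigmavee} gives $\eta(\siv)=\sum_{\gv\in\sa}\gv$, matching up the Cartan elements. A direct check of the $\SL_2$-relations then shows that for every tuple of nonzero scalars $(c_\gv)_{\gv\in\sa}$ the assignment $u_\sigma(t)\mapsto\prod_{\gv\in\sa}u_\gv(c_\gv t)$ extends to an adapted homomorphism, and conjugation by $\Ta\ad(\cS(\sigma))$, whose character group is $\ZZ\,\sa$, rescales each $c_\gv$ independently, giving simple transitivity.

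For part (ii), uniqueness is immediate because $\Av$ together with the subgroups $\Gv_{\cS(\sigma)}$ generate $\Gv_\cS$. For existence, start with any adapted $\phi_0:\Gv_\cS\to\Gv$ furnished by \cref{thm:main}. By (i), each $\phi_0|_{\Gv_{\cS(\sigma)}}$ is adapted, so by the rank-one case there is a unique $a_\sigma\in\Ta\ad(\cS(\sigma))$ with $\phi_\sigma=\Ad(a_\sigma)\circ\phi_0|_{\Gv_{\cS(\sigma)}}$. Since the $\sa$'s partition $\Sa$ (as noted just before the statement), $\ZZ\Sa=\bigoplus_\sigma\ZZ\,\sa$, which dualizes to a canonical isomorphism $\Ta\ad=\prod_\sigma\Ta\ad(\cS(\sigma))$; let $a\in\Ta\ad$ correspond to $(a_\sigma)_\sigma$. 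Then $\Ad(a)\circ\phi_0$ is adapted, restricts on $\Av$ to $\eta$ and on each $\Gv_{\cS(\sigma)}$ to $\phi_\sigma$, and is the required extension.

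For general (iii), given two adapted homomorphisms $\phi_1,\phi_2:\Gv_\cS\to\Gv$, the same assembly produces $a\in\Ta\ad$ such that $\Ad(a)\circ\phi_1$ and $\phi_2$ coincide on every $\Gv_{\cS(\sigma)}$ and on $\Av$; part (ii) then forces $\Ad(a)\circ\phi_1=\phi_2$ globally. Freeness is immediate: if $\Ad(a)\circ\phi=\phi$, inspecting the action on each root vector in the image yields $\gv(a)=1$ for every $\gv\in\Sa$, and since $\Sa$ is a basis of $\Xi(\Ta\ad)=\ZZ\Sa$ this forces $a=1$. I expect the principal obstacle to be the rank-one step, specifically verifying that an arbitrary rescaling $(c_\gv)$ really produces a group (and not just a Lie-algebra) homomorphism; this rests on the strong orthogonality of the associated coroots, which makes the corresponding rank-one subgroups of $\Gv$ commute and allows the map to be realized as a diagonal of the form $\SL(2)\to\SL(2)\times\SL(2)$ followed by independent pinning rescalings.
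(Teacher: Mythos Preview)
Your argument is correct and follows essentially the same strategy as the paper's: reduce to rank one via the Levi decomposition $\Gv_{\cS_0}\subseteq\Gv_\cS$, establish simple transitivity of $\Ta\ad$ on rank-one adapted maps, assemble via the product decomposition $\Ta\ad\cong\prod_\sigma\Ta\ad(\cS(\sigma))$ (equivalently, via the linear independence of $\Sa$), and deduce \ref{it:unique2} and \ref{it:unique3} from this. The paper is terser---for \ref{it:unique1} it just says the folding construction commutes with restriction to an $s$-stable subset, and for the rank-one step it argues on the Lie algebra side that $\phi(e_{\siv})\in\bigoplus_{\gv\in\sa}\fgv_\gv$ must have all components nonzero---whereas you track root subgroups directly and carry out the $\SL(2)$-verification more explicitly; but the underlying ideas coincide.
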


\begin{proof}

  Clearly, we have an inclusion $\Gv_{\cS_0}\subseteq\Gv_\cS$. It is
  easy to see that the folding process (\cref{lemma:folded}) commutes
  with restricting to an $s$-invariant subset $S_0$ of $S$. This shows
  that $\res\phi$ has values in $\Ga_{\cS_0}$ and is therefore
  adapted, proving \ref{it:unique1}.

  An adapted homomorphism $\phi_\sigma:\Gv_{\cS(\sigma)}\to\Gv$ is
  uniquely determined by the image of a generator
  $e_{\siv}\in\fgv_\cS$ in
  $\bigoplus_{\gv\in\sa}\fgv_{\gv}$. Moreover, the image vector has to
  have a non-zero component in every summand (since
  $\res_{\Av}\gv=\siv$ is non-trivial). Thus, the torus $\Ta\ad$ acts
  transitively on the set of all $\phi_\sigma$ where the action of
  $t\in\Ta\ad$ depends only on the character values $\gv(t)$ with
  $\gv\in\sa$. Since $\Sa$ is linearly independent, this implies that
  $\Ta\ad$ acts simply transitively on the set of families
  $(\phi_\sigma)_{\sigma\in\Sigma}$ of adapted homomorphisms. The
  existence of an adapted homomorphism shows that there is a family
  which has an extension $\phi$. So all have. Uniqueness follows from
  the fact that $\Gv_{\cS}$ is generated by the subgroups
  $\Gv_{\cS(\sigma)}$, $\sigma\in\Sigma$ proving \ref{it:unique2}.

  This, finally, implies that $\Ta\ad$ acts also simply transitively
  on the set of adapted maps $\phi$, proving \ref{it:unique3}.
\end{proof}

\section{Momentum maps and invariant differential
  operators}\label{sec:moment}

Some results from \cite{KnopAB} and \cite{KnopHC} can be reformulated
using the dual group or rather its Lie algebra. For this assume that
$k=\CC$. Then $\ft^\vee$ is is the same as the dual Cartan subalgebra
$\ft^*$ which is canonically a subspace of the coadjoint
representation.

Now let $X$ be a smooth $G$-variety. Then $X$ induces a \wss. $\cS$
(\cref{prop:non-spherical}). In the following, we replace the index
$\cS$ by $X$. So, the dual group of $X$ is $\Gv_X$. Its Weyl group is
$W_X$. Since we are only concerned with Lie algebras, the distinction
between the lattices $\Xi$ and $\tilde\Xi$ in \eqref{eq:non-spherical}
is irrelevant.

Let $m:T^*_X\to\fg^*$ be the moment map on the cotangent bundle.  It
was shown in \cite{KnopAB} that there is a canonical surjective
$G$-invariant morphism $m_0:T^*_X\to\fa^*/W_X$ with irreducible
generic fibers such that the right hand square of following diagram
commutes:
\begin{equation}\label{eq:CDTvee}
  \cxymatrix{
    \fgv_X\ar@{>>}[r]\ar@{^(->}[d]&\fa^\vee/W_X\ar@{=}[r]\ar[d]&
    \fa^*/W_X\ar[d]&T^*_X\ar[d]^m\ar@{>>}_<<<{m_0}[l]\\
    \fgv\ar@{>>}[r]&\ft^\vee/W\ar@{=}[r]& \ft^*/W&\fg^*\ar@{>>}[l] }
\end{equation}
The left hand square combines two applications of the Chevalley
isomorphism. Therefore the whole diagram commutes.

If $X$ is affine and spherical then $m_0$ is the categorical quotient
and the diagram can be interpreted as follows:

\begin{itemize}

\item There is a bijective correspondence between semisimple conjugacy
  classes of $\fgv$ and closed coadjoint orbits in $\fg^*$.

\item There is a bijective correspondence between semisimple conjugacy
  classes of $\fgv_X$ and closed $G$-orbits in $T^*_X$.

\item The moment map is compatible with these correspondences. In
  particular, if $o^\vee\subseteq\fgv$ corresponds to
  $o^*\subseteq\fg^*$ then the symplectic reduction
  $m^{-1}(o^*)\mod G$ (a finite set) is in bijection with
  $(o^\vee\cap\fgv_X)/\Gv_X$.

\end{itemize}

There is also a non-homogeneous version of this theorem which is
closer to representation theory. It works by replacing the cotangent
bundle by the ring $\cD(X)$ of differential operators on $X$. In
\cite{KnopHC} a certain central subalgebra $\cZ(X)$ of $\cD(X)$ was
constructed which, in case $X$ is affine, even coincides with the
center. On the other hand if $X$ is spherical (but possibly
non-affine) then $\cD(X)=\cZ(X)$ is commutative.  Let
$Z_X=\Spec\cZ(X)$ and $Z_G:=\Spec\cZ(G)$ where $\cZ(G)$ is the center
of the enveloping algebra $\cU(\fg)$. The main result of \cite{KnopHC}
is a Harish-Chandra type isomorphism
$Z(X)\overset\sim\to(\fa^*+\rho)/W_X$ where $\rho$ is the half-sum of
positive roots such that the right hand square of the following
diagram commutes:
\begin{equation}\label{eq:CDZvee}
  \cxymatrix{
    \fgv_X+\rho\ar@{>>}[r]\ar@{^(->}[d]&(\fa^\vee+\rho)/W_X\ar@{=}[r]\ar[d]&
    (\fa^*+\rho)/W_X\ar[d]&Z_X\ar[d]\ar[l]_>>>\sim\\
    \fgv\ar@{>>}[r]&\ft^\vee/W\ar@{=}[r]& \ft^*/W&Z_G\ar[l]_\sim}
\end{equation}
For the top left map to make sense and the diagram to commute, we need
the following:

\begin{lemma}

  The subspace $\fgv_X+\CC\rho\subseteq\fgv$ is a reductive
  subalgebra. In particular, the affine hyperplane $\fgv_X+\rho$ is
  $\Ad \Gv_X$-invariant.

\end{lemma}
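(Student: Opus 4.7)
The plan is to find $h\in\fa^\vee$ such that $\rho':=\rho-h$ centralizes $\fgv_X$ inside $\fgv$. Since $h\in\fa^\vee\subseteq\fgv_X$, the subspace $\fgv_X+\CC\rho$ coincides with $\fgv_X+\CC\rho'$; because $\rho'$ commutes with $\fgv_X$, this is either $\fgv_X$ itself (when $\rho'\in\fgv_X$) or the direct sum $\fgv_X\oplus\CC\rho'$ of a reductive algebra with a central one-dimensional summand. Either way the sum is reductive. Moreover $\Ad\Gv_X$ then fixes $\rho'$ pointwise and hence preserves the affine hyperplane $\rho+\fgv_X=\rho'+\fgv_X$, giving both assertions in one stroke.

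To construct $h$, I would first compute the action of $[\rho,-]$ on $\fgv_X$. On the Cartan $\fa^\vee\subseteq\ft^\vee$ it vanishes. For $\sigma\in\Sigma$, the folding construction used in the proof of \cref{thm:main} places the image under $\phi$ of a nonzero $\siv$-weight vector of $\fgv_X$ inside $\bigoplus_{\gv\in\sa}\fgv_\gv$, and $[\rho,-]$ acts on each summand $\fgv_\gv$ by the scalar $\<\rho\mid\gv\>$. The crucial point is that this scalar does not depend on the choice of $\gv\in\sa$: for $\sigma\in\Phi$ the orbit $\sa$ is a singleton, while for $\sigma\notin\Phi$, part \ref{it:assoc3} of \cref{lemma:assoc} gives $\gv_1-\gv_2=\dv_1-\dv_2$ with $\d_1,\d_2\in S$, whence $\<\rho\mid\gv_1-\gv_2\>=1-1=0$. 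Denoting this common value by $\lambda_\sigma$, the operator $[\rho,-]$ then acts on the $\pm\siv$-weight space of $\fgv_X$ by $\pm\lambda_\sigma$ and in particular stabilizes $\fgv_X$.

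By \cref{thm:rootsystem} the set $\Siv$ is the set of simple roots of the root datum of $\Gv_X$ and is hence linearly independent in $\Xiv$. The evaluation map $\fa^\vee=\Xi\otimes_\ZZ\CC\to\CC^\Sigma$, $h\mapsto(\<\siv\mid h\>)_{\sigma\in\Sigma}$, is therefore surjective, and I may pick $h\in\fa^\vee$ with $\<\siv\mid h\>=\lambda_\sigma$ for every $\sigma\in\Sigma$. By construction $[h,-]$ and $[\rho,-]$ agree on $\fa^\vee$ and on every $\pm\siv$-weight space of $\fgv_X$, and these together generate $\fgv_X$ as a Lie algebra, so $\rho-h$ centralizes $\fgv_X$ as required. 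The only delicate step is the eigenvalue coincidence inside each $\sa$, which reduces to the identity $\gv_1-\gv_2=\dv_1-\dv_2$ from \cref{lemma:assoc}; once that is granted, the remainder is pure linear algebra.
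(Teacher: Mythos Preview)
Your argument is correct. You reduce everything to a single elementary computation: for $\sigma\notin\Phi$ the identity $\gv_1-\gv_2=\dv_1-\dv_2$ with $\d_i\in S$ forces $\<\rho\mid\gv_1\>=\<\rho\mid\gv_2\>$, so $\ad\rho$ acts by a scalar $\lambda_\sigma$ on each simple root space of $\phi(\fgv_X)$; then linear independence of $\Siv$ lets you pick $h\in\fa^\vee$ matching these scalars, and the derivation property finishes the job.

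The paper proceeds differently. Instead of producing an abstract $h\in\fa^\vee$, it uses the explicit element $\rho\p$ (the half-sum of positive roots of $\Phi\p$) and the earlier \cref{lemma:rho-rhoL}, which says $\rho-\rho\p\in\half\Xi_{\max}$. This lets one recognize $\fgv_X+\CC(\rho-\rho\p)$ as the dual Lie algebra attached to the enlarged \wss. $(\Xi+\ZZ\,2(\rho-\rho\p),\Sigma,S\p)$, hence reductive by \cref{thm:rootsystem}; adjoining $\CC\rho\p$ then gives a subalgebra in which $\fgv_X$ sits as an ideal with abelian quotient. So the paper leans on the structure theory of \wsss. (change of lattice inside $\Xi_{\max}$), while you bypass all of that with a direct eigenvalue-matching argument. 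Your route is more self-contained and avoids invoking \cref{lemma:rho-rhoL}; the paper's route has the advantage of exhibiting an explicit, canonical complement $\rho-\rho\p$ rather than an $h$ chosen by lifting along a surjection.
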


\begin{proof}

  With the notation of \cref{lemma:rho-rhoL} we have
  $\rho_0:=\rho-\rho\p\in\half\Xi_{\max}$. Thus, $\fgv_X+\CC\rho_0$ is
  the dual subalgebra corresponding to the character group
  $\Xi+\ZZ(2\rho_0)$. Since $S$ is orthogonal to $\Xi_{\max}$ also
  $(\fgv_X+\CC\rho_0)\oplus\CC\rho\p$ is a subalgebra which contains
  $\fgv_X$ as an ideal with abelian quotient. This implies the
  assertion.
\end{proof}

If $X$ is spherical or affine then we have:

\begin{itemize}

\item There is a bijective correspondence between semisimple conjugacy
  classes of $\fgv$ and central characters of $\cU(\fg)$.

\item There is a bijective correspondence between semisimple conjugacy
  classes of $\fgv_X+\rho$ and central characters of $\cD(X)$.

\item These correspondences are compatible. In particular, if
  $o^\vee\subseteq\fgv$ corresponds to the central character $\chi$ of
  $\cU(\fg)$ then there is a bijective correspondence between the set
  of extensions of $\chi$ to a central character of $\cD(X)$ and the
  set $(o^\vee\cap(\fgv_X+\rho))/\Gv_X$.

\end{itemize}

\section{Centralizers}\label{sec:centralizer}

Let $\Phi\p=\Phi\cap\ZZ S\p\subseteq\Phi$ be the root subsystem
generated by $S\p$. Then all roots in $\Phi\p$ are orthogonal to
$\Xi$. Sometimes, a converse is true:

\begin{definition}\label{def:nondeg}

  A \wss. $(\Xi,\Sigma,S\p)$ is \emph{non-degenerate} if $\a\in\Phi$
  and $\<\Xi\mid\av\>=0$ imply $\a\in\Phi\p$.

\end{definition}

A similar condition has been considered in \cite{KnopAB} along with
the following remark:

\begin{lemma}\label{lemma:nondeg}

  Let $(\Xi,\Sigma,S\p)$ be a \wss.. Then $(\Xi_{\max},\Sigma,S\p)$ is
  non-degenerate.

\end{lemma}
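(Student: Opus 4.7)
The plan is to interpret $\<\Xi_{\max}\mid\av\>=0$ via the annihilator of $\Xi_{\max}$ in $\Xiqv\otimes\QQ$ and then to exploit the orthogonality coming from \cref{lemma:addclosed2} together with the fact that root supports are connected in the Dynkin diagram. By \eqref{eq:Ximax}, the annihilator is
\[
N_\QQ\;=\;\Span_\QQ\{\dv\mid\d\in S\p\}\;+\;\Span_\QQ\{\av'-\bv'\mid\a'+\b'\in\Sigma\text{ of type }\sD_2\},
\]
so the statement amounts to showing that no $\a\in\Phi\setminus\Phi\p$ has $\av\in N_\QQ$.

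First I would note that for any $\sigma=\a'+\b'\in\Sigma$ of type $\sD_2$, axiom \ref{it:wss2} forces $\a',\b'\notin S\p$: otherwise $\<\sigma\mid\av'\>=2$ would have to vanish. Hence the exceptional clause of \cref{lemma:addclosed2} cannot occur with $\d\in S\p$, and that lemma yields $\a'\perp S\p$ and $\b'\perp S\p$ with respect to the $W$-invariant scalar product. Transporting along the identification $\Xiqv\otimes\RR\cong\Xiq\otimes\RR$, the two summands of $N_\QQ$ become mutually orthogonal, so $\av=u_1+u_2$ decomposes uniquely and corresponds to $\a=w_1+w_2$ with $w_1\in\Span(S\p)$ and $w_2\in\Span(S_2)$, where $S_2:=\bigcup_P\{\a'_P,\b'_P\}$ ranges over the $\sD_2$-pairs. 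In particular the support of $\a$ lies in $S\p\cup S_2$; since $S\p\perp S_2$ there are no Dynkin edges between these two sets, and the connectedness of the support of a root forces it to lie entirely in $S\p$ or entirely in $S_2$.

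The first alternative gives $\a\in\Phi\p$ directly. The remaining case — which I expect to be the main obstacle — places $\a$ inside the Levi subsystem $\Phi_2:=\Phi\cap\Span(S_2)$, so $\av\in\Phi_2^\vee\subseteq\Span\{\eta^\vee\mid\eta\in S_2\}$. Projecting $\av=u_1+u_2$ orthogonally onto this last space annihilates $u_1$ and preserves $u_2$, yielding $\av=\sum_P\mu_P(\av'_P-\bv'_P)$. Expanding in the simple-coroot basis of $\Phi_2^\vee$, each generator $\av'_P-\bv'_P$ contributes $+1-1=0$ to the sum of coefficients, so $\sum_{\eta\in S_2}m_\eta=0$. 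But the simple-coroot coefficients of any nonzero coroot of the Levi $\Phi_2^\vee$ all have the same sign, so this forces $\av=0$ — a contradiction. Only the first alternative survives, and $\a\in\Phi\p$ as required.
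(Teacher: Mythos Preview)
Your argument is correct. You compute the rational annihilator of $\Xi_{\max}$ as the span of $(S\p)^\vee$ together with the differences $\alpha'^\vee-\beta'^\vee$ coming from $\sD_2$-roots, observe that these two pieces are supported on disjoint (and orthogonal) subsets $S\p$ and $S_2$ of $S$, and then use connectedness of root supports plus a coefficient-sum argument to rule out the $S_2$-branch. All steps are sound; in particular your check that $\alpha',\beta'\notin S\p$ and hence the exceptional clause of \cref{lemma:addclosed2} does not occur is correct (one could also argue directly: $\langle\alpha'+\beta'\mid\delta^\vee\rangle=0$ for $\delta\in S\p$ forces both $\langle\alpha'\mid\delta^\vee\rangle$ and $\langle\beta'\mid\delta^\vee\rangle$, each $\le0$, to vanish). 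A small stylistic remark: you do not need orthogonal projection in the last paragraph, since $S^{p,\vee}$ and $S_2^\vee$ already span a direct sum inside $\Lambda^\vee$.

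The paper's proof is much shorter and conceptually different. It exhibits a single witness: by \cref{lemma:rho-rhoL} one has $2(\rho-\rho\p)\in\Xi_{\max}$, and one checks that $\langle\rho-\rho\p\mid\beta^\vee\rangle$ is zero for $\beta\in S\p$ and strictly positive for $\beta\in S\setminus S\p$. Writing a positive coroot as $\alpha^\vee=\sum_\beta c_\beta\beta^\vee$ with $c_\beta\ge0$, the pairing $\langle\rho-\rho\p\mid\alpha^\vee\rangle$ is a sum of nonnegative terms and vanishes iff all $c_\beta$ with $\beta\notin S\p$ vanish, i.e.\ iff $\alpha\in\Phi\p$. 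So the paper trades your structural decomposition of the whole annihilator for one well-chosen element of $\Xi_{\max}$; your approach is more hands-on and makes the $\sD_2$ contribution visible, while the paper's is a one-liner once \cref{lemma:rho-rhoL} is in place.
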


\begin{proof}

  Since $\<\rho-\rho\p\mid\av\>=0$ implies $\a\in\Phi\p$ (see
  \eqref{eq:rho}) the assertion follows from \cref{lemma:rho-rhoL}.
\end{proof}

\begin{remark}

  The simplest example of a degenerate \wss. is $\cS=(0,\leer,\leer)$
  when $S\ne\leer$. It corresponds to the full flag variety $G/B$.

\end{remark}

Let $\cS=(\Xi,\Sigma,S\p)$ be a \wss. and let $W_\cS$ be the Weyl
group of the corresponding root system
(\cref{thm:rootsystem}). Clearly, it is also the Weyl group of the
dual group $\Gv_\cS$. A priori, $W_\cS$ acts only on $\Xi$. Next we
show that this action extends in a natural way to all of $\Xiq$. To
this end we define for $\sigma\in\Sigma$:
\begin{equation}\label{eq:nsigma}
  n_\sigma:=
  \begin{cases}
    s_\sigma&\text{if $\sigma\in\Sigma\cap\Phi$}\\
    s_{\g_1}s_{\g_2}&\text{if }\sigma=\g_1+\g_2\in\Sigma\setminus\Phi
  \end{cases}
\end{equation}

\begin{proposition}

  The map $s_\sigma\mapsto n_\sigma$ extends (uniquely) to a
  homomorphism $n:W_\cS\to W$. It has the following properties:
  \begin{itemize}

  \item $n(W_\cS)$ extends the $W_\cS$-action on $\Xi$, i.e.
    $n(w)\chi=w\chi$ for all $\chi\in\Xi$ and $w\in W_\cS$.

  \item $n(W_\cS)$ acts on $S\p\subseteq\Xiq$. More precisely, if
    $\sigma\in\Sigma$ then $n_\sigma=n(s_\sigma)$ acts on $\d\in S\p$
    as
    \begin{equation}
      n_\sigma(\d)=\begin{cases} \d_{3-i}&\text{if
        }\sigma=\g_1+\g_2\in\Sigma\setminus\Phi
        \text{ and }\d=\d_i\\
        \d&\text{otherwise.}
      \end{cases}
    \end{equation}

  \end{itemize}

\end{proposition}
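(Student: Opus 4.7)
My plan is to establish the three assertions in sequence, with the middle one --- the existence of the homomorphism $n$ --- carrying the main content.

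I first verify that each $n_\sigma$ is an involution in $W$ that preserves $\Xi$ and acts there as $s_\sigma \in W_\cS$. The case $\sigma \in \Sigma \cap \Phi$ is trivial. For $\sigma = \gamma_1 + \gamma_2 \notin \Phi$, the strong orthogonality of $\gamma_1, \gamma_2$ established in \cref{lemma:assoc} makes $s_{\gamma_1}$ and $s_{\gamma_2}$ commute, so $n_\sigma = s_{\gamma_1}s_{\gamma_2}$ is an involution. For $\chi \in \Xi$, this commutation gives
\[
n_\sigma(\chi) = \chi - \langle\chi,\gamma_1^\vee\rangle \gamma_1 - \langle\chi,\gamma_2^\vee\rangle\gamma_2,
\]
and \cref{lemma:sigmavee} reduces both pairings to $\langle\chi,\sigma^\vee\rangle$, so $n_\sigma(\chi) = \chi - \langle\chi,\sigma^\vee\rangle\sigma = s_\sigma(\chi)$.

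To extend $s_\sigma \mapsto n_\sigma$ to a homomorphism $n:W_\cS \to W$, I invoke the folding construction underlying \cref{thm:main}. It was shown there that $s$ is a folding (in the sense of \cref{def:folding}) of the root datum of $\Ga_\cS$, and the adapted $\phi:\Gv_\cS \to \Ga_\cS$ is produced by \cref{lemma:folded}. Using \cref{lemma:sigmavee} one checks that each simple root $\gamma^\vee$ of $\Ga_\cS$ restricts non-trivially to $\phi(A^\vee)$; since $\phi(A^\vee)$ is a torus and finitely many proper subtori cannot cover it, $\phi(A^\vee)$ meets the regular locus of $\Ga_\cS$. Hence $\phi$ sends $N_{\Gv_\cS}(A^\vee)$ into $N_{\Ga_\cS}(T^\vee)$ and induces a homomorphism $W_\cS \to W(\Ga_\cS) \subseteq W$. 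Unwinding the folding, the image of $s_\sigma$ is the product of reflections over the $s$-orbit $\sa$, i.e., precisely $n_\sigma$. Uniqueness of $n$ is automatic since the $s_\sigma$'s generate $W_\cS$.

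The action on $S\p$ follows from \cref{lemma:addclosed2}. For $\delta \in S\p$ outside the exceptional case, that lemma gives that $\{\pm\gamma^\vee,\pm\delta^\vee\}$ is an additively closed root subsystem of rank at most two, hence of type $\sA_1\times\sA_1$, so $\gamma$ and $\delta$ are orthogonal and $s_\gamma(\delta)=\delta$; therefore $n_\sigma(\delta)=\delta$. In the exceptional case $\sigma=\gamma_1+\gamma_2\notin\Phi$ with $\delta=\delta_j$, \cref{lemma:addclosed2} yields $\langle\delta_1+\delta_2,\gamma_i^\vee\rangle=0$, so setting $a_i:=\langle\delta_1,\gamma_i^\vee\rangle$, strong orthogonality gives $n_\sigma(\delta_1) = \delta_1 - a_1\gamma_1 - a_2\gamma_2$. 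Comparison with $\delta_2$ reduces to verifying the identity $\delta_1-\delta_2 = a_1\gamma_1 + a_2\gamma_2$, which is a short case check using \eqref{tab:assocroots} for types $\sD_n$ ($n\geq 3$) and $\sB_3$. The main obstacle is the existence step --- lifting a match on generators acting on $\Xi$ to a genuine homomorphism into $W$, i.e., verifying all Coxeter relations; the folding construction handles this by producing the homomorphism at the group level automatically. A more combinatorial alternative would be to localize to each pair $\{\sigma,\tau\} \subseteq \Sigma$ and directly check $(n_\sigma n_\tau)^{m(\sigma,\tau)}=1$ in $W$ using \cref{tab:rank2}.
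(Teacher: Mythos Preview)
Your proof is correct, but the key existence step takes a genuinely different route from the paper. You invoke the adapted homomorphism $\phi:\Gv_\cS\to\Gv$ from \cref{thm:main}: since $\phi(\Av)$ contains $\Ga_\cS$-regular elements, $\phi$ carries $N_{\Gv_\cS}(\Av)$ into $N_{\Ga_\cS}(\Tv)$, and the folding description of $\phi$ shows that the induced map $W_\cS\to W$ sends $s_\sigma$ to $n_\sigma$. The paper instead argues purely at the level of Weyl groups: after reducing (without loss) to $\Xi=\Xi_{\max}$, it introduces the subgroup $N\subseteq W$ of elements preserving $\Xi$, acting on $\Xi$ through $W_\cS$, and permuting $S\p$; non-degeneracy of $\Xi_{\max}$ (\cref{lemma:nondeg}) then forces restriction $N\to W_\cS$ to be an \emph{isomorphism}, and $n$ is its inverse. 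Your approach is conceptually appealing in that $n$ appears as the Weyl-group shadow of the already-constructed $\phi$, though you should note that regularity requires \emph{all} roots of $\Ga_\cS$ (not just the simple ones) to restrict non-trivially to $\phi(\Av)$---this holds because restriction sends each positive root of $\Ga_\cS$ to a nonzero non-negative combination of the linearly independent $\siv$'s. The paper's approach, by contrast, is independent of $\phi$ and yields injectivity of $n$ for free. Your treatment of the action on $S\p$ and of the restriction to $\Xi$ matches the paper's.
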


\begin{proof}

  Let $\sigma\in\Sigma$. If $\sigma\in\Phi$ then
  $n_\sigma=s_\sigma$. Hence also $n_\sigma=s_\sigma$ acts on $\Xi$
  and $n_\sigma(\d)=\d$ for $\d\in S\p$ since then $\d\perp\sigma$.

  If $\sigma\not\in\Phi$ let $\sigma=\g_1+\g_2$ be the decomposition
  of \cref{lemma:assoc}. Let $\chi\in\Xi$. Then \cref{lemma:sigmavee}
  implies
  \begin{equation}
    n_\sigma(\chi)=s_{\g_1}s_{\g_2}(\chi)=
    \chi-\<\chi\mid\gv_1\>\g_1-\<\chi\mid\gv_2\>\g_2=
    \chi-\<\chi\mid\siv\>(\g_1+\g_2)=s_\sigma(\chi).
  \end{equation}
  Now let $\d\in S\p$. If $\d\not\in\{\d_1,\d_2\}$ then
  $\d\perp\g_1,\g_2$ (see \cref{lemma:addclosed2}) and therefore
  $n_\sigma(\d)=\d$. On the other hand, if $\d=\d_1$ then
  $\<\d\mid\gv_1\>=1$, $\<\d\mid\gv_2\>=-1$ and therefore
  \begin{equation}
    n_\sigma(\d)=\d_1-\g_1+\g_2=\d_2.
  \end{equation}

  The assertion that $s_\sigma\mapsto n_\sigma$ extends to a
  homomorphism does not depend on the choice of $\Xi$. So, we choose
  the maximal one $\Xi=\Xi_{\max}$.

  Now let $N\subseteq W$ be the subgroup of all $w\in W$ with
  $w\Xi=\Xi$, $\res_\Xi w\in W_\cS$ and $wS\p=S\p$. Then $\res_\Xi$
  induces homomorphism $N\to W_\cS$. This homomorphism is surjective,
  since $n_\sigma\in N$ with $\res_\Xi n_\sigma=s_\sigma$ and since
  $W_\cS$ is generated by all $s_\sigma$.

  We show that this homomorphism is injective. For this, let $w\in N$
  be in the kernel. Then the non-degeneracy of $\cS$
  (\cref{lemma:nondeg}) implies that $w\in W_{S\p}$, the group
  generated by all $s_\d$, $\d\in S\p$. Finally $wS\p=S\p$ forces
  $w=e$.

  Thus, we have shown that $\res_\Xi:N\to W_\cS$ is an isomorphism.
  Its inverse $n$ has all required properties.
\end{proof}

Our goal is to determine the centralizer of
$\phi(\Gv_\cS)\subseteq \Gv$. Observe that $S\p$ determines a Levi
subgroup $\LvS\subseteq \Gv$ and the action of $W_\cS$ on $S\p$
induces an action on $\LvS$ by permuting the generators
$e_{\dv}\in\fgv$, $\d\in S\p$. It will turn out that the fixed point
group $(\LvS)^{W_\cS}$ is almost the centralizer of
$\phi(\Gv_\cS)$. But first, we look at its structure:

\begin{proposition}\label{prop:LWList}

  Up to isogeny, the inclusion $(\LvS)^{W_\cS}\subseteq \LvS$ is
  isomorphic to a product of
  \begin{equation}\label{eq:WS-List}
    \begin{array}{ll}
      \bullet\quad 1\subseteq \G_m,\\
      \bullet\quad H\subseteq H&\text{with $H$ simple},\\
      \bullet\quad \SL(2)\subseteq\SL(2)^n&n\ge2,\\
      \bullet\quad \SO(2n-1)\subseteq\SO(2n)&n\ge3,\\
      \bullet\quad \sG_2\subseteq\SO(8),\\
      \bullet\quad \SO(3)\subseteq\SL(3).
    \end{array}
  \end{equation}

\end{proposition}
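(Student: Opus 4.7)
The plan is to decompose the inclusion $(\LvS)^{W_\cS}\subseteq\LvS$ up to isogeny into indecomposable pieces under the $W_\cS$-action and match each piece with an entry of \eqref{eq:WS-List}. By the preceding proposition, $W_\cS$ is generated by the involutions $n_\sigma$, and each $n_\sigma$ acts on $S\p$ either trivially (when $\sigma\in\Sigma\cap\Phi$ or $\sigma$ is of type $\sD_2$) or as the transposition $\delta_1\leftrightarrow\delta_2$ of a distinguished pair in $S\p$. Consequently, $W_\cS$ acts on $\LvS$ by algebraic group automorphisms that permute simple factors and may induce diagram automorphisms on each.

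I would first partition the Dynkin diagram of $\LvS$ (on vertex set $S\p$) into $W_\cS$-orbits of connected components and, separately, decompose the central torus of $\LvS$ into $W_\cS$-stable one-parameter subgroups. This gives an isogeny decomposition of the pair $((\LvS)^{W_\cS},\LvS)$ as a product of indecomposable pieces, each of which is one of: a central $\G_m$ (fixed or inverted by $W_\cS$), a simple factor fixed pointwise by $W_\cS$, a single simple factor carrying a non-trivial $W_\cS$-action by diagram automorphisms, or an orbit of $n\geq 2$ isomorphic simple factors permuted transitively.

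The classification then proceeds through these cases. A fixed central $\G_m$ contributes the trivial inclusion; an inverted $\G_m$ yields $1\subseteq\G_m$; and a pointwise fixed simple factor $H$ yields $H\subseteq H$. For a transitively permuted orbit of simple factors, inspection of Table \ref{tab:NSR} shows that the available transpositions can only glue factors of rank one, so the outcome is $\SL(2)\subseteq\SL(2)^n$ for $n\geq 2$. For a single simple factor with non-trivial action, the $W_\cS$-action is generated by swaps coming from $\sB_3$-type spherical roots (flipping an $\sA_2$-subdiagram in $S\p$, yielding $\SO(3)\subseteq\SL(3)$) or from $\sD_n$-type spherical roots with $n\geq 4$ (swapping the two forks of a $\sD_{n-1}$-subdiagram, yielding $\SO(2n-3)\subseteq\SO(2n-2)$); finally three mutually compatible $\sD_3$-type swaps acting on a common $\sD_4$-support generate the triality $S_3$, yielding $\sG_2\subseteq\SO(8)$.

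The main obstacle will be the combinatorial verification that no other configurations arise. This is where the rank-$2$ classification Table \ref{tab:rank2} is essential: whenever two distinct spherical roots $\sigma,\tau\in\Sigma$ both act non-trivially on the same orbit in $S\p$, the pair $(\sigma,\tau)$ must occur as a rank-$2$ weak spherical datum. A systematic inspection then rules out configurations not on the list, such as diagram automorphisms of $\sA_n$ for $n\geq 3$ or of $\sE_6$, non-diagonal involutions on $\sD_n$, or transitive permutations of simple factors of rank greater than one. Combining the classifications across all orbits recovers precisely the list \eqref{eq:WS-List}.
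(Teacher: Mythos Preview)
Your overall strategy matches the paper's: isolate those $\sigma\in\Sigma$ whose $n_\sigma$ acts nontrivially on $S\p$ (namely the types $\sD_{n\ge3}$ and $\sB_3$), and then use the rank-$2$ classification to control how such roots can overlap. The paper carries out the bookkeeping by classifying connected components of $S$ rather than of $S\p$: from \cref{tab:rank2} the only indecomposable rank-$2$ data with both roots of type $\sD_{n\ge3}$ or $\sB_3$ are supported on $\sA_5$ (two $\sD_3$'s) and on $\sE_6$ (two $\sD_5$'s), whence every component of $S$ is one of $\sA_{2n-1}$, $\sD_{n\ge4}$, $\sE_6$, $\sB_3$, and these yield the four nontrivial entries of the list.

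Your identification of the triality case is wrong, and this is a genuine gap. For a $\sD_3$-type root $\sigma$, the set $|\sigma|\cap S\p$ consists of the two fork vertices, which are \emph{disconnected} in $S\p$ because the middle vertex lies outside $S\p$. Hence $n_\sigma$ always swaps two separate $\sA_1$-components of $S\p$ and can never act inside a connected $\sD_4$-component. Three $\sD_3$-roots sharing a $\sD_4$-support in $S$ would (if such a datum existed, which it does not---no such pair occurs in \cref{tab:rank2}) produce three disconnected $\sA_1$'s in $S\p$ and give $\SL(2)\subseteq\SL(2)^3$, not $\sG_2\subseteq\SO(8)$. The triality actually comes from the $\sE_6$ row of \cref{tab:rank2}: two overlapping $\sD_5$-type roots for which $S\p$ is a connected $\sD_4$, and whose two fork-swaps together generate $S_3$ on its three outer vertices. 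The same lookup in \cref{tab:rank2} is also what guarantees that a $\sD_{n\ge4}$- or $\sB_3$-type root cannot combine with another on the same $S\p$-component except in this one $\sE_6$ configuration, so that the families $\SO(2m-1)\subseteq\SO(2m)$ and $\SO(3)\subseteq\SL(3)$ acquire no further automorphisms.
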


\begin{proof}

  Let $\Sigma'\subseteq\Sigma$ be the set of all $\sigma\in\Sigma$ of
  type $\sD_{n\ge3}$ or $\sB_3$ and let $S'\subseteq S$ be the union
  of their supports. The complement $S\p\setminus S'$ gives rise to
  factors of the form $H\subseteq H$ because $W_\cS$ acts trivially on
  it. Thus, we may assume that $\Sigma=\Sigma'$ and $S=S'$.

  To check how two of the roots can be combined, we look at
  \cref{tab:rank2}. There, we find only two indecomposable rank-2-data
  where both roots are of type $\sD_{n\ge3}$ or $\sB_3$. These are
  supported on $\sA_5$ (with two roots of type $\sD_3$) and $\sE_6$
  (with two roots of type $\sD_5$), respectively. This implies easily
  that every connected component of $\Sv$ is one of the following:
  \begin{equation}
    \begin{array}{lll}
      \sA_{2n-1}^\vee&n\ge2&
                             \begin{tikzpicture}[scale=\scalar]
                               \xp(0,0) \draw (0.1,0)--(0.9,0);
                               \op(1,0) \draw (1.1,0)--(1.9,0);
                               \xp(2,0) \draw (2.1,0)--(2.9,0);
                               \op(3,0) \draw (3.1,0)--(3.9,0);
                               \xp(4,0) \ddd(4,0) \xp(6,0) \draw
                               (6.1,0)--(6.9,0); \op(7,0) \draw
                               (7.1,0)--(7.9,0); \xp(8,0) \draw
                               [<->,densely dotted, thick] (0.1,0.1)
                               arc (150:30:1); \draw [<->,densely
                               dotted, thick] (2.1,0.1) arc
                               (150:30:1); \draw [<-,densely dotted,
                               thick] (4.1,0.1) arc (150:120:1); \draw
                               [<-,densely dotted, thick] (5.9,0.1)
                               arc (30:60:1); \draw [<->,densely
                               dotted, thick] (6.1,0.1) arc
                               (150:30:1);
                             \end{tikzpicture}
      \\
      \sD_n^\vee&n\ge4&
                        \vcenter{\hbox{$\begin{tikzpicture}[scale=\scalar]
                              \draw (0,1) circle (0);
                              \draw (0,-1) circle (0);
                              \op(0,0)
                              \draw (0.1,0)--(0.9,0);
                              \xp(1,0)
                              \draw (1.1,0)--(1.9,0);
                              \xp(2,0)
                              \ddd(2,0)
                              \xp(4,0)
                              \draw (4.1,0)--(4.9,0);
                              \xp(5,0)
                              \draw (5,0)--(5.5,0.5);
                              \draw (5,0)--(5.5,-0.5);
                              \xp(5.5,0.5)
                              \xp(5.5,-0.5)
                              \draw [<->,densely dotted, thick] (5.6,0.4) arc (30:-30:0.8);
                            \end{tikzpicture}$}}
      \\
      \sE_6^\vee&&
                   \begin{tikzpicture}[scale=\scalar]
                     \op(0,0) \draw (0.1,0)--(0.9,0); \xp(1,0) \draw
                     (1.1,0)--(1.9,0); \xp(2,0) \draw
                     (2.1,0)--(2.9,0); \xp(3,0) \draw
                     (3.1,0)--(3.9,0); \op(4,0) \xp(2,1) \draw
                     (2,0)--(2,1); \draw [<->,densely dotted, thick]
                     (1,0.15) arc (180:90:0.85); \draw [<->,densely
                     dotted, thick] (3,0.15) arc (0:90:0.85);
                   \end{tikzpicture}
      \\
      \sB_3^\vee&&
                   \begin{tikzpicture}[scale=\scalar]
                     \draw (0,1) circle (0); \xp(0,0) \draw
                     (0,0)--(1,0); \xp(1,0) \draw
                     (1.15,0.04)--(1.9,0.04); \draw
                     (1.15,-0.04)--(1.9,-0.04); \op(2,0) \draw (1.1,0)
                     -- (1.3,0.15); \draw (1.1,0) -- (1.3,-0.15);
                     \draw [<->,densely dotted, thick] (0.1,0.1) arc
                     (150:30:0.45);
                   \end{tikzpicture}
      \\
    \end{array}
  \end{equation}
  Here the action by the simple generators $n_\sigma\in W_\cS$ on
  $S\p$ is indicated by dotted arrows. Now each of these diagrams
  gives rise to one case of \eqref{eq:WS-List}.
\end{proof}

The group $(\LvS)^{W_\cS}$ is slightly too big since, in general, not
even $(\Tv)^{W_\cS}$ centralizes $\Gv_\cS$. To be precise, the
character group of $(\Tv)^{W_\cS}$ is
$\Xi((\Tv)^{W_\cS})=\Xiqv/\Lambda_0$ where $\Lambda_0$ is the group
generated by all $\chiv-w\chiv$ with $\chiv\in\Xiqv$ and $w\in
W_\cS$. Then the equality
\begin{equation}
  \chiv-n_\sigma(\chiv)=
  \begin{cases}
    \<\sigma\mid\chiv\>\siv&\text{if }\sigma\in\Sigma\cap\Phi\\
    \<\g_1\mid\chiv\>\gv_1+\<\g_2\mid\chiv\>\gv_2& \text{if
    }\sigma=\g_1+\g_2\in\Sigma\setminus\Phi.
  \end{cases}
\end{equation}
shows that $\Lambda_0$ is of finite index in $\ZZ\Sa$. Thus,
$T^{\Sa}\subseteq \Tv$, the subgroup with character group
$\Xiqv/\ZZ\Sa$, is of finite index in $(\Tv)^{W_\cS}$. Clearly it
equals the center of $\Ga_\cS$ and it is also the centralizer of
$\phi(\Gv_\cS)$ in $\Tv$.

\begin{lemma}\label{lemma:Lstructure}

  There is a unique subgroup $\LaS\subseteq (\LvS)^{W_\cS}$ of finite
  index such that
  \begin{equation}
    \Tv\cap \LaS=T^{\Siv}.
  \end{equation}
  Moreover, $\LaS$ is reductive with maximal torus
  $(T^{\Siv})^\circ=((\Tv)^{W_\cS})^\circ$. Its derived subgroup $L_0$
  is semisimple with $\LaS=T^{\Siv}\,L_0$. The simple roots of $L_0$
  are the restrictions of the simple roots $\dv$, $\d\in S\p$ of
  $\LvS$. Two restrictions are equal if and only if they lie in the
  same $W_\cS$-orbit.

\end{lemma}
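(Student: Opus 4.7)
The plan is to construct $\LaS$ explicitly as $T^{\Siv}\cdot L_0$, where $L_0$ is the derived subgroup of the identity component $M:=((\LvS)^{W_\cS})^\circ$, and then verify the assertions in turn.

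In characteristic zero, the finite group $W_\cS$ acts on $\LvS$ by algebraic automorphisms permuting the pinned generators $e_{\dv}$, $\d\in S\p$, as described in the proposition preceding the lemma. Consequently $M$ is reductive with maximal torus $((\Tv)^{W_\cS})^\circ$. Because $T^{\Siv}$ has finite index in $(\Tv)^{W_\cS}$, both share the same identity component, which gives the claimed equality $(T^{\Siv})^\circ=((\Tv)^{W_\cS})^\circ$. I then set $L_0:=[M,M]$ (semisimple) and $\LaS:=T^{\Siv}\cdot L_0$. This is a subgroup since $T^{\Siv}\subseteq(\LvS)^{W_\cS}$ normalizes the characteristic subgroup $L_0$ of $M$; it contains $M$, hence has finite index in $(\LvS)^{W_\cS}$; and its derived subgroup is $L_0$ because $T^{\Siv}$ is abelian and $L_0$ is semisimple. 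The intersection $\LaS\cap\Tv=T^{\Siv}$ is checked by writing $t=st_0$ with $s\in T^{\Siv}$, $t_0\in L_0$, and noting that $t_0\in L_0\cap\Tv=(T^{\Siv})^\circ\cap L_0\subseteq T^{\Siv}$.

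For the assertion about simple roots, I would decompose the $W_\cS$-fixed Lie algebra $(\flv_\cS)^{W_\cS}$ into weight spaces under $(T^{\Siv})^\circ$. Since $W_\cS$ permutes the pinned root vectors $e_{\dv}$, each $W_\cS$-orbit $O$ on the root system of $\LvS$ contributes a one-dimensional weight space spanned by the orbit sum $\sum_{\gv\in O}e_{\gv}$, carrying the common restriction of the $\dv\in O$ to $(T^{\Siv})^\circ$. Taking orbits within $S\p$ then yields a positive system for $L_0$ whose simple roots are precisely the distinct restrictions of the $\dv$, with two restrictions coinciding if and only if the corresponding simple roots lie in the same $W_\cS$-orbit.

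The main obstacle is uniqueness. For any closed $H\subseteq(\LvS)^{W_\cS}$ of finite index with $H\cap\Tv=T^{\Siv}$, the finite-index hypothesis gives $H\supseteq M\supseteq L_0$ and the intersection condition gives $H\supseteq T^{\Siv}$, hence $H\supseteq\LaS$. The reverse inclusion requires that every coset of $M$ in $H$ meet $T^{\Siv}$: applying Jordan decomposition $h=su$ one has $u\in H^\circ=M\subseteq\LaS$, reducing matters to semisimple $s$, and one then uses the case-by-case structure from \cref{prop:LWList} to conclude that any semisimple element of $(\LvS)^{W_\cS}$ is $M$-conjugate into $(\Tv)^{W_\cS}$, so that $H\cap\Tv=T^{\Siv}$ forces $s\in T^{\Siv}\cdot M=\LaS$. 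This case analysis, which exploits that $(\LvS)^{W_\cS}$ can only be disconnected in the few cases listed in \cref{prop:LWList}, is where the serious work is hidden.
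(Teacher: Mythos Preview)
Your construction of $\LaS$ as $T^{\Siv}\cdot L_0$ is correct and agrees with the paper's group, but the paper arrives at it differently and this difference matters for the uniqueness argument.

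The paper first extracts from \cref{prop:LWList} the single structural fact that the normalizer of $\Lq:=M$ in $\LvS$ equals $M\cdot Z(\LvS)$; since $(\LvS)^{W_\cS}$ normalizes its own identity component and $Z(\LvS)\subseteq(\Tv)^{W_\cS}$, this yields $(\LvS)^{W_\cS}=M\cdot(\Tv)^{W_\cS}$. With this in hand the paper defines $\LaS$ as the common kernel of the characters of $(\LvS)^{W_\cS}$ extending the elements of $\ZZ\Sa$ on $(\Tv)^{W_\cS}$; the check that these characters extend (i.e.\ vanish on the maximal torus of $L_0$) uses \cref{lemma:addclosed2}. Uniqueness is then immediate: any finite-index $H$ contains $M$, so $H=M\cdot(H\cap(\Tv)^{W_\cS})=M\cdot(H\cap\Tv)$, and the condition $H\cap\Tv=T^{\Siv}$ pins $H$ down.

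Your direct definition $\LaS=T^{\Siv}\cdot L_0$ avoids the character extension argument (and hence \cref{lemma:addclosed2}), which is a genuine simplification. However, your uniqueness argument via Jordan decomposition and $M$-conjugacy of semisimple elements into $(\Tv)^{W_\cS}$ is a detour: that conjugacy claim is itself a consequence of $(\LvS)^{W_\cS}=M\cdot(\Tv)^{W_\cS}$ (write $s=mt$ with $t$ central in $\LvS$, conjugate $m$ into $T_M$), so once you have the decomposition you may as well argue directly as above without invoking Jordan decomposition at all. The ``case analysis'' you flag as the serious work is exactly the normalizer statement the paper isolates up front.
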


\begin{proof}

  Let $\Lq:=((\LvS)^{W_\cS})^\circ$. Then \cref{prop:LWList} entails
  that the normalizer of $\Lq$ is generated by $\Lq$ and the center of
  $\LvS$. This implies that $(\LvS)^{W_\cS}$ itself is generated by
  $\Lq$ and $\Tv\cap (\LvS)^{W_\cS}=(\Tv)^{W_\cS}$.

  Now let $\gv\in\Xiqv=\Xi(\Tv)$ be a character and let $\g_0$ be its
  restriction to $(\Tv)^{W_\cS}$. Then $\g_0$ extends to a character
  of $\Lq$ if and only if $\gv$ is trivial on the maximal torus of the
  semisimple part $\Lq'$ of $\Lq$. That torus is generated by the
  images of the simple coroots which are orbit sums
  $\overline\d:=\sum W_\cS\d$ with $\d\in S\p$. Therefore $\g_0$
  extends if and only if $\<\overline\d\mid\gv\>=0$ for all
  $\d\in S\p$. We claim that this condition holds for
  $\gv\in\Sa$. Indeed, this is clear if $\g\in S$. Otherwise,
  $\gv=\gv_i\in\sa$ for some $\sigma\in\Sigma$. Now the assertion
  follows from \cref{lemma:addclosed2}.

  Thus we have shown that every element of $\ZZ\Sa$ extends to a
  character of $(\LvS)^{W_\cS}$ and we can define
  $\LaS\subseteq (\LvS)^{W_\cS}$ to be the common kernel of these
  characters.

  The rest of the assertions are now clear or well-known.
\end{proof}

Now we have:

\begin{theorem}\label{thm:centralizer}
  
  Let $\cS=(\Xi,\Sigma,S\p)$ be a \wss.. Then there is an adapted
  homomorphism $\phi:\Gv_\cS\to \Gv$ such that $\phi(\Gv_\cS)$ and
  $\LaS$ centralize each other.

\end{theorem}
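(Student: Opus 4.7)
The plan is to reduce the statement to a short list of Lie-algebra commutator computations, most of which are forced by the axioms of a \wss.\ and by \cref{lemma:addclosed2}, and then to pin down the remaining non-trivial commutators by exploiting the $\Ta\ad$-freedom from \cref{thm:unique}\,\ref{it:unique3}. Throughout, since both groups are connected (up to finite index), I would work with their Lie algebras and verify that the generating sets commute.

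By \cref{lemma:Lstructure}, $\LaS$ is generated by $(T^{\Siv})^\circ$ together with the root subgroups of its semisimple derived group $L_0$, whose simple root vectors are the $W_\cS$-orbit sums $e_{\overline{\delta}^\vee} := \sum_{\delta' \in W_\cS\delta}e_{(\delta')^\vee}$ for $\delta \in S\p$. Meanwhile $\phi(\Gv_\cS)$ is generated by $\eta(\Av)$ and by the images $\phi(e_{\pm\siv}) = \sum_{\gv \in \sa}c^{\pm}_{\gv}\,e_{\pm\gv}$, $\sigma \in \Sigma$, for some nonzero constants $c^\pm_{\gv}$ depending on $\phi$. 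Three of the four kinds of commutators vanish regardless of the choice of $\phi$: the two tori both lie in $\Tv$ so they commute; $T^{\Siv}$ is by construction the intersection of the kernels of all $\gv \in \Sa$, so it lies in $Z(\Ga_\cS)$ and centralises $\phi(\Gv_\cS) \subseteq \Ga_\cS$; and axiom \ref{it:wss2} of \cref{def:WSS} gives $\<\Xi \mid \delta^\vee\> = 0$ for $\delta \in S\p$, so $\delta^\vee$ is trivial on $\eta(\Av)$, whence $\eta(\Av)$ commutes with each $e_{\pm\delta^\vee}$ and therefore with all of $\LaS$.

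The remaining brackets are $[\phi(e_{\pm\siv}),\,e_{\pm\overline{\delta}^\vee}]$. Expanding each one into inner brackets $[e_{\pm\gv}, e_{\pm(\delta')^\vee}]$ with $\gv \in \sa$ and $\delta' \in W_\cS\delta$, \cref{lemma:addclosed2} discards every summand except in the exceptional configuration $\sigma = \g_1 + \g_2 \in \Sigma \setminus \Phi$ with $\delta' \in \{\delta_1,\delta_2\}$. In that case $n_\sigma$ swaps $\delta_1, \delta_2$ while every other $n_{\sigma'}$ fixes both, so the orbit $W_\cS\delta_1$ equals $\{\delta_1,\delta_2\}$ and contributes the single sum $e_{\delta_1^\vee}+e_{\delta_2^\vee}$. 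Moreover the identity $\g_1^\vee - \g_2^\vee = \delta_1^\vee - \delta_2^\vee$ forces $\g_1^\vee + \delta_2^\vee = \g_2^\vee + \delta_1^\vee$, so the two surviving brackets $[e_{\g_1^\vee}, e_{\delta_2^\vee}]$ and $[e_{\g_2^\vee}, e_{\delta_1^\vee}]$ land in the same one-dimensional root space of $\Gv$. Consequently $[\phi(e_{\siv}),\,e_{\delta_1^\vee}+e_{\delta_2^\vee}] = 0$ reduces to one nonzero linear relation prescribing the ratio $c^+_{\g_2^\vee}/c^+_{\g_1^\vee}$, its precise value being read off from the Chevalley structure constants of $\fgv$.

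The main step is to realise all of these prescribed ratios simultaneously by a single $\phi$. By \cref{thm:unique}\,\ref{it:unique3}, $\Ta\ad$ acts simply transitively on adapted homomorphisms, rescaling each $c^+_{\gv}$ by $\gv(t)$, and since $\Sa$ is linearly independent the ratio $c^+_{\g_2^\vee}/c^+_{\g_1^\vee}$ can be set to any nonzero value independently for each $\sigma \in \Sigma \setminus \Phi$; choosing the unique $\phi$ that meets every such requirement settles the positive-root case. Because $(\g_1^\vee, \g_2^\vee)$ is strongly orthogonal and $\phi(h_\sigma) = h_{\g_1^\vee}+h_{\g_2^\vee}$, the $\SL_2$-relations in $\Gv_\cS$ force $c^-_{\g_i^\vee} = (c^+_{\g_i^\vee})^{-1}$; the main obstacle is then to verify that the linear relation imposed by $[\phi(e_{-\siv}),\,e_{\delta_1^\vee}+e_{\delta_2^\vee}] = 0$ is automatically equivalent to the positive one. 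This is an instance of a standard Chevalley-basis identity (the Jacobi relation $N_{-\mu,\mu+\nu}\,N_{\mu,\nu} = \<\mu+\nu \mid \mu^\vee\>$ applied inside the rank-two subsystem generated by $\{\g_i^\vee,\delta_j^\vee\}$), and the sign compatibility only needs to be checked for the few exceptional types $\sD_{n\ge3}$ and $\sB_3$ appearing in \cref{tab:NSR}, making it a finite and direct verification.
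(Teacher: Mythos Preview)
Your overall architecture matches the paper's: reduce to generators, dispose of three of the four commutator types by torus considerations and axiom~\ref{it:wss2}, apply \cref{lemma:addclosed2} to kill almost all of the remaining brackets, and then use the freedom in the choice of $\phi$ to arrange the surviving exceptional brackets to vanish. Where you diverge is in the treatment of that exceptional case ($\sigma=\g_1+\g_2\notin\Phi$ with $\delta\in\{\delta_1,\delta_2\}$). The paper does not manipulate Chevalley structure constants at all; instead it identifies the rank-two sublattice generated by $\gv_1,\gv_2,\dv_1,\dv_2$ as the root system of an explicit subgroup $J\subseteq\Gv$ isogenous to $\SL(4)$ (type $\sD_{n\ge3}$) or $\Sp(6)$ (type $\sB_3$), and then exhibits by hand an embedding $\SL(2)\times\SL(2)\hookrightarrow\SL(4)$ (resp.\ $\SL(2)\times\SO(3)\hookrightarrow\Sp(6)$) via a tensor-product basis in which the first factor is adapted and the second factor is $H_\delta$. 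Because these are honestly commuting subgroups, all four sign combinations $[\phi(e_{\pm\siv}),e_{\pm\overline\delta^\vee}]$ vanish simultaneously for free; the structure-constant compatibility you flag as ``a finite and direct verification'' never arises. The paper then glues the local $\phi_\sigma$'s into a global $\phi$ via \cref{thm:unique}\,\ref{it:unique2}, rather than invoking the $\Ta\ad$-torsor structure directly.

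Two specific issues with your write-up. First, the sentence ``every other $n_{\sigma'}$ fixes both [$\delta_1,\delta_2$], so the orbit $W_\cS\delta_1$ equals $\{\delta_1,\delta_2\}$'' is false in general: in the $\sA_5$ entry of \cref{tab:rank2} two $\sD_3$-type spherical roots share the middle simple root, and the $W_\cS$-orbit has three elements (cf.\ the $\sA_{2n-1}$ diagram in the proof of \cref{prop:LWList}). This is harmless for your argument, since the extra orbit members commute with $\phi(e_{\siv})$ by \cref{lemma:addclosed2} anyway, but the assertion as written is wrong. Second, and more substantively, the compatibility you need between the $(+,+)$, $(+,-)$, $(-,+)$, $(-,-)$ brackets is not quite the Jacobi identity you quote: you need the ratio $N_{\gv_1,\dv_2}/N_{\gv_2,\dv_1}$ to agree with $N_{\gv_1,-\dv_1}/N_{\gv_2,-\dv_2}$, and then again with the corresponding negative-root ratios. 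This is checkable inside the explicit $\sA_3$ and $\sC_3$ subsystems, but you have not actually done it, and it is exactly the computation the paper's matrix models are designed to avoid.
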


\begin{proof}

  For $\sigma\in\Sigma$ consider the rank-$1$-subgroup
  $F_\sigma:=\Gv_\cS(\siv)$. We first show that there is an adapted
  homomorphism $\phi_\sigma:F_\sigma\to \Gv$ whose image commutes with
  $\LaS$. For this let $L^\sigma:=\La_{\cS_\sigma}\subseteq\LvS$ be
  the subgroup corresponding to the localized system
  $\cS_\sigma:=(\Xi,\{\sigma\},S\p)$. Since $\LaS\subseteq L^\sigma$
  it suffices to find $\phi_\sigma$ whose image commutes with
  $L^\sigma$. We already know that $\phi_\sigma(F_\sigma)$ commutes
  with $\Tv\cap L^\sigma$. For $\d\in S\p$ let
  $H_\d\subseteq L^\sigma$ be the semisimple rank-1-subgroup whose
  positive root is the restriction $\bar\dv$ of $\dv$ to
  $\Tv\cap L^\sigma$ (see \cref{lemma:Lstructure}). Thus we have to
  show that $\phi_\sigma(F_\sigma)$ commutes with $H_\d$.

  Assume first that either $\sigma\in\Phi$ or that $\d\ne\d_i$ if
  $\sigma\not\in\Phi$. Then $H_\d$ commutes with all rank-1-subgroups
  $\Gv(\gv)$ with $\gv\in\sa$ by \cref{lemma:addclosed2}. Since
  $\phi_\sigma(F_\sigma)\subseteq\prod_{\gv\in\sa} \Gv(\gv)$, this
  implies that $\phi_\sigma(F_\sigma)$ commutes with $H_\d$.

  So assume that $\sigma=\g_1+\g_2\not\in\Phi$ and $\d=\d_i\in
  S\p$. Then there are the following two cases:

  The spherical root $\sigma$ is of type $\sD_n$ with $n\ge3$: then
  one verifies that the root subsystem of $\Phiv$ which is generated
  by $\gv_1$, $\gv_2$, $\dv_1$, and $\dv_2$ is of type $\sA_3$ with
  simple roots $\dv_1$, $\bv:=\gv_1-\dv_1=\gv_2-\dv_2$, and
  $\dv_2$. This root system is additively closed in $\Phiv$. Thus, it
  corresponds to a subgroup $J$ of $\Gv$ which is isogenous to
  $\SL(4)$. Let $U=\Span_\CC(u_1,u_2)$ and $V=\Span_\CC(v_1,v_2)$ be
  two copies of the defining representation of $\SL(2)$. Then the
  $\SL(2)\times\SL(2)$-action on $U\otimes V$ defines a homomorphism
  $\SL(2)\times\SL(2)\to\SL(4)$. More precisely, this homomorphism
  depends on the choice of an ordered basis which we take
  ($u_1\otimes v_1$, $u_1\otimes v_2$, $u_2\otimes v_1$,
  $u_2\otimes v_2$). Now one checks that the first factor is mapped to
  the product $J_{\gv_1}J_{\gv_2}=J_{\e_1-\e_3}J_{\e_2-\e_4}$ (where
  the $\e_i$ denote the canonical weights of the $\SL(4)$-module
  $\CC^4$). So this homomorphism is adapted. The second factor is
  mapped to the diagonal of
  $J_{\dv_1}J_{\dv_2}=J_{\e_1-\e_2}J_{\e_3-\e_4}$ which therefore
  equals $H_\d$. Both factors commute, which proves the assertion.

  The second case is when $\sigma$ is of type $\sB_3$. The additively
  closed subsystem of $\Phiv$ which is generated by $\gv_1$, $\gv_2$,
  $\dv_1$, and $\dv_2$ is the dual root system $\sC_3$. Thus $\Gv$
  contains a subgroup $J$ which is isogenous to $\Sp(6)$. Now consider
  the $2$-dimensional $\SL(2)$-representation $U=\Span_\CC(u_1,u_2)$
  and the $3$-dimensional $\SO(3)$-representation
  $V=\Span_\CC(v_1,v_2,v_3)$ (leaving invariant the quadratic form
  $q(x_1,x_2,x_3)=2x_1x_3-x_2^2$). The choice of the basis
  \begin{equation}
    u_1\otimes v_1, u_1\otimes v_2, u_1\otimes v_3, u_2\otimes v_1,
    u_2\otimes v_2, u_2\otimes v_3\in U\otimes V
  \end{equation}
  defines a homomorphism $\SL(2)\times\SO(3)\to\Sp(6)\subseteq\SL(6)$
  where the symplectic group is defined with respect to the
  skew-symmetric matrix ${\rm antidiag}(1,-1,1,-1,1,-1)$. Now one
  checks again that the first factor is mapped to the product
  $J_{\gv_1}J_{\gv_2}=J_{\e_1+\e_3}J_{2\e_2}$ (with $\e_i$ being
  weights of $\Sp(6)$) while the second factor goes to
  $H_\d\subseteq\GL(3)\subseteq\Sp(6)$.

  This finishes the proof of the assertion that for every $\sigma$
  there is $\phi_\sigma$ such that $\phi_\sigma(F_\sigma)$ commutes
  with $\LaS$. But, as we showed in
  \cref{thm:unique}\,\ref{it:unique2}, any family of adapted
  homomorphisms $(\phi_\sigma)_{\sigma\in\Sigma}$ can be extended to a
  unique adapted homomorphism $\phi$. Then $\LaS$ commutes with all
  subgroups $\phi(F_\sigma)$ and therefore with $\phi(\Gv_\cS)$.
\end{proof}

\begin{definition}

  A homomorphism satisfying the assertion of \cref{thm:centralizer}
  will be called \emph{very adapted}.

\end{definition}

It is easy to see from the proof that all other very adapted
homomorphisms are of the form $\Ad(t)\circ\phi$ where $t\in \Tv\ad$
with $\gamma_1(t)=\gamma_2(t)$ for all roots
$\sigma=\gamma_1+\gamma_2$ of type $\sD_{n\ge3}$ or $\sB_3$.

\begin{corollary}\label{cor:GZC}
  Let $\phi:\Gv_\cS\to\Gv$ be a very adapted homomorphism. Then the
  map
  \begin{equation}
    \Gv_\cS\times^{Z(\Gv_\cS)}\LaS \to \Gv:[g,l]\mapsto\phi(g)l
  \end{equation}
  is an injective homomorphism where $Z(\Gv_\cS)$ is the center of
  $\Gv_\cS$ (with character group $\Xiv/\ZZ\Siv$).

\end{corollary}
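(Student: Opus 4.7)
The plan is to verify, in three steps, that the stated map is well-defined, is a group homomorphism, and is injective.

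The first step is to check that $\phi$ sends $Z(\Gv_\cS)$ into $\LaS$, so that $Z(\Gv_\cS)$ embeds anti-diagonally via $z\mapsto(z,\phi(z)^{-1})$ into $\Gv_\cS\times\LaS$ and the amalgamated product on the left makes sense. Since $Z(\Gv_\cS)\subseteq\Av$, its image under $\phi$ equals $\eta(Z(\Gv_\cS))\subseteq\Tv$. As the center of $\phi(\Gv_\cS)$, this image commutes with all of $\phi(\Gv_\cS)$, so it lies in the centralizer of $\phi(\Gv_\cS)$ in $\Tv$. That centralizer was identified as $T^{\Sa}=\Tv\cap\LaS$ in the discussion preceding \cref{lemma:Lstructure}. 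With this embedding in hand, the map $(g,l)\mapsto\phi(g)l$ is a group homomorphism because $\phi(\Gv_\cS)$ and $\LaS$ commute elementwise---this is the content of $\phi$ being very adapted---and it is constant on $Z(\Gv_\cS)$-orbits by construction, hence descends to $\Gv_\cS\times^{Z(\Gv_\cS)}\LaS$.

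The core step for injectivity is the identity $\phi(\Gv_\cS)\cap\LaS=\phi(Z(\Gv_\cS))$. Any $h$ in the left-hand side lies in $\phi(\Gv_\cS)$ and is centralized by $\phi(\Gv_\cS)$ (since $\LaS$ centralizes the image), so $h\in Z(\phi(\Gv_\cS))$. At this point I would invoke the remark after \cref{thm:main}: the kernel $K$ of $\phi$ is finite and contained in $\Av$, hence central in the connected reductive group $\Gv_\cS$. The standard identity $Z(H/N)=Z(H)/N$ for $N$ a central subgroup of a connected reductive $H$ then yields $Z(\phi(\Gv_\cS))=\phi(Z(\Gv_\cS))$, giving the inclusion $\subseteq$; the reverse inclusion is immediate from the first step.

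Given this, injectivity is a formal consequence. Suppose $\phi(g)l=e$ for some $(g,l)\in\Gv_\cS\times\LaS$. Then $l^{-1}=\phi(g)\in\phi(\Gv_\cS)\cap\LaS=\phi(Z(\Gv_\cS))$, so modulo $\ker\phi\subseteq Z(\Gv_\cS)$ we conclude $g\in Z(\Gv_\cS)$, and then $(g,l)=(g,\phi(g)^{-1})$ represents the trivial class. I do not foresee a serious obstacle: once the centralizer information from \cref{thm:centralizer} and \cref{lemma:Lstructure} is in hand, the corollary is essentially bookkeeping, with the only slightly delicate point being the identity $Z(\phi(\Gv_\cS))=\phi(Z(\Gv_\cS))$, which rests on the centrality of $\ker\phi$.
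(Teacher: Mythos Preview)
Your argument is correct and follows essentially the same route as the paper. The paper's proof is the one-liner ``Follows from \cref{thm:centralizer} and \cref{cor:minrank}\,\ref{it:minrank1}'': the first reference gives the commutation (hence the homomorphism property), and the second gives $Z(\Gv_\cS)=\phi^{-1}(Z(\Ga_\cS))$, which together with $Z(\Ga_\cS)=T^{\Sa}=\Tv\cap\LaS$ yields both well-definedness and the key identification $\phi^{-1}(\LaS)=Z(\Gv_\cS)$. You reach the same identification by the slightly more elementary observation that $\ker\phi\subseteq\Av$ is central, whence $Z(\phi(\Gv_\cS))=\phi(Z(\Gv_\cS))$; this is a perfectly good substitute for the citation of \cref{cor:minrank}\,\ref{it:minrank1}.
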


\begin{proof}

  Follows from \cref{thm:centralizer} and
  \cref{cor:minrank}\ref{it:minrank1}.
\end{proof}

In \cite{SV}, Sakellaridis-Venkatesh conjecture that the image of the
dual group is centralized by the so-called \emph{principal $\SL(2)$}
of $\Lv_\cS$. To establish this, that $\Phi\p$ is the root system of
$L_{\cS}$ and that $\rho\p\in\Xiq$ is the half-sum of positive
roots. Since $2\rho\p\in\Xiq$ we may regard it as a $1$-parameter
subgroup $\G_m\to \Tv$. It is well-known that there is a homomorphism
$\psi:\SL(2)\to \LvS$, called the principal $\SL(2)$, such that
$\res_{\G_m}\psi=2\rho\p$. This homomorphism is unique up to
composition with $\Ad(t)$, $t\in \Tv$. It will be normalized by
requiring that $\psi$ maps
$e=\begin{pmatrix}0&1\\0&0\end{pmatrix}\in\sl(2)$ to
$e_L:=\sum_{\delta\in S\p}e_{\dv}$.

\begin{proposition}\label{cor:arthur}

  Let $\psi:\SL(2)\to \LvS$ be the principal $SL(2)$ and let
  $\phi:\Gv_\cS\to\Gv$ be adapted. Then
  \begin{equation}
    \Gv_\cS\times\SL(2)\to\Gv:(g,l)\mapsto\phi(g)\psi(l)
  \end{equation}
  is a group homomorphism if and only if $\phi$ is very adapted.

\end{proposition}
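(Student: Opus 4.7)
The map $(g,l)\mapsto\phi(g)\psi(l)$ is a group homomorphism precisely when $\phi(\Gv_\cS)$ centralises $\psi(\SL(2))$ in $\Gv$, so the task is to compare this centralisation with very-adaptedness of $\phi$. My plan is to first locate $\psi(\SL(2))$ inside $\LaS$, which gives the ``if'' direction for free, and then use \cref{thm:unique}\,\ref{it:unique3} to reduce the converse to a Chevalley-bracket computation in rank $2$. The generators $h_L=2\rho\p$ and $e_L=\sum_{\d\in S\p}e_{\dv}$ of the principal $\fs\fl(2)$-triple are $W_\cS$-invariant since $W_\cS$ permutes $S\p$, and $f_L$ is $W_\cS$-invariant by uniqueness; hence $\psi(\SL(2))\subseteq(\LvS)^{W_\cS}$, and being connected semisimple it sits in the derived subgroup $L_0\subseteq\LaS$ of \cref{lemma:Lstructure}. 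The relation $\<\dv,2\rho\p\>=2$ for $\d\in S\p$ even identifies $\psi(\SL(2))$ as a principal $\SL(2)$ of $L_0$. If $\phi$ is very adapted then $\phi(\Gv_\cS)$ centralises $\LaS\supseteq\psi(\SL(2))$, settling one direction immediately.

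For the converse I would fix a very adapted $\phi_0$ (from \cref{thm:centralizer}) and write $\phi=\Ad(\tilde t)\circ\phi_0$ for a lift $\tilde t\in\Tv$ of the unique $t\in\Ta\ad$ supplied by \cref{thm:unique}\,\ref{it:unique3}. The commutation becomes that $\phi_0(\Gv_\cS)$ centralises $\Ad(\tilde t^{-1})\psi(\SL(2))$; since $h_L\in\Tv$ is $\Ad(\Tv)$-fixed, this reduces to commutation with $\Ad(\tilde t^{-1})e_L=\sum_{\d\in S\p}\dv(\tilde t)^{-1}e_{\dv}$. Testing this $\sigma$-by-$\sigma$ and using \cref{lemma:addclosed2}, the contributions from $\sigma\in\Phi$ and from $\sigma$ of type $\sD_2$ drop out automatically (their associated coroots are strongly orthogonal to every $\dv\in S\p$), while for $\sigma\in\Sigma\setminus\Phi$ of type $\sD_{n\ge3}$ or $\sB_3$ the problem reduces to commuting with $\dv_1(\tilde t)^{-1}e_{\dv_1}+\dv_2(\tilde t)^{-1}e_{\dv_2}$. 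Using the explicit pinning from the proof of \cref{thm:centralizer}, so that $\phi_0(e_{\siv})=e_{\gv_1}+e_{\gv_2}$, a direct bracket computation collapses to a nonzero scalar multiple of $\bigl(\dv_1(\tilde t)^{-1}-\dv_2(\tilde t)^{-1}\bigr)e_{\dv_1+\gv_2}$, since $\dv_1+\gv_2=\dv_2+\gv_1$ is the unique root produced by the cross-brackets while the diagonal brackets vanish by strong orthogonality. Hence the commutation is equivalent to $\dv_1(\tilde t)=\dv_2(\tilde t)$; via the identity $\dv_1-\dv_2=\gv_1-\gv_2$ from \cref{lemma:assoc}\,\ref{it:assoc3}, this is $\g_1(\tilde t)=\g_2(\tilde t)$, which by the remark following \cref{thm:centralizer} is precisely the very-adapted condition on $\phi$.

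The main technical obstacle I expect is the sign bookkeeping in the Chevalley bracket: one must confirm that in the pinning normalising $e_L$, the two relevant nonzero structure constants $N_{\dv_1,\gv_2}$ and $N_{\dv_2,\gv_1}$ occur with opposite signs, so that the vanishing condition is the difference rather than the sum of the $\dv_i(\tilde t)^{-1}$. This is in fact forced by $\phi_0$ being very adapted (so that $c_1=c_2=1$ already solves the commutation), but it still needs to be verified by hand for both the $\sD_n$ embedding $\SL(2)\times\SL(2)\hookrightarrow\SL(4)$ and the $\sB_3$ embedding $\SL(2)\times\SO(3)\hookrightarrow\Sp(6)$ from the proof of \cref{thm:centralizer}. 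Once this is done, the argument is a clean translation between the $\dv_i$ and $\gv_i$ conditions via \cref{lemma:assoc}\,\ref{it:assoc3}.
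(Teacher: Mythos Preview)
Your argument is correct, and the ``if'' direction matches the paper's exactly. For the converse, however, the paper takes a cleaner structural route that avoids all bracket bookkeeping. It observes that $\psi$ is also a principal $\SL(2)$ of the semisimple part $L_0$ of $\LaS$ (since the simple root vectors of $L_0$ are the $W_\cS$-orbit sums, which add up to $e_L$), and then uses a one-parameter limit: for any simple root $\beta$ of $L_0$, choose a coweight $\omega^\vee$ proportional to the fundamental coweight for $\beta$, so that $\lim_{t\to0}t^{-n}\Ad(\omega^\vee(t))e_L=e_\beta$; since $\phi(\Gv_\cS)$ centralises the whole one-parameter family, it centralises the limit $e_\beta$, and likewise $e_{-\beta}$, hence all of $L_0$ and therefore $\LaS$. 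This argument is independent of the torsor parametrisation, requires no case analysis over $\sD_n/\sB_3$, and immediately generalises (as noted in the remark following the proof) to any $\psi$ whose Dynkin characteristic on $S\p$ is $W_\cS$-invariant.

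Your approach via $\phi=\Ad(\tilde t)\circ\phi_0$ and the explicit bracket is also valid and has the virtue of making the torsor description of very adapted homomorphisms visible; and you are right that the sign obstacle you flag is in fact self-resolving: once you know $\phi_0$ is very adapted, the vanishing of $[a\,e_{\gv_1}+b\,e_{\gv_2},\,e_{\dv_1}+e_{\dv_2}]$ forces $aN_{\gv_1,\dv_2}+bN_{\gv_2,\dv_1}=0$, so the general bracket collapses to $aN_{\gv_1,\dv_2}(c_2-c_1)e_{\gv_1+\dv_2}$ without ever fixing signs or coefficients by hand. Thus no separate verification in the $\SL(4)$ or $\Sp(6)$ models is needed.
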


\begin{proof}

  Clearly both $2\rho\p$ and $e_L$ are $W_\cS$-invariant which implies
  that $\psi$ factors through $\LaS$. Hence, if $\phi$ is very
  adapted, the assertion holds by definition.

  Conversely, assume that $\phi(\Gv_\cS)$ and $\psi(\SL(2))$ commute
  with each other. We have to show that then $\phi(\Gv_\cS)$ commutes
  with $\LaS$. Since $\phi(\Gv_\cS)$ centralizes
  $T^{\Siv}=\Tv\cap\LaS$ it suffices to show that it centralizes the
  semisimple part $L_0:=(\LaS)'$, as well. From the construction of
  $\LaS$ it follows that the simple root vectors of $L_0$ are
  $W_\cS$-orbit sums. Their sum is therefore the sum of all simple
  root vectors of $L_\cS$. This implies that $\psi$ is also a
  principal $\SL(2)$ with respect to $L_0$.

  Now let $\beta$ be a simple root of $L_0$. Then there is $n\ge1$ and
  a coweight $\omega^\vee:\G_m\to\Tv\cap L_0$ which is $n$ times a
  fundamental coweight, i.e., with
  $\<\beta'\mid\omega_\beta\>=n\delta_{\beta\,\beta'}$ for all simple
  roots $\beta'$ of $L_0$. Let $e:=\sum_\beta e_\beta$. Then
  $\phi(\Gv_\cS)$ will centralize $t^{-n}\omega^\vee(t)e$ for all
  $t\in\G_m$ and therefore it limit for $t\to0$ which is
  $e_\beta$. The same argument works for the negative simple root
  vectors $e_{\beta}$. Hence $\phi(\Gv_\cS)$ centralizes the Lie
  algebra of $L_0$ and therefore $L_0$, itself.
\end{proof}

\begin{remark}
  
  Let $\psi:\SL(2)\to \LvS$ be any homomorphism with
  $\psi(\G_m)\subseteq \Tv$ and let $\rho\in\Xiq$ be the corresponding
  $1$-parameter subgroup. Then $\psi$ is uniquely determined by the
  numbers $(\<\rho\mid\dv\>)_{\delta\in S\p}$, the so-called Dynkin
  characteristic of $\psi$. The characteristic of the principal
  $\SL(2)$ is $2$ for all $\delta$. Now clearly the same argument
  works for any $\psi$ whose characteristic is $W_\cS$-invariant.

\end{remark}

Next we address the problem of when $\LaS$ is the full centralizer of
$\Gv_\cS$ in $\Gv$. For this we need the non-degeneracy condition
(cf.\ \cref{def:nondeg}).

\begin{theorem}\label{thm:fullcentralizer}

  Let $\cS=(\Xi,\Sigma,S\p)$ be a non-degenerate \wss. and let
  $\phi:\Gv_\cS\to\Gv$ be very adapted. Then:

  \begin{enumerate}

  \item\label{it:full1} The centralizer of $\phi(\Av)$ in $\Gv$ is
    $\LvS$.

  \item\label{it:full2} The centralizer of $\phi(\Gv_\cS)$ in $\Gv$ is
    $\LaS$.

  \end{enumerate}

\end{theorem}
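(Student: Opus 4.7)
For part (i), my approach is to invoke the standard structure theorem for centralizers of subtori in a connected reductive group: since $\phi(\Av)$ is a subtorus of $\Tv$, its centralizer in $\Gv$ is the Levi subgroup containing $\Tv$ whose root groups are the $\Gv(\av)$ with $\av$ trivial on $\phi(\Av)$. As $\phi|_{\Av}=\eta$ is dual to the lattice inclusion $\Xi\hookrightarrow\Xiq$, triviality of $\av$ on $\phi(\Av)$ translates to $\<\Xi\mid\av\>=0$. Non-degeneracy then forces $\a\in\Phi\p$, identifying the centralizer with $\LvS$.

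For part (ii), the inclusion $\LaS\subseteq Z:=Z_{\Gv}(\phi(\Gv_\cS))$ is immediate from very-adaptedness (\cref{thm:centralizer}). Using (i), $Z\subseteq\LvS$, reducing the task to showing $Z_{\LvS}(\phi(\Gv_\cS))\subseteq\LaS$. My plan is to establish this via two observations. First, I would prove $Z\subseteq(\LvS)^{W_\cS}$ by analyzing, for each $\sigma\in\Sigma$, the constraint that commuting with $\phi_\sigma(F_\sigma):=\phi(\Gv_\cS(\siv))$ forces $n_\sigma$-invariance. Only $\sigma\not\in\Phi$ of type $\sD_{n\ge3}$ or $\sB_3$ require attention, since $n_\sigma$ otherwise acts trivially on $S\p$. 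In them, the construction in the proof of \cref{thm:centralizer} realizes $\phi_\sigma(F_\sigma)$ as the first factor of $\SL(2)\times\SL(2)\to J$ with $J\subseteq\Gv$ isogenous to $\SL(4)$, respectively $\SL(2)\times\SO(3)\to J$ with $J$ isogenous to $\Sp(6)$, where $J$ is the subgroup generated by the root groups for $\{\gv_1,\gv_2,\dv_1,\dv_2\}$. Its centralizer in $J$ is, modulo finite kernel, the second factor, which meets $\LvS$ in the $n_\sigma$-fixed subgroup of the rank-$2$ Levi generated by $H_{\dv_1}, H_{\dv_2}$ --- the diagonal $\SL(2)$ in the $\sD_n$-case, $\SO(3)\subseteq\SL(3)$ in the $\sB_3$-case. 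By \cref{lemma:addclosed2}, the remaining $H_\d$ ($\d\in S\p$) already commute with $\phi_\sigma(F_\sigma)$, giving the desired $n_\sigma$-invariance; summing over $\sigma$ yields $Z\subseteq(\LvS)^{W_\cS}$.

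Second, I would observe that $Z\cap\Tv=T^{\Sa}$ by direct computation (a torus element $t$ centralizes $\phi(\Gv_\cS)$ iff $\gv(t)=1$ for every $\gv\in\Sa$), and that this coincides with $\Tv\cap\LaS$ by \cref{lemma:Lstructure}. I would then combine both observations using the decomposition $(\LvS)^{W_\cS}=(\Tv)^{W_\cS}\cdot L_0$, where $L_0=(\LaS)'\subseteq\LaS\subseteq Z$, extracted from \cref{lemma:Lstructure}: writing $z\in Z$ as $z=tl$ with $l\in L_0\subseteq Z$, we get $t=zl^{-1}\in Z\cap\Tv=\Tv\cap\LaS$, so $z\in\LaS$.

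The main technical obstacle I expect is the first observation: verifying that the centralizer of the first factor inside $J$, restricted to $\LvS$, gives exactly the $n_\sigma$-fixed rank-$2$ Levi. For $J$ isogenous to $\SL(4)=\SL(U\otimes V)$ this rests on the classical identity $Z_{\SL(U\otimes V)}(\SL(U)\otimes 1)=1\otimes\SL(V)$ modulo centres; the $\Sp(6)$-case follows analogously using the skew form specified in the proof of \cref{thm:centralizer}. Identifying the resulting subgroup with the diagonal (resp.\ with $\SO(3)$) then amounts to recognising $n_\sigma$ as the standard diagram automorphism on the $\sA_1\times\sA_1$- (resp.\ $\sA_2$-) Levi of $\LvS$.
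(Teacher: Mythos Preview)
Your treatment of part~\ref{it:full1} and the overall scaffolding of part~\ref{it:full2} --- the inclusions $\LaS\subseteq Z\subseteq\LvS$, the computation $Z\cap\Tv=T^{\Sa}$, and the final reduction via $(\LvS)^{W_\cS}=(\Tv)^{W_\cS}\cdot L_0$ --- are correct and agree with the paper.

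The substantive difference, and the gap, is in your ``first observation''. You compute the centralizer of $\phi_\sigma(F_\sigma)$ \emph{inside $J$} and then remark that the remaining $H_\d$ also commute with $\phi_\sigma(F_\sigma)$. But both of these facts only exhibit elements \emph{lying in} $Z_{\LvS}(\phi_\sigma(F_\sigma))$; neither gives the upper bound $Z_{\LvS}(\phi_\sigma(F_\sigma))\subseteq(\LvS)^{n_\sigma}$ you need. A general $z\in\LvS$ centralizing $\phi_\sigma(F_\sigma)$ has no reason to lie in $J$, and $\LvS$ does not split as a product of the rank-$2$ Levi and a complementary factor: for $\sigma$ of type $\sD_{n\ge3}$, the simple roots $\dv_1=\av_{n-1}$ and $\dv_2=\av_n$ sit in the same simple component of $(\LvS)_{\rm der}$ as $\av_2,\ldots,\av_{n-2}$, so no clean factorization is available. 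Your identity $Z_{\SL(U\otimes V)}(\SL(U)\otimes1)=1\otimes\SL(V)$ is correct, but it constrains only $z\in\LvS\cap J$, not arbitrary $z\in\LvS$.

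The paper obtains $Z\subseteq(\LvS)^{W_\cS}$ by a different and shorter route that bypasses any explicit centralizer computation. It lifts $s_\sigma\in W_\cS$ to $\tilde s_\sigma\in\Gv_\cS$ and observes that $\tilde n_\sigma:=\phi(\tilde s_\sigma)$, lying diagonally in $\prod_{\gv\in\sa}\Gv(\gv)$, is simultaneously a lift of $n_\sigma\in W$. Thus $\Ad(\tilde n_\sigma)$ normalizes $\LvS$ and induces the permutation $n_\sigma$ on $(S\p)^\vee$. Since $\phi$ is very adapted, $\tilde n_\sigma$ also centralizes $\LaS$, hence fixes every orbit sum $\sum_{\d'\in W_\cS\d}e_{\d'^\vee}$; this pins $\Ad(\tilde n_\sigma)|_{\LvS}$ down to precisely the pinned graph automorphism used to define the $W_\cS$-action. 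Now any $z\in Z$ commutes with $\tilde n_\sigma\in\phi(\Gv_\cS)$, so $n_\sigma(z)=\Ad(\tilde n_\sigma)(z)=z$, giving $Z\subseteq(\LvS)^{n_\sigma}$ for every $\sigma$ at once. The most natural way to repair your argument is to invoke exactly this Weyl-lift observation --- at which point the $J$-computations become unnecessary.
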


\begin{proof}

  Part \ref{it:full1} is more or less the definition of
  non-degeneracy. Let now $C\subseteq\Gv$ be the centralizer of
  $\phi(\Gv_\cS)$. Then $C\subseteq \LvS$ by \ref{it:full1}. Let
  $\sigma\in\Sigma$ and let $\tilde s_\sigma$ be any lift of
  $s_\sigma\in W_\cS$ to $\Gv_\cS$. The image
  $\tilde n_\sigma:=\phi(\tilde s_\sigma)$ lies diagonally in the
  product of all subgroups $\Gv(\gv)$ with $\gv\in\sa$. This implies
  that $\tilde n_\sigma$ is in fact also a lift of $n_\sigma$ in
  $\Gv$. Therefore, $\Ad\tilde n_\sigma$ permutes $S\p$ and hence
  normalizes $\LvS$. On the other hand, $\Ad\tilde n_\sigma$
  centralizes $\LaS$. In particular, it centralizes all orbit sums
  $\sum_{\overline\delta\in W_\cS\delta}e_{\overline\d^\vee}$ with
  $\delta\in S\p$. This shows that $\Ad\tilde n_\sigma$ acts in fact
  as a graph automorphism on $\LvS$. Applying this to all
  $\sigma\in\Sigma$ we see that $C\subseteq (\LvS)^{W_\cS}$. Finally,
  $C=\LaS$ follows from the fact that the centralizer of
  $\phi(\Gv_\cS)$ in $\Tv$ is $T^{\Siv}$.
\end{proof}

\section{$L$-groups}\label{sec:L-groups}

If the base field $k$ is not algebraically closed then it is not
really the dual group $\Gv$ itself but its semidirect product
$\tGv=\Gv\rtimes{\rm Gal}(\overline k|k)$ with the Galois group, the
so-called \emph{$L$-group of $G$}, which is of representation
theoretic significance. See, e.g., Borel's introduction \cite{Borel}.

There is evidence that also a spherical variety $X$ should have an
$L$-group $\tGv_X$ attached to it. We won't wager a precise definition
but we are going to give some constraints. In particular, the
existence of equivariant (very) adapted homomorphisms determines the
Galois action on $\Gv_X$ to a large extent.

Let, more generally, $\cS$ be a \wss.. Let moreover $E$ be an abstract
group acting on $\cSq$ and leaving $\cS$ invariant. Clearly, the main
example is furnished by a $G$-variety $X$ which is defined over $k$
and $E$ is the absolute Galois group. Then $E$ acts on the root datum
$\cSq=(\Xiq,S,\Xiqv,\Sv)$ by the so-called $*$-action. Moreover, it
can be shown that then $E$ leaves the \wss. of $X$ invariant
(\cite{KK}*{9.2\,\textit{i)} and paragraphs before 10.5} for details).

We start with a general discussion of $E$-actions on a connected
reductive group $G$ where we assume $E$ to fix $B$ and $T$. Then $E$
will act on the based root system $\cSq=(\Xiq,S,\Xiqv,\Sv)$.
Conversely, assume that an $E$-action on $\cSq$ is given. If
$(e_\a)_{\a\in S}$ is a pinning then this action lifts to a unique
$E$-action on $G$ preserving this pinning, i.e., with
$u(e_\a)=e_{u\a}$ for all $u\in E$. An action of this type will be
called \emph{standard}.

Any two automorphism of $G$ inducing the same automorphism of $\cSq$
differ by an automorphism of the form $\Ad(t)$ where $t$ is a unique
element of the adjoint torus $T\ad:=T/Z(G)$. Thus, isomorphism classes
of $E$-actions on $G$ which are compatible with the $E$-action on
$\cSq$ are parameterized by the cohomology group $H^1(E,T\ad)$. Since
$S$ is a $\ZZ$-basis of $\Xi(T\ad)$, the action of $E$ on $\Xi(T\ad)$
is a permutation representation. Hence
\begin{equation}
  H^1(E,T\ad)=\bigoplus_{\a\in S/E}\Xi(E_\a)
\end{equation}
where $\a\in S$ runs through a set of representatives of the
$E$-orbits. This means that an $E$-action on $G$ is determined by a
system $(\chi_\a)_{\a\in S/E}$ of characters $\chi_\a$ of $E_\a$ in
such a way that
\begin{equation}
  u(e_\a)=\chi_\a(u)e_\a\text{ for $u\in E$ and $\a\in S$ with
    $u\a=\a$.}
\end{equation}
The $E$-action is called \emph{standard in $\a$} if $\chi_\a$ is
trivial. Clearly, the action is standard if and only it is standard in
all $\a$.

The dual group $\Gv$ comes with a pinning which we use to equip it
with a standard $E$-action. Let, moreover, $\cS=(\Xi,\Sigma,S\p)$ be
an $E$-invariant \wss.. Since $E$ stabilizes the associated roots
$\Sa$ as well, the associated group $\Ga_\cS$ is an $E$-stable
subgroup of $\Gv$ and therefore carries an induced $E$-action. This
action is given by a system of characters
$(\chi\ass_\g)_{\gv\in\Sa/E}$ which we are going to determine.

\begin{lemma}

  Let $\gv\in\Sa$ and $u\in E$ with $u\gv=\gv$. Then
  \begin{equation}\label{eq:chiass}
    \chi\ass_\g(u)=
    \begin{cases}
      -1&\text{if $\g\in\Sigma$ is of type $\sA_{2n}\ (n\ge1)$ and
        $\res_{|\g|}u\ne{\rm id}$},\\
      \phantom-1&\text{otherwise.}
    \end{cases}
  \end{equation}

\end{lemma}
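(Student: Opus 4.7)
My plan is to work with the natural pinning on $\Ga_\cS$ inherited from $\Gv$: for each $\gv\in\Sa$, I take the pinning vector $e^\wedge_\gv$ of $\Ga_\cS$ to be the Chevalley root vector $e_\gv\in\fgv_\gv$ (which makes sense since $\Sa\subseteq\Phiv$). Under this identification the induced $E$-action on $\Ga_\cS$ is simply the restriction of the standard $E$-action on $\Gv$, and $\chi^\wedge_\gamma(u)$ is defined by $u(e_\gv)=\chi^\wedge_\gamma(u)\,e_\gv$ for $u\in E$ with $u\gv=\gv$. The claim becomes a sign computation that I will carry out case-by-case along the classification in \cref{tab:NSR}.

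\textbf{Trivial cases.} First I would dispatch every case except an $\sA_n$-type root spherical root. If $\gamma\in S$---which happens when $\sigma$ is of type $\sA_1$, or when $\gv=\gv_i$ arises from a spherical root of type $\sD_2$---then $e_\gv$ is itself a pinning vector of $\Gv$, and standardness of the action immediately gives $\chi^\wedge_\gamma(u)=1$. For $\gamma\in\Sigma\cap\Phi$ of type $\sB_n$, $\sC_n$, $\sF_4$, or $\sG_2$, the Dynkin diagram of $|\gamma|$ admits no nontrivial automorphism, so any $u$ with $u\gamma=\gamma$ must fix every $\alpha\in|\gamma|$ pointwise, hence fixes every Chevalley generator in the support and therefore fixes the iterated bracket $e_\gv$. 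Finally, for a non-root spherical root of type $\sD_{n\ge3}$ or $\sB_3$ with $\gv=\gv_i$, I would verify from \cref{tab:NSR} (handling $\sD_4$-triality by a coefficient inspection of $\sigma$, namely that the unique leaf with coefficient $2$ in $\sigma$ is fixed) that the only nontrivial diagram automorphisms of $|\sigma|$ compatible with the black/white structure and the coefficients of $\sigma$ exchange $\gamma_1$ with $\gamma_2$; since $u\gv_i=\gv_i$ combined with $u\Sa=\Sa$ forces $u$ to fix both $\gv_1$ and $\gv_2$, such a $u$ must act trivially on $|\sigma|$, giving again $\chi^\wedge_\gamma(u)=1$.

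\textbf{The $\sA_n$ case.} The only remaining possibility is $\gamma=\sigma=\alpha_1+\cdots+\alpha_n\in\Sigma\cap\Phi$ of type $\sA_n$ with $n\ge2$ and $u$ acting as the reversal $\alpha_i\mapsto\alpha_{n+1-i}$. Fixing the ``ascending'' convention
\[
\tau_k:=[e_{\av_1},[e_{\av_2},\ldots,[e_{\av_{k-1}},e_{\av_k}]\ldots]],
\]
so that $e_\gv=\tau_n$, I would apply $u$ to obtain the ``descending'' bracket $\rho_n:=[e_{\av_n},[e_{\av_{n-1}},\ldots,[e_{\av_2},e_{\av_1}]\ldots]]$ and then prove by induction on $k$ that $\rho_k=(-1)^{k-1}\tau_k$. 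The inductive step rests on the identity $[e_{\av_k},\tau_{k-1}]=-\tau_k$, which follows from the Jacobi identity combined with the orthogonality $\av_i\perp\av_j$ for $|i-j|\ge2$: one successively commutes $e_{\av_k}$ past the outer factors $e_{\av_1},\ldots,e_{\av_{k-2}}$ (all cross-brackets vanish), and a single antisymmetric swap $[e_{\av_k},e_{\av_{k-1}}]=-[e_{\av_{k-1}},e_{\av_k}]$ supplies the minus sign. Setting $k=n$ yields $u(e_\gv)=(-1)^{n-1}e_\gv$, so $\chi^\wedge_\gamma(u)=(-1)^{n-1}$, which equals $-1$ precisely when $n=2m$ is even, i.e.\ exactly when $|\gamma|$ is of type $\sA_{2m}$, matching \eqref{eq:chiass}. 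The main obstacle is concentrated in this last paragraph; everything else reduces mechanically to enumerating the symmetries of the support compatible with the weak spherical structure.
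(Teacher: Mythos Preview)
Your proof is correct and follows the same overall strategy as the paper: reduce by a case analysis on the type of $\sigma$ to the single situation where $\gamma=\sigma$ is of type $\sA_n$ with $u$ acting as the diagram reversal, and then compute the sign there. The organisation of the trivial cases is slightly different (the paper uniformly argues ``if $\res_{|\gamma|}u=\mathrm{id}$ then done, otherwise $\sigma$ must be $\sA_{n\ge2}$ or $\sD_{n\ge2}$, and $\sD_n$ is ruled out since the swap exchanges $\gamma_1,\gamma_2$''), but the content is the same.

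The one genuine difference is the $\sA_n$ sign computation. The paper passes to the matrix model $\sl(N)$, $N=n+1$, writes the standard outer involution explicitly as $A\mapsto -JA^tJ^{-1}$ with $J=\mathrm{antidiag}(1,-1,1,\ldots)$, and reads off $u(E_{1N})=(-1)^{n+1}E_{1N}$ directly. Your iterated-bracket argument via Jacobi and antisymmetry is coordinate-free and yields the equivalent sign $(-1)^{n-1}$. Both are short; your version has the mild advantage of not needing to check separately that the given matrix formula really is the pinning-preserving automorphism, while the paper's version avoids the inductive bracket manipulation.
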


\begin{proof}

  Assume first that $u$ acts as identity on $|\gamma|$. Then
  $u(e_{\av})=e_{\av}$ for all $\a\in|\g|$. Since $e_{\gv}$ appears in
  the subalgebra of $\fgv$ generated by $(e_{\av})_{\a\in|\g|}$ we
  also have $u(e_{\gv})=e_{\gv}$ and therefore $\chi\ass_\g(u)=1$.

  Now assume $\res_{|\g|}u\ne{\rm id}$. If $\gv\in\sa$ then $u$ fixes
  $\sigma$ and acts non-trivially on $|\sigma|$. Thus $\sigma$ must be
  of type $\sA_n$ with $n\ge2$ or $\sD_n$ with $n\ge2$. The latter
  possibility is excluded, since otherwise $u$ would not fix $\gamma$.

  Thus, $\sigma$ is of type $\sA_n$, $n\ge2$. We settle this case by
  an explicit computation. The support $|\sigma|$ corresponds to a
  subalgebra of $\fgv$ which is isomorphic to $\sl(N)$ with
  $N=n+1$. For $i\ne j$ let $E_{ij}\in\sl(N)$ be the corresponding
  elementary matrix. One checks that the standard action of $u$ on
  $\sl(N)$ is $u(A)=-JA^tJ^{-1}$ where
  $J={\rm antidiag}(1,-1,1,-1,\ldots)$. This implies
  \begin{equation}
    u(E_{ij})=(-1)^{i+j-1}E_{N+1-j,N+1-i}
  \end{equation}
  Now the root space for $\gamma$ is spanned by $E_{1\,N}$. Thus, the
  assertion follows from $u(E_{1\,N})=(-1)^{n+1}E_{1\,N}$.
\end{proof}

\begin{remark}

  The lemma shows that the $E$-action on $\Ga_\cS$ may be
  non-standard. Nevertheless, this phenomenon seems to be quite
  rare. The tables in \cite{BP} show that if $\cS$ is induced by a
  spherical variety $X=G/H$ with $G$ simple and $H$ reductive then up
  to isogeny there are only two series:
  $X=\SL(2n+1)/S(\GL(m)\GL(2n+1-m))$ with $1\le m\le n$ and
  $X=\SL(2n+1)/\G_m\Sp(2n)$ with $n\ge1$.

\end{remark}

Next, we treat the dual group. There is a slight difference in that
$\Gv_\cS$ is defined abstractly by its root system and not as a
subgroup of $\Gv$. So we have to formulate the result a bit
differently:

\begin{lemma}

  Let $E$ act on $\Gv_\cS$ by means of a system of characters
  $(\chiv_\sigma)_{\sigma\in\Sigma/E}$. Then there exists an adapted
  $E$-equivariant homomorphism $\Gv_\cS\to\Gv$ if and only if for all
  $\sigma\in\Sigma$ and $u\in E$ with $u\sigma=\sigma$:
  \begin{equation}\label{eq:charsys}
    \chiv_\sigma(u)=
    \begin{cases}
      -1&\text{if $\sigma\in\Sigma$ is of type $\sA_{2n}\ (n\ge1)$ and
        $\res_{|\sigma|}u\ne{\rm id}$},\\
      \pm1&\text{if $\sigma\in\Sigma$ is of type $\sD_n\ (n\ge2)$ and
        $\res_{|\sigma|}u\ne{\rm id}$},\\
      \phantom-1&\text{otherwise.}
    \end{cases}
  \end{equation}
\end{lemma}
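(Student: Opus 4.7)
The plan is, by \cref{thm:unique}\,\ref{it:unique2}, to reduce the construction of an adapted $E$-equivariant $\phi\colon\Gv_\cS\to\Gv$ to the construction of its restrictions $\phi_\sigma\colon\Gv_{\cS(\sigma)}\to\Gv$ to the rank-one subgroups $\Gv_\cS(\siv)$. Since $E$ permutes $\Sigma$, an $E$-equivariant $\phi$ is the same datum as, for each $E$-orbit on $\Sigma$, a choice of $\phi_\sigma$ for one representative $\sigma$ that is equivariant for the stabilizer $E_\sigma$; the other $\phi_{\sigma'}$ in the orbit are then forced by transport. Each $\phi_\sigma$ is determined by $\phi_\sigma(e_{\siv})=\sum_{\gv\in\sa}c_{\gv}e_{\gv}$ with all $c_{\gv}\ne 0$, and for $u\in E_\sigma$ the equivariance condition becomes
\begin{equation*}
\chiv_\sigma(u)\sum_{\gv\in\sa}c_{\gv}e_{\gv}=\sum_{\gv\in\sa}c_{\gv}\,u(e_{\gv}),
\end{equation*}
where the standard $E$-action on $\Gv$ gives $u(e_{\gv})=\eta_{\gv,u}e_{u\gv}$ with $\eta_{\gv,u}=\pm 1$, and $\eta_{\gv,u}=\chi\ass_\g(u)$ whenever $u$ fixes $\gv$ (by the preceding lemma).

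I would then split the analysis into three cases according to the action of $u$ on $\sa$. \emph{Case A} ($\sa=\{\gv\}$, i.e.\ $\sigma\in\Phi$): $u$ necessarily fixes $\gv$ and the equation reduces to $\chiv_\sigma(u)=\chi\ass_\g(u)$, which by the preceding lemma equals $-1$ iff $\sigma$ is of type $\sA_{2n}$ with $\res_{|\sigma|}u\neq\mathrm{id}$, and $+1$ otherwise; any nonzero $c$ then works. \emph{Case B} ($\sa=\{\gv_1,\gv_2\}$ and $u$ fixes both $\gv_i$): inspection of \cref{tab:assocroots} shows that in each of the types $\sD_n$ ($n\ge 2$) and $\sB_3$, fixing both associated coroots forces $u$ to act as the identity on $|\sigma|$, hence $\chi\ass_{\g_i}(u)=1$ and the condition reduces to $\chiv_\sigma(u)=1$, with no further constraint on $c_1,c_2$. \emph{Case C} ($\sa=\{\gv_1,\gv_2\}$ and $u$ swaps them): another inspection of \cref{tab:assocroots} shows that such a swap is possible exactly when $\sigma$ is of type $\sD_n$ with $n\ge 2$ (the two $\sB_3$-coroots, having different shapes, cannot be interchanged).

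The substance of the proof is Case C. Writing $u(e_{\gv_i})=\eta_i e_{\gv_{3-i}}$, the equivariance equations become $\chiv_\sigma(u)c_1=\eta_2c_2$ and $\chiv_\sigma(u)c_2=\eta_1c_1$, which admit a nonzero solution $(c_1,c_2)$ if and only if $\chiv_\sigma(u)^2=\eta_1\eta_2$. The key computation is $\eta_1=\eta_2=+1$: for $\sigma$ of type $\sD_n$ with $n\ge 3$ the coroots $\gv_1,\gv_2$ differ only in that one contains $\av_{n-1}$ and the other $\av_n$, so writing each $e_{\gv_i}$ as the corresponding iterated bracket of the pinning vectors $e_{\av_1},\ldots,e_{\av_{n-2}},e_{\av_{n-1}}$ (respectively with $\av_n$ in place of $\av_{n-1}$), the standard action of $u$ (which merely exchanges $e_{\av_{n-1}}\leftrightarrow e_{\av_n}$) sends one bracket to the other without any sign; the case $\sD_2$ is immediate since then $\gv_1,\gv_2$ are simple coroots. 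Consequently Case C contributes precisely the condition $\chiv_\sigma(u)\in\{\pm 1\}$, realized by $c_2=\chiv_\sigma(u)c_1$. The three cases together reproduce exactly the list \eqref{eq:charsys}; compatibility of the various $u$-equations on a single orbit is automatic since $\chiv_\sigma$ is a homomorphism, and choices on different orbits are independent because the root spaces $\fgv_\g$ for $\gv\in\Sa$ are linearly independent. The main obstacle is precisely the sign computation in Case C, since a different value of $\eta_1\eta_2$ would allow $\chiv_\sigma(u)\in\{\pm i\}$ and contradict the stated characterization.
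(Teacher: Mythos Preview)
Your argument is correct and follows essentially the same route as the paper's proof: reduce to the rank-one restrictions $\phi_\sigma$ and analyze the $E_\sigma$-equivariance condition on the image vector in $\bigoplus_{\gv\in\sa}\fgv_{\gv}$. The paper streamlines the Case~C sign computation by first invoking the preceding lemma to choose a pinning of $\Ga_\cS$ with $u(e_{\gv})=e_{u\gv}$ for all $\gv$ not of type $\sA_{2n}$, whereas you achieve the same end by writing $e_{\gv_i}$ explicitly as an iterated bracket of simple pinning vectors; both yield $\eta_1\eta_2=1$ and hence $\chiv_\sigma(u)^2=1$. Your explicit invocation of \cref{thm:unique}\,\ref{it:unique2} and the orbit-by-orbit reduction is a bit more detailed than the paper's presentation, but the substance is the same.
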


\begin{proof}

  Because of \eqref{eq:chiass} one can choose a pinning $e_{\gv}$ of
  $\Ga_\cS$ such that $u(e_{\gv})=e_{u\gv}$ whenever $\gamma\in\Sa$ is
  not of type $\sA_{2n\ge2}$. Let $\phi_0:\Gv_\cS\to\Ga_\cS$ be the
  adapted homomorphism which is obtained by folding. This induces a
  pinning $e_\siv$ of $\Gv_\cS$. Observe that $\phi_0$ is
  $E$-equivariant.

  Let $\sigma\in\Sigma$. If $\sigma\in\Phi$ then
  $\phi_0:\fgv_{\siv}\to\fg\ass_{\siv}$ is an isomorphism and we have
  to define $\chiv_\sigma=\chi\ass_\sigma$. Thus assume
  $\sigma=\g_1+\g_2\not\in\Phi$. Then $\phi_0$ maps $\fgv_{\siv}$ into
  $U:=\fg\ass_{\gv_1}\oplus\fg\ass_{\gv_2}$ . If
  $\res_{|\sigma|}u={\rm id}$ then $u$ acts trivially on $U$, forcing
  $\chiv_\sigma(u)=1$. Otherwise, $u$ interchanges the two pinning
  elements $e_{\gv_i}$. Then $U$ contains two $u$-stable
  $1$-dimensional subspaces which are spanned by
  $e_\pm:=e_{\gv_1}\pm e_{\gv_2}$. Since $\phi(\fg_{\siv})$ is one of
  them, we see that $\chiv_{\sigma}=\pm1$. On the other side, clearly
  for any choice of $e_\pm$ there is an $E$-equivariant adapted $\phi$
  with $\phi(e_{\siv})=e_\pm$.
\end{proof}

Next, we study compatibility with the principal $\SL(2)$ of $\LvS$.
The action of $E$ on $\LvS$ is clearly standard. Since the
$W_\cS$-action on $\LvS$ is defined to be standard, we see that $E$
acts on $\LaS$. The fixed point group $L_\cS^{\mathsf L}:=(\LaS)^E$ is
of finite index in the fixed point group $(\LvS)^{\L W_\cS}$ where
$\L W_\cS:=W_\cS\rtimes E$. Note that the principal $\SL(2)$ has
values in $L_\cS^{\mathsf L}$. Recall from \cref{cor:arthur} that for
an adapted $\phi$ to commute with a principal $\SL(2)$ it necessarily
has to be very adapted.

\begin{lemma}

  The following are equivalent:

  \begin{enumerate}

  \item There exists a very adapted $E$-equivariant homomorphism
    $\phi:\Gv_\cS\to\Gv$.

  \item For all $\sigma\in\Sigma$ and $u\in E$ with $u\sigma=\sigma$:
    \begin{equation}\label{eq:charsys2}
      \chiv_\sigma(u)=
      \begin{cases}
        -1&\text{if $\sigma\in\Sigma$ is of type $\sA_{2n}\ (n\ge1)$
          or $\sD_n\ (n\ge3)$ and
          $\res_{|\sigma|}u\ne{\rm id}$},\\
        \pm1&\text{if $\sigma\in\Sigma$ is of type $\sD_2$ and
          $\res_{|\sigma|}u\ne{\rm id}$},\\
        \phantom-1&\text{otherwise.}
      \end{cases}
    \end{equation}
  \end{enumerate}

\end{lemma}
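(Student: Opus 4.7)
The plan is to adapt the proof of the preceding lemma, replacing the folding-induced adapted $\phi_0$ by a \emph{very adapted} one, which exists by \cref{thm:centralizer}. The $E$-action on $\Gv_\cS$ is pulled back from $\Gv$ via $\phi_0$, and $\chiv_\sigma(u)$ is read off from the $u$-action on $\phi_0(e_\siv)$. The crucial point is that the very adapted condition rigidifies the image $\phi_0(e_\siv)$ precisely in the $\sD_{n\ge3}$ case, where the preceding lemma had a $\pm$ ambiguity. For all other $\sigma\in\Sigma$, the preceding analysis applies verbatim: either $|\sigma|\cap S\p=\leer$ (so very adaptedness imposes no extra constraint on $\phi_0(e_\siv)$), or $\sigma$ is supported on $\sB_3$ whose Dynkin diagram admits no non-trivial automorphism, forcing $\res_{|\sigma|}u=\mathrm{id}$ and hence $\chiv_\sigma(u)=1$. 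In these cases \eqref{eq:charsys} and \eqref{eq:charsys2} coincide.

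The new input is for $\sigma$ of type $\sD_n$ with $n\ge3$ and $\res_{|\sigma|}u\ne\mathrm{id}$. From \cref{tab:NSR}, the coefficient structure allows only the swap $\a_{n-1}\leftrightarrow\a_n$, hence $u$ swaps $\dv_1\leftrightarrow\dv_2$ and $\gv_1\leftrightarrow\gv_2$. The very adapted condition forces $\phi_0$ to factor through the $\sA_3$-subgroup $J\subseteq\Gv$ with simple coroots $\dv_1,\bv,\dv_2$ (where $\bv:=\gv_1-\dv_1=\gv_2-\dv_2$), via the explicit $\SL(2)\times\SL(2)\to\SL(4)$ embedding from the proof of \cref{thm:centralizer}. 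Reading $\phi_0(e_\siv)$ in the Chevalley basis of $J$ inherited from that $\SL(4)$ model gives $\phi_0(e_\siv)=e_{\gv_1}+e_{\gv_2}$ (namely $E_{13}+E_{24}$). Since $\bv$ is a sum of $u$-fixed simple coroots $\av_1,\ldots,\av_{n-2}$, one has $u(e_\bv)=e_\bv$; combined with $u(e_{\dv_1})=e_{\dv_2}$ and the Chevalley relations $e_{\gv_1}=[e_{\dv_1},e_\bv]$, $e_{\gv_2}=-[e_{\dv_2},e_\bv]$ dictated by the $\SL(4)$ model, a short computation yields $u(e_{\gv_1}+e_{\gv_2})=-(e_{\gv_1}+e_{\gv_2})$, hence $\chiv_\sigma(u)=-1$, matching \eqref{eq:charsys2}.

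For the converse direction, any character system $(\chiv_\sigma)$ satisfying \eqref{eq:charsys2} is realized by replacing $\phi_0$ with $\Ad(t)\phi_0$ for a suitable $t$ in the subtorus of $\Ta\ad$ preserving the very adapted property (characterized by $\gamma_1(t)=\gamma_2(t)$ for every $\sigma$ of type $\sD_{n\ge3}$ or $\sB_3$); this subtorus acts transitively on admissible character systems, exactly as the ambient $\Ta\ad$ did for adapted homomorphisms in the preceding lemma. The main obstacle in writing this out is the sign bookkeeping in the $\sD_{n\ge3}$ calculation, which I would handle by computing entirely inside the concrete $\SL(4)$ model for $J$ rather than with an abstract Chevalley basis of $\Gv$.
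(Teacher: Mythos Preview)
Your approach and the paper's are essentially the same: both reduce to the preceding lemma for $\sigma\in\Phi$ and $\sigma$ of type $\sD_2$, dispose of $\sB_3$ via the absence of a nontrivial diagram automorphism, and settle the $\sD_{n\ge3}$ case by a computation inside the $\sl(4)$-subalgebra with simple roots $\dv_1,\bv,\dv_2$. The paper phrases this last step slightly differently---rather than fixing the specific very adapted $\phi_0$ from \cref{thm:centralizer}, it notes that \emph{any} very adapted $\phi$ sends $e_{\siv}$ to some $xE_{13}+yE_{24}$, that commutation with $c=e_{\dv_1}+e_{\dv_2}=E_{12}+E_{34}$ forces $x=y$, and that the standard outer involution of $\sl(4)$ then gives $u(e)=-e$. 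Your bracket computation is equivalent.

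There is one misstatement to correct. Your case split ``either $|\sigma|\cap S\p=\leer$, or $\sigma$ is of type $\sB_3$'' does not exhaust the $\sigma$ outside type $\sD_{n\ge3}$: for example $\sigma\in\Phi$ of type $\sA_n$ with $n\ge3$, or of type $\sB_n$, $\sC_n$, $\sF_4$, all have $|\sigma|\cap S\p\ne\leer$. The correct criterion---and the one the paper uses---is that for $\sigma\in\Phi$ or of type $\sD_2$ the associated coroots $\sa$ are orthogonal to $S\p$ (this is \cref{lemma:addclosed2}, since the exceptional clause there requires $\sigma\notin\Phi$). Orthogonality of $\sa$ to $S\p$ is exactly what makes every adapted $\phi_\sigma$ automatically very adapted on that rank-$1$ piece, so the preceding lemma applies with no additional constraint. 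With this replacement your argument goes through unchanged.
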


\begin{proof}

  Compatibility with $\psi$ creates no new constraints for
  $\sigma\in\Sigma\cap\Phi$ or for $\sigma$ of type $\sD_2$ since in
  that case $\sigma\ass$ is orthogonal to $S\p$. So let
  $\sigma=\g_1+\g_2$ be of type $\sD_{n\ge3}$ and $u\in E$ with
  $u\sigma=\sigma$ and $\res_{|\sigma|}\ne{\rm id}$. Consider, as in
  the proof of \cref{thm:centralizer}, the subalgebra of $\fgv$ whose
  simple roots are $\dv_1$, $\bv:=\gv_1-\dv_1=\gv_2-\dv_2$, and
  $\dv_2$. It is isomorphic to $\sl(4)$ with basis vectors
  $E_{ij}$. Since $\fgv_{\bv}$ is contained in the subalgebra spanned
  by $\fgv_{\av_1},\ldots,\fgv_{\av_{n-2}}$, the action of $u$ on
  $\fgv_{\bv}$ is trivial. Moreover, the two pinning vectors
  $e_{\dv_1}$ and $e_{\dv_2}$ are interchanged by $u$. This shows that
  the action of $u$ on $\sl(4)$ is standard. The root space
  $(\fgv_\cS)_{\siv}$ is spanned by a vector of the form
  $e:=xE_{13}+yE_{24}$. If $\phi$ is very adapted then $e$ should
  commute with the $u$-invariant vector
  $c:=e_{\dv_1}+e_{\dv_2}=E_{12}+E_{34}$ which forces $x=y$. But then
  we have $u(e)=-e$ proving $\chiv_\sigma(u)=-1$.
\end{proof}

To determine the ``correct'' character $\chiv_\sigma$ when $\sigma$ is
of type $\sD_2$ (if there is any) one needs input from representation
theory. Yiannis Sakellaridis has informed us that the phenomenon of
non-standard $E$-actions is related to the notion of so called
\emph{unstable base change maps}. This connection can be seen as
follows: The $E$-action on $\Gv_\cS$ is determined by an element
$c\in H^1(E,\Av\ad)$ where $\Av\ad=\Av/Z$ and $Z:=Z(\Gv_\cS)$. Let
$\Gv_\cS\rtimes_cE$ be the corresponding semidirect product. If $c$
can be lifted to $\tilde c\in H^1(E,\Av)$ then
$(g,u)\mapsto(\tilde c(u)g\tilde c(u)^{-1},u)$ defines an isomorphism
\begin{equation}
  \Gv_\cS\rtimes_0E\overset\sim\to\Gv_\cS\rtimes_cE
\end{equation}
where the left hand side denotes the semidirect product with respect
to the standard action. The obstruction for the existence of
$\tilde c$ is the image $c_2$ of $c$ in $H^2(E,Z)$. It can be killed
by extending the group $E$, e.g., by replacing it with the central
extension defined by $c_2$. Another possibility is the Weil group:

\begin{lemma}

  Let $k$ be a $p$-adic field. Then $W_k$, its Weil group, acts on
  $\cS$ via its projection to the Galois group of $k$. Assume that $Z$
  is connected. Then
  \begin{equation}\label{eq:Weil}
    \Gv_\cS\rtimes_0W_k\overset\sim\to\Gv_\cS\rtimes_cW_k.
  \end{equation}

\end{lemma}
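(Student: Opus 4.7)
The plan is to reduce the lemma entirely to a classical cohomological vanishing for the Weil group of a non-archimedean local field. The paragraph preceding the statement has already laid the groundwork: the isomorphism \eqref{eq:Weil} is produced automatically by the conjugation formula $(g,u)\mapsto(\tilde c(u)g\tilde c(u)^{-1},u)$ as soon as the class $c\in H^1(W_k,\Av\ad)$ can be lifted to a class $\tilde c\in H^1(W_k,\Av)$.

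First, I would pinpoint the obstruction. Applying continuous cohomology of $W_k$ to the short exact sequence
\begin{equation*}
1\to Z\to \Av\to \Av\ad\to 1,
\end{equation*}
one sees that $c$ lifts if and only if its image $c_2\in H^2(W_k,Z)$ under the connecting homomorphism vanishes. Thus the entire content of the lemma is the vanishing $H^2(W_k,Z)=0$. Since $Z=Z(\Gv_\cS)$ is connected and diagonalizable over $\CC$, it is a complex algebraic torus, so $Z\cong\G_m^r$ for some $r\ge 0$; by additivity of cohomology in the coefficient group this further reduces to the single statement $H^2(W_k,\CC^\times)=0$.

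Second, I would invoke the classical vanishing $H^2(W_k,\CC^\times)=0$ for the Weil group of a $p$-adic field (see, e.g., Tate's \emph{Number theoretic background} in the Corvallis volume, \S1.2). This is in fact one of the principal reasons for passing from the Galois group to the Weil group in the local Langlands correspondence: the Weil group is cohomologically "free enough" in degree $2$ to kill exactly the sort of obstruction appearing here. Once $c_2=0$ is established, one picks any lift $\tilde c\in H^1(W_k,\Av)$ and applies the conjugation formula recalled above to obtain \eqref{eq:Weil}. The only step with genuine content is the $H^2$-vanishing, and since that is a standard fact rather than new mathematics, I do not anticipate any serious obstacle beyond citing the correct reference; the rest of the argument is formal bookkeeping already assembled in the discussion preceding the lemma.
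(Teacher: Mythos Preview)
Your overall strategy is the same as the paper's: the discussion before the lemma already identifies the obstruction $c_2\in H^2(W_k,Z)$, and the proof consists solely of the vanishing $H^2(W_k,Z)=0$. The paper's proof is a single line citing \cite{Karpuk} for this vanishing.

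There is, however, a small gap in your reduction. You write that $Z\cong\G_m^r$ and then invoke ``additivity of cohomology'' to reduce to $H^2(W_k,\CC^\times)=0$. But the Weil group acts on $Z$ through its action on the \wss. $\cS$; in particular it may permute the spherical roots, so the isomorphism $Z\cong(\CC^\times)^r$ need not be $W_k$-equivariant, and $H^2(W_k,Z)$ does not in general split as $H^2(W_k,\CC^\times)^r$. Tate's result treats only the case of trivial coefficients. The paper sidesteps this by citing Karpuk's paper, which establishes the vanishing of $H^2(W_k,T)$ for an arbitrary complex torus $T$ with continuous $W_k$-action. If you prefer to stay close to Tate's formulation, you would need an intermediate step (for instance, arguing that the cocharacter lattice of $Z$, as a module for the finite image of $W_k$, can be handled via Shapiro's lemma and an induction/restriction argument), but the cleanest fix is simply to cite the result in the generality you actually need.
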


\begin{proof}

  Indeed, $H^2(W_k,Z)=0$ by \cite{Karpuk}.
\end{proof}

Now the unstable base change map is the composition of \ref{eq:Weil}
with an adapted $W_k$-equivariant homomorphism $\phi$. Thus it is a
homomorphism
\begin{equation}
  \Gv_\cS\rtimes_0W_k\to\Gv\rtimes_0W_k.
\end{equation}

The investigation of distinguished representations for
$X=\GL(2,K)/\GL(2,k)$ with $[K:k]=2$ (see, e.g., Flicker
\cite{Flicker}) indicates that the action of $E$ is non-standard in
this case. Here $S=\{\a,\overline\a\}$ and $\Sigma$ contains a single
root $\a+\overline\a$ which is of type $\sD_2$. If one also assumes
that the $E$-action is compatible with localization, then the correct
action of $E$ on $\Gv_\cS$ would be given by
\begin{equation}
  \chiv_\sigma(u)=
  \begin{cases}
    -1&\text{if $\sigma\in\Sigma$ is of type $\sA_{2n}\ (n\ge1)$ or
      $\sD_n\ (n\ge2)$ and
      $\res_{|\sigma|}u\ne{\rm id}$},\\
    \phantom-1&\text{otherwise.}
  \end{cases}
\end{equation}
There is one more piece of evidence for this which is more in line
with our setup. Consider $G=\SO(2n)$ with $n\ge3$ and
$H=\SO(2n-1)\subset G$. Then $X=G/H$ has one spherical root which is
of type $\sD_n$. Now consider the ($n-2$)-nd maximal parabolic
subgroup $P_{n-2}$ of $\SO(2n-1)$. Then one can show that
$Y=G/P_{n-2}$ is spherical with spherical roots
$\sigma_1=\a_1+\ldots+\a_{n-2}$ and $\tau=\a_{n-1}+\a_n$ (see
\cite{Wasserman}). So $\tau$ is of type $\sD_2$. Because there is a
surjective map $Y\to X$, it is expected that $\Gv_X$ is a subgroup
$\Gv_Y$. Now consider the outer automorphism $u$ of $G$. Then the
action of $u$ on $\Gv_X$ is non-standard. An easy calculation shows
that $\Gv_X$ is an $u$-stable subgroup of $\Gv_Y$ only if the
$u$-action on $\Gv_Y$ is non-standard, as well. Thus $\tau$ should be
non-standard for $u$.

\section{Concluding remarks}

In Section \ref{sec:WSS}, we mentioned a couple of production
procedures for \wsss.. Here we discuss the way they affect dual groups
and centralizers. For this, we fix a \wss. $\cS=(\Xi,\Sigma,S\p)$.

$\bullet$ \emph{Change of $\Xi$:} Let $\Xi_0\subseteq\Xi$ be any
sublattice with $\Sigma\subseteq\Xi_0$. Then
$\cS_0:=(\Xi_0,\Sigma,S\p)$ is another \wss.. There is a canonical
homomorphism $\iota:\Gv_{\cS_0}\to\Gv_\cS$ with finite kernel such
that $\phi_0:=\phi\circ\iota$ is (very) adapted for $\cS_0$ if $\phi$
is (very) adapted for $\cS$. Since $\LaS$ does not depend on $\Xi$ we
get the following diagram
\begin{equation}
  \cxymatrix{\Gv_{\cS_0}\ar[d]\ar[r]&\Gv\ar@{=}[d]&\La_{\cS_0}\ar@{_(->}[l]\ar@{=}[d]\\
    \Gv_\cS\ar[r]&\Gv&\LaS\ar@{_(->}[l]}
\end{equation}
Geometrically, this corresponds to an isogeny $X\to X_0$ of
$G$-varieties.

$\bullet$ \emph{Localization in $\Sigma$:} This case has been
partially dealt with in \cref{thm:unique}. Let
$\Sigma_0\subseteq\Sigma$ be a subset. Then
$\cS_0:=(\Xi,\Sigma_0,S\p)$ is a \wss. (a \emph{boundary degeneration}
in the parlance of \cite{SV}). In this case,
$\Gv_{\cS_0}\subseteq\Gv_\cS$ is a Levi subgroup. Moreover, every
adapted $\phi$ restricts to an adapted $\phi_0$. For the centralizers
we have $\La_{\cS_0}\supseteq \LaS$.
\begin{equation}
  \cxymatrix{\Gv_{\cS_0}\ar@{^(->}[d]\ar[r]&\Gv\ar@{=}[d]&\La_{\cS_0}\ar[l]\\
    \Gv_\cS\ar[r]_\phi&\Gv&\LaS\ar[l]\ar@{_(->}[u]}
\end{equation}
Geometrically, this procedure corresponds to replacing a $G$-variety
by one of its ``boundary components'' in a suitable compactification
(see \cite{SV}).

$\bullet$ \emph{Parabolic induction:} Let $S_0\subseteq S$ be a subset
and let $\cS_0=(\Xi,\Sigma,S\p)$ be a \wss. with respect to (the root
subsystem generated by) $S_0$. Then $\cS=\cS_0$ is a \wss. also with
respect to $S$ and is called a \emph{parabolic induction}. Observe
that, conversely, $\cS$ is parabolically induced from $S_0$ if and
only if
\begin{equation}
  S\p\cup\bigcup_{\sigma\in\Sigma}|\sigma|\subseteq S_0.
\end{equation}
The subset $S_0$ corresponds to a Levi subgroup $\Gv_0\subseteq\Gv$
while dual group and centralizer stay the same:
\begin{equation}
  \cxymatrix{
    \Gv_{\cS_0}\ar[r]^\phi\ar@{=}[d]&\Gv_0\ar@{^(->}[d]&\La_{\cS_0}\ar@{=}[d]\ar@{_(->}[l]\\
    \Gv_\cS\ar[r]&\Gv&\LaS\ar@{_(->}[l]}
\end{equation}
Geometrically, the parabolic induction is the variety $G\times^{P^-}Y$
where $P^-=LU^-$ is a parabolic opposite to $B$ with Levi part $L$ and
$Y$ is an $L$-variety.

$\bullet$ \emph{Removal of compact factors:} Let $S_0\p\subseteq S\p$
with $|\sigma|\cap S\p\subseteq S_0\p$ for all $\sigma\in\Sigma$. Then
$\cS_0=(\Xi,\Sigma,S_0\p)$ is a \wss..
\begin{equation}
  \cxymatrix{
    \Gv_{\cS_0}\ar@{=}[d]\ar[r]&\Gv\ar@{=}[d]&\La_{\cS_0}\ar@{_(->}[d]\ar@{_(->}[l]\\
    \Gv_{\cS}\ar[r]&\Gv&\LaS\ar@{_(->}[l] }
\end{equation}
Observe that this process is, as opposed to the previous ones,
\emph{not} compatible with principal $\SL(2)$'s.  Geometrically, this
process leads to a fibration $X\to X_0$ whose fibers are flag
varieties

$\bullet$ \emph{Localization in $S$:} Let $S_0\subseteq S$ be a
subset. Put $\Sigma_0:=\{\sigma\in\Sigma\mid|\sigma|\subseteq S_0\}$
and $S_0\p:=S_0\cap S\p$. Then $\cS_0=(\Xi,\Sigma_0,S_0\p)$ and
$\cS_1=(\Xi,\Sigma_0,S\p)$ are \wsss. with respect to $S_0$ and $S$,
respectively. This process is the concatenation of the previous three
processes. This turns out to be quite neat on the dual group side but
is slightly messy for centralizers:
\begin{equation}
  \cxymatrix{
    \Gv_{\cS_0}\ar@{^(->}[d]\ar[r]&\Gv_0\ar@{^(->}[d]&\La_{\cS_0}\ar@{_(->}[l]\ar@{^(->}[d]\\
    \Gv_{\cS}\ar[r]&\Gv&\La_{\cS_1}\ar@{_(->}[l]&\LaS\ar@{_(->}[l]}
\end{equation}
Geometrically, localization in $S$ corresponds to looking at a certain
open Bia\l ynicki-Birula cell (see e.g. \cite{KnopLocalization}).

We conclude this paper with a remark on integrality. It is well-known
that, due to its combinatorial construction, the Langlands dual group
$\Gv$ is defined and split over $\ZZ$. Similarly, the dual group
$\Gv_\cS$ and the associated group $\Ga_\cS$ are also defined and
split over $\ZZ$. There is a slight difficulty with the centralizer
$\La_\cS$ due to the appearance of $\SO(3)\subseteq\SL(3)$ which
is only well-behaved outside the prime $2$. Then the following is easy
to verify:

\begin{proposition}\label{thm:integral}

  Let $\cS=(\Xi,\Sigma,S\p)$ be a \wss..

  \begin{enumerate}

  \item The associated subgroup $\Ga_\cS\subseteq\Gv$ is defined over
    $\ZZ$.

  \item There exist adapted homomorphisms $\phi:\Gv_\cS\to\Gv$ which
    are defined over $\ZZ$. Moreover, the group $\Ta\ad(\ZZ)$
    ($\cong\{\pm1\}^r$ with $r=|\Sa|$) acts simply transitively on
    these adapted homomorphisms.

  \item The subgroup $\La_\cS\subseteq\Gv$ is defined and smooth
    over $\ZZ[\half]$.

  \item There exist very adapted homomorphisms $\phi:\Gv_\cS\to\Gv$
    which are defined over $\ZZ[\half]$.

  \end{enumerate}

\end{proposition}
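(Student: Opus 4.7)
The plan is to trace through the constructions from the earlier sections and check at each stage that they descend from $\CC$ to $\ZZ$ or $\ZZ[\half]$. For (i), since $\Gv$ is split Chevalley over $\ZZ$ with its fixed pinning, each root subgroup $\Gv_\g$ with $\g\in\Phiv$ is already a $\ZZ$-form of $\G_a$. The subgroup $\Ga_\cS$ of \cref{thm:assexist} is generated by $\Tv$ and the $\Gv_{\pm\g}$ for $\g$ in the additively closed root subsystem of $\Phiv$ spanned by $\Sa$, so it is directly a split reductive $\ZZ$-subgroup scheme of $\Gv$. For (ii), I would revisit the folding proof of \cref{lemma:folded} and check each case of \cref{cor:folding} integrally: in the three graph-automorphism cases one extends $s$ to the $\ZZ$-pinning of $\Ga_\cS$ and takes the identity component of the fixed scheme, which is again split reductive over $\ZZ$; in the single exceptional case $\sG_2\subseteq\SO(7)$ the $7$-dimensional representation preserves a $\ZZ$-valued quadratic form, so that embedding is likewise defined over $\ZZ$. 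For the uniqueness assertion, \cref{thm:unique}\ref{it:unique3} gives a simply transitive $\Ta\ad$-action over $\CC$, and one observes
\begin{equation}
\Ta\ad(\ZZ)=\Hom(\ZZ\Sa,\G_m(\ZZ))=\Hom(\ZZ\Sa,\{\pm1\})\cong\{\pm1\}^{|\Sa|};
\end{equation}
a complex translation $\Ad(t)\circ\phi$ sends one $\ZZ$-adapted $\phi$ to another exactly when every $\gv(t)\in\ZZ^\times=\{\pm1\}$, i.e., when $t\in\Ta\ad(\ZZ)$.

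For (iii), the group $\LaS$ is constructed in \cref{lemma:Lstructure} as the common kernel inside $(\LvS)^{W_\cS}$ of finitely many integral characters, so the question reduces to whether the fixed-point scheme $(\LvS)^{W_\cS}$ is defined and smooth over $\ZZ[\half]$. By \cref{prop:LWList}, up to isogeny the inclusion $(\LvS)^{W_\cS}\subseteq\LvS$ is a product of the six listed types. The cases $H\subseteq H$, $\SL(2)\subseteq\SL(2)^n$, $\SO(2n-1)\subseteq\SO(2n)$ and $\sG_2\subseteq\SO(8)$ are smooth over $\ZZ[\half]$ by standard facts about Chevalley groups together with the smoothness of $\SO$ after inverting $2$; the only case that actually forces inversion of $2$ is $\SO(3)\subseteq\SL(3)$. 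Cutting by the kernel of a character defined over $\ZZ$ preserves smoothness, yielding $\LaS$ as a smooth $\ZZ[\half]$-subgroup scheme.

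For (iv), I would reread the proof of \cref{thm:centralizer} and observe that the two ad hoc embeddings used there, namely $\SL(2)\times\SL(2)\to\SL(4)$ via the tensor product of the defining representations (for $\sigma$ of type $\sD_{n\ge3}$) and $\SL(2)\times\SO(3)\to\Sp(6)$ via the tensor product of the defining representations of $\SL(2)$ and $\SO(3)$ with its integer-coefficient quadratic form (for $\sigma$ of type $\sB_3$), are both constructed with respect to ordered $\ZZ$-bases and thus exist over $\ZZ[\half]$. Assembling the $\phi_\sigma$ globally by \cref{thm:unique}\ref{it:unique2} is then a statement about $\ZZ[\half]$-group schemes once part (ii) is in hand. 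The sharp obstacle in the whole proposition is the single factor $\SO(3)\subseteq\SL(3)$ appearing in \cref{prop:LWList}, which is the only reason parts (iii) and (iv) cannot be upgraded from $\ZZ[\half]$ to $\ZZ$.
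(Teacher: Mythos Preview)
The paper does not actually give a proof of this proposition: the sentence immediately preceding it is ``Then the following is easy to verify:'' and the text moves directly to the tables. Your proposal is a correct and appropriately detailed elaboration of what the authors evidently had in mind, namely to revisit the constructions of \cref{thm:assexist}, \cref{lemma:folded}/\cref{cor:folding}, \cref{prop:LWList}/\cref{lemma:Lstructure}, and \cref{thm:centralizer} and verify that each step makes sense over $\ZZ$ or $\ZZ[\half]$. Your identification of $\SO(3)\subseteq\SL(3)$ as the sole obstruction to working over $\ZZ$ in parts (iii)--(iv) matches the paper's own remark just before the proposition.

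One small imprecision worth tightening in (ii): the claim that the identity component of the fixed scheme of a diagram automorphism ``is again split reductive over~$\ZZ$'' is delicate at the prime~$2$ (fixed-point schemes of order-$2$ automorphisms need not be smooth there). It is cleaner to bypass the fixed scheme entirely and argue directly that the adapted homomorphism $\phi$ is defined over~$\ZZ$: on Lie algebra generators it sends each $e_{\siv}$ to the orbit sum $\sum_{\gv\in\sa}e_{\gv}$ of pinning vectors, which has integer coefficients, and the compatibility with the Chevalley relations (equivalently, the existence of the corresponding group-scheme map) then follows from the standard functoriality of the split pinned construction. This is in fact how the folding homomorphism is built in the proof of \cref{lemma:folded} and how \cref{thm:unique}\ref{it:unique2} patches the $\phi_\sigma$ together, so no new ingredient is needed.
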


\vfill\newpage

\section{Tables}
\label{sec:appendix}

\begin{table}[h]
$
\begin{array}{llll}

  S:=|\sigma|\cup|\tau|&\sigma,\tau&S\setminus S\p&\\

  \noalign{\smallskip\hrule\medskip\text{Case $\sA$.
      For $\sA_3$ see also $\sD_3$.}\hfill\smallskip}

  \sA_n, n\ge l+1\ge2&\a_1\+\a_l,\a_{l+1}\+\a_n&
  \{\a_1,\a_l,\a_{l+1},\a_n\}&\\

  \sA_n, n\ge4&\8\a_1+\a_n/,\a_2\+\a_{n-1}&
  \{\a_1,\a_2,\a_{n-1},\a_n\}&\\

  \sA_5&\8\a_1+2\a_2+\a_3/,\8\a_3+2\a_4+\a_5/&\{\a_2,\a_4\}&\\

  \sA_2+\sA_2&\8\a_1+\a_1'/,\8\a_2+\a_2'/&S&\\

  \noalign{\smallskip\text{Case $\sB$. For $\sB_2$ see also $\sC_2$.}\hfill\smallskip}

  \sB_n, n\ge p+1\ge2&\a_1\+\a_p,\a_{p+1}\+\a_n&
  \{\a_1,\a_p,\a_{p+1},(\a_n)\}&\\

  \sB_3&\a_1+\a_2,\a_2+\a_3&S&\\

  \sB_4&\a_1+\a_2+\a_3+\a_4,\8\a_2+2\a_3+3\a_4/&\{\a_1,\a_4\}&**\\

  \noalign{\smallskip\text{Case $\sC$.}\hfill\smallskip}

  \sC_n,n\ge2&\a_1,\a_1+2\a_2\+2\a_{n-1}+\a_n&\{\a_1,\a_2\}&*\\

  \sC_n,n\ge3&\9\a_1+\a_2,\a_2+2\a_3\+2\a_{n-1}+\a_n&\{\a_1,\a_2,\a_3\}&\\

  \sC_n,n\ge4&\9\8\a_1+2\a_2+\a_3/,\a_3+2\a_4\+2\a_{n-1}+\a_n&\{\a_2,\a_4\}&\\

  \sC_n,n\ge p+2\ge3&\9\a_1\+\a_p,
  \a_{p+1}+2\a_{p+2}\+2\a_{n-1}+\a_n&
  \{\a_1,\a_p,\a_{p+1},\a_{p+2}\}&\\

  \sC_n+\sA_1,n\ge2&\9\8\a_1+\a_1'/,\a_1+2\a_2\+2\a_{n-1}+\a_n&
  \{\a_1,\a_2,\a_1'\}&*\\

  \sC_n,n\ge2&\a_1\+\a_{n-1},\a_n&\{\a_1,\a_{n-1},\a_n\}&\\

  \sC_n,n\ge3&\8\a_1+\a_n/,\a_2\+\a_{n-1}&\{\a_1,\a_2,\a_{n-1},\a_n\}&\\

  \sC_2+\sC_2&\8\a_1+\a_1'/,\8\a_2+\a_2'/&S&\\

  \noalign{\smallskip\text{Case $\sD$. For $\sD_3$ see also $\sA_3$.}\hfill\smallskip}

  \sD_n,n\ge p+3\ge4\quad&\9\a_1\+\a_p,\82\a_{p+1}\+2\a_{n-2}+\a_{n-1}+\a_n/&
  \{\a_1,\a_p,\a_{p+1}\}&\\

  \sD_n,n\ge3&\a_1\+\a_{n-2},\8\a_{n-1}+\a_n/&\{\a_1,\a_{n-2},\a_{n-1},\a_n\}&\\

  \sD_5&\8\a_1+2\a_2+\a_3/,\a_3+\a_4+\a_5&\{\a_2,\a_4,\a_5\}&\\

  \sD_n,n\ge3&\9\a_1\+\a_{n-2}+\a_{n-1},\a_1\+\a_{n-2}+\a_n&
  \{\a_1,\a_{n-1},\a_n\}&\\

  \noalign{\smallskip\text{Case $\sE\sF\sG$.}\hfill\smallskip}

  \sE_6&\9\82\a_1+2\a_3+2\a_4+\a_2+\a_5/,\82\a_6+2\a_5+2\a_4+\a_3+\a_2/&
  \{\a_1,\a_6\}&\\

  \sE_6&\9\a_1+\a_3+\a_4+\a_5+\a_6,\82\a_2+2\a_4+\a_3+\a_5/&
  \{\a_1,\a_2,\a_6\}&\\

  \sF_4&\a_1+\a_2,\a_3+\a_4&S&\\

  \sF_4&\a_1+\a_2+\a_3,\a_4&\{\a_1,\a_3,\a_4\}&\\

  \sF_4&\a_1+\a_2+\a_3,\a_4+2\a_3+\a_2&\{\a_1,\a_3,\a_4\}&\\

  \sF_4&\8\a_1+\a_4/,\a_2+\a_3&S&\\

  \sF_4&\8\a_1+2\a_2+3\a_3/,\a_4&\{\a_3,\a_4\}&\\

  \sG_2&\a_1,\a_2&S&\\

  \sG_2&\a_1,\a_1+\a_2&S&*\\

  \sG_2+\sG_2&\8\a_1+\a_1'/,\8\a_2+\a_2'/&S&\\

\end{array}
$
\medskip
\caption{\Wsss. of rank 2}\label{tab:rank2}
\end{table}

\begin{landscape}

\addtolength{\topmargin}{10mm}
\addtolength{\evensidemargin}{-7mm}

\begin{table}[h]
$\begin{array}{|ll|l|ll|ll|l|}

    \noalign{\hrule}

    \fg&\fh&\fgv&\fg\ass_X&\fgv_X&\fl_X^\vee&\fl\ass_X&\\

    \noalign{\hrule}

    \sl(n)&\fs(\gl(m){+}\gl(n{-}m))&\sl(n)&\fs(\gl(2m)+\ft^{n-2m})&\sp(2m)&\fs(\gl(n{-}2m){+}\ft^{2m})&\gl(n-2m)&n\ge 2m\ge0\\

    \sl(n)&\so(n)&\sl(n)&\sl(n)&\sl(n)&\ft^{n-1}&0&n\ge1\\

    \sl(2n)&\sp(2n)&\sl(2n)&\fs(\gl(n)+\gl(n))&\sl(n)&\fs(\gl(2)^n)&\sl(2)&n\ge1\\

    \sl(2n+1)&\sp(2n)+\ft^1&\sl(2n+1)&\fs(\gl(n{+}1){+}\gl(n))&\sl(n+1)+\sl(n)&\ft^{2n}&\ft^1&n\ge1\\

    \sl(2n+1)&\sp(2n)&\sl(2n+1)&\fs(\gl(n{+}1){+}\gl(n))&\fs(\gl(n{+}1){+}\gl(n))&\ft^{2n}&\ft^1&n\ge1\\

    \noalign{\hrule}

    \so(2n+1)&\so(m){+}\so(2n{+}1{-}m)&\sp(2n)&\sp(2m)+\ft^{n-m}&\sp(2m)&\sp(2n-2m)+\ft^m&\sp(2n-2m)&n\ge m\ge0\\

    \so(4n+1)&\gl(2n)&\sp(4n)&\sp(2n)+\sp(2n)&\sp(2n)+\sp(2n)&\ft^{2n}&0&n\ge0\\

    \so(4n+3)&\gl(2n+1)&\sp(4n+2)&\sp(2n{+}2){+}\sp(2n)&\sp(2n{+}2){+}\sp(2n)&\ft^{2n+1}&0&n\ge0\\

    \so(7)&\sG_2&\sp(6)&\sl(2)^2+\ft^1&\sl(2)&\gl(3)&\so(3)&\\

    \so(9)&\mathfrak{spin}(7)&\sp(8)&\sl(2)^3+\ft^1&\sl(2)^2&\gl(3)+\ft^1&\so(3)&\\

    \noalign{\hrule}

    \sp(2n)&\sp(2m){+}\sp(2n{-}2m)&\so(2n+1)&\gl(2m)+\ft^{n-2m}&\sp(2m)&\gl(2)^m{+}\so(2n{-}4m{+}1)&\sl(2){+}\so(2n{-}4m{+}1)&n\ge 2m\ge2\\

    \sp(2n)&\gl(n)&\so(2n+1)&\so(2n+1)&\so(2n+1)&\ft^n&0&n\ge0\\

    \sp(2n)&\sp(2n-2)+\ft^1&\so(2n+1)&\so(4)+\ft^{n-2}&\so(4)&\so(2n-3)+\ft^2&\so(2n-3)&n\ge2\\

    \noalign{\hrule}

    \so(2n)&\so(m)+\so(2n-m)&\so(2n)&\so(2m{+}2){+}\ft^{n{-}m{-}1}&\so(2m+1)&\so(2n-2m)+\ft^m&\so(2n-2m-1)&n>m\ge1\\

    \so(2n)&\so(n)+\so(n)&\so(2n)&\so(2n)&\so(2n)&\ft^n&0&n\ge2\\

    \so(4n)&\gl(2n)&\so(4n)&\gl(2n)&\sp(2n)&\gl(2)^n&\sl(2)&n\ge1\\

    \so(4n+2)&\gl(2n+1)&\so(4n+2)&\gl(2n)+\so(2)&\sp(2n)&\gl(2)^n+\so(2)&\sl(2)+\so(2)&n\ge1\\

    \so(8)&\sG_2&\so(8)&\gl(2)+\so(4)&\sl(2)+\so(4)&\sl(2)+\ft^3&\sl(2)&\\

    \so(10)&\mathfrak{spin}(7)+\ft^1&\so(10)&\gl(2)+\so(6)&\sl(2)+\so(6)&\sl(2)+\ft^4&\sl(2)&\\

    \noalign{\hrule}

    \sE_6&\sF_4&\sE_6&\gl(3)+\gl(3)&\sl(3)&\so(8)+\ft^2&\sG_2&\\

    \sE_6&\so(10)+\ft^1&\sE_6&\sl(4)+\ft^3&\sp(4)&\sl(4)+\ft^3&\sp(4)+\ft^1&\\

    \sE_6&\so(10)&\sE_6&\sl(4)+\ft^3&\sp(4)+\ft^1&\sl(4)+\ft^3&\sp(4)+\ft^1&\\

    \sE_6&\sl(6)+\sl(2)&\sE_6&\sE_6&\sF_4&\ft^6&0&\\

    \sE_6&\sp(8)&\sE_6&\sE_6&\sE_6&\ft^6&0&\\

    \sE_7&\sE_6+\ft^1&\sE_7&\sl(6)+\ft^2&\sp(6)&\so(8)+\ft^3&\sG_2&\\

    \sE_7&\so(12)+\sl(2)&\sE_7&\sE_6+\ft^1&\sF_4&\sl(2)^3+\ft^4&\sl(2)&\\

    \sE_7&\sl(8)&\sE_7&\sE_7&\sE_7&\ft^7&0&\\

    \sE_8&\sE_7+\sl(2)&\sE_8&\sE_6+\ft^2&\sF_4&\so(8)+\ft^4&\sG_2&\\

    \sE_8&\so(16)&\sE_8&\sE_8&\sE_8&\ft^8&0&\\

    \sF_4&\so(9)&\sF_4&\sl(2)+\ft^3&\sl(2)&\sp(6)+\ft^1&\sp(6)&\\

    \sF_4&\sp(6)+\sl(2)&\sF_4&\sF_4&\sF_4&\ft^4&0&\\

    \sG_2&\sl(3)&\sG_2&\gl(2)&\sl(2)&\gl(2)&\sl(2)&\\

    \sG_2&\sl(2)+\sl(2)&\sG_2&\sG_2&\sG_2&\ft^2&0&\\

    \noalign{\hrule}

  \end{array}$
\medskip
\caption{Lie algebras of dual groups for $X=G/H$ spherical, $G$ simple, $H$ reductive (see \cites{Kraemer,BP}).}
\label{tab:Kraemer}
\end{table}
\end{landscape}

\newpage

\begin{bibdiv}
  \begin{biblist}

\bib{Akhiezer}{article}{
  author={Ahiezer, Dmitry},
  title={Equivariant completions of homogeneous algebraic varieties by homogeneous divisors},
  journal={Ann. Global Anal. Geom.},
  volume={1},
  date={1983},
  pages={49--78},
}

\bib{Bou}{book}{
  author={Bourbaki, N.},
  title={Éléments de mathématique. Fasc. XXXIV. Groupes et algèbres de Lie. Chapitre IV: Groupes de Coxeter et systèmes de Tits. Chapitre V: Groupes engendrés par des réflexions. Chapitre VI: systèmes de racines},
  series={Actualités Scientifiques et Industrielles, No. 1337},
  publisher={Hermann, Paris},
  date={1968},
  pages={288 pp. (loose errata)},
}

\bib{Borel}{article}{
  author={Borel, Armand},
  title={Automorphic $L$-functions},
  conference={ title={Automorphic forms, representations and $L$-functions}, address={Proc. Sympos. Pure Math., Oregon State Univ., Corvallis, Ore.}, date={1977}, },
  book={ series={Proc. Sympos. Pure Math., XXXIII}, publisher={Amer. Math. Soc., Providence, R.I.}, },
  date={1979},
  pages={27--61},
}

\bib{BdS}{article}{
  author={Borel, Armand},
  author={De Siebenthal, Jean},
  title={Les sous-groupes fermés de rang maximum des groupes de Lie clos},
  journal={Comment. Math. Helv.},
  volume={23},
  date={1949},
  pages={200--221},
}

\bib{Bravi}{article}{
  author={Bravi, Paolo},
  title={Primitive spherical systems},
  journal={Trans. Amer. Math. Soc.},
  volume={365},
  date={2013},
  pages={361--407},
  arxiv={0909.3765},
}

\bib{BP}{article}{
  author={Bravi, P.},
  author={Pezzini, G.},
  title={The spherical systems of the wonderful reductive subgroups},
  journal={J. Lie Theory},
  volume={25},
  date={2015},
  pages={105--123},
  arxiv={1109.6777},
 }

\bib{BraviPezzini}{article}{
  author={Bravi, Paolo},
  author={Pezzini, Guido},
  title={Primitive wonderful varieties},
  journal={Math. Z.},
  volume={282},
  date={2016},
  pages={1067--1096},
  arxiv={1106.3187},
}

\bib{Brion}{article}{
  author={Brion, Michel},
  title={Vers une généralisation des espaces symétriques},
  journal={J. Algebra},
  volume={134},
  date={1990},
  pages={115--143},
}

\bib{BrionPauer}{article}{
  author={Brion, Michel},
  author={Pauer, Franz},
  title={Valuations des espaces homogènes sphériques},
  journal={Comment. Math. Helv.},
  volume={62},
  date={1987},
  pages={265--285},
}

\bib{Flicker}{article}{
  author={Flicker, Yuval Z.},
  title={On distinguished representations},
  journal={J. Reine Angew. Math.},
  volume={418},
  date={1991},
  pages={139--172},
}

\bib{GaitsgoryNadler}{article}{
  author={Gaitsgory, Dennis},
  author={Nadler, David},
  title={Spherical varieties and Langlands duality},
  journal={Mosc. Math. J.},
  volume={10},
  date={2010},
  pages={65--137, 271},
  arxiv={math/0611323},
}

\bib{Kac}{book}{
  author={Kac, Victor G.},
  title={Infinite-dimensional Lie algebras},
  edition={3},
  publisher={Cambridge University Press, Cambridge},
  date={1990},
  pages={xxii+400},
}

\bib{Karpuk}{article}{
  author={Karpuk, David A.},
  title={Cohomology of the Weil group of a $p$-adic field},
  journal={J. Number Theory},
  volume={133},
  date={2013},
  pages={1270--1288},
}

\bib{KnopIB}{article}{
  author={Knop, Friedrich},
  title={Über Bewertungen, welche unter einer reduktiven Gruppe invariant sind},
  journal={Math. Ann.},
  volume={295},
  date={1993},
  pages={333--363},
}

\bib{KnopAB}{article}{
  author={Knop, Friedrich},
  title={The asymptotic behavior of invariant collective motion},
  journal={Invent. Math.},
  volume={116},
  date={1994},
  pages={309--328},
}

\bib{KnopHC}{article}{
  author={Knop, Friedrich},
  title={A Harish-Chandra homomorphism for reductive group actions},
  journal={Ann. of Math. (2)},
  volume={140},
  date={1994},
  pages={253--288},
}

\bib{KnopAuto}{article}{
  author={Knop, Friedrich},
  title={Automorphisms, root systems, and compactifications of homogeneous varieties},
  journal={J. Amer. Math. Soc.},
  volume={9},
  date={1996},
  pages={153--174},
}

\bib{KnopLocalization}{article}{
  author={Knop, Friedrich},
  title={Localization of spherical varieties},
  journal={Algebra Number Theory},
  volume={8},
  date={2014},
  pages={703--728},
  arxiv={1303.2561},
}

\bib{KnopSRSV}{article}{
  author={Knop, Friedrich},
  title={Spherical roots of spherical varieties},
  journal={Ann. Inst. Fourier (Grenoble)},
  volume={64},
  date={2014},
  pages={2503--2526},
  arxiv={1303.2466},
}

\bib{KK}{article}{
  author={Knop, Friedrich},
  author={Krötz, Bernhard},
  title={Reductive group actions},
  journal={Preprint},
  date={2016},
  pages={62 pp.},
  arxiv={1604.01005},
}

\bib{Kraemer}{article}{
  author={Krämer, Manfred},
  title={Sphärische Untergruppen in kompakten zusammenhängenden Liegruppen},
  journal={Compositio Math.},
  volume={38},
  date={1979},
  pages={129--153},
}

\bib{Luna}{article}{
  author={Luna, Domingo},
  title={Variétés sphériques de type $A$},
  journal={Publ. Math. Inst. Hautes Études Sci.},
  number={94},
  date={2001},
  pages={161--226},
}

\bib{Res}{article}{
  author={Ressayre, Nicolas},
  title={Spherical homogeneous spaces of minimal rank},
  journal={Adv. Math.},
  volume={224},
  date={2010},
  pages={1784--1800},
  arxiv={0909.0653},
}

\bib{SV}{article}{
  author={Sakellaridis, Yiannis},
  author={Venkatesh, Akshay},
  title={Periods and harmonic analysis on spherical varieties},
  date={2012},
  pages={291p.},
  arxiv={1203.0039},
}

\bib{Schalke}{thesis}{
  author={Schalke, Barbara},
  title={Beiträge zur Theorie sphärischer Varietäten},
  subtitle={Die duale Gruppe und Betrachtungen in positiver Charakteristik},
  type={Dissertation},
  organization={FAU Erlangen-Nürnberg},
  language={english},
  date={2016},
  pages={94pp.},
}

\bib{Springer}{book}{
  author={Springer, Tonny A.},
  title={Linear algebraic groups},
  series={Progress in Mathematics},
  volume={9},
  edition={2},
  publisher={Birkhäuser Boston, Inc., Boston, MA},
  date={1998},
  pages={xiv+334},
}

\bib{Wasserman}{article}{
  author={Wasserman, Benjamin},
  title={Wonderful varieties of rank two},
  journal={Transform. Groups},
  volume={1},
  date={1996},
  pages={375--403},
}


  \end{biblist}
\end{bibdiv}

\end{document}